\numberwithin{equation}{section}
\numberwithin{figure}{section}
\theoremstyle{plain}
\newtheorem{thm}{\protect\theoremname}
\theoremstyle{plain}
\theoremstyle{plain}
\newtheorem{lem}[thm]{\protect\lemmaname}
\theoremstyle{remark}
\newtheorem{rem}[thm]{\protect\remarkname}
\theoremstyle{plain}
\newtheorem{prop}[thm]{\protect\propositionname}
\theoremstyle{remark}
\theoremstyle{definition}
\theoremstyle{plain}
\newtheorem{cor}[thm]{Corollary}
\date{}
\setlist[itemize]{noitemsep,topsep=5pt}
\titleformat{\section}{\large\bfseries\filleft}{\thesection}{1em}{}[{\titlerule[0.8pt]}]
\newcommand{\btimes}{\mathbin{\rotatebox[origin=c]{36}{$\pentagram$}}}
\newcommand\bleh{%
  \mathrel{\ooalign{\hss$\btimes$\hss\cr%
  \kern0.025ex\raise-0.88ex\hbox{\scalebox{2.5}
  {$\circ$}}}}}
\renewcommand\labelenumi{(\roman{enumi})}
\renewcommand\theenumi\labelenumi
\DeclareMathOperator{\Spec}{Spec}
\DeclareMathOperator{\Pic}{Pic}
\DeclareMathOperator{\Mor}{Mor}
\DeclareMathOperator{\PGL}{PGL}
\DeclareMathOperator{\Gr}{Gr}
\let\oldtheorem\thm
\renewcommand{\thm}{\oldtheorem\normalfont}
\let\oldprop\prop
\renewcommand{\prop}{\oldprop\normalfont}
\let\oldcor\cor
\renewcommand{\cor}{\oldcor\normalfont}
\let\oldlem\lem
\renewcommand{\lem}{\oldlem\normalfont}
\newenvironment{ack}{\textit{Acknowledgements.}}{}
\providecommand{\claimname}{Claim}
\providecommand{\definitionname}{Definition}
\providecommand{\lemmaname}{Lemma}
\providecommand{\propositionname}{Proposition}
\providecommand{\questionname}{Question}
\providecommand{\remarkname}{Remark}
\providecommand{\theoremname}{Theorem}
\begin{document}
\global\long\def\A{\mathbb{A}}%

\global\long\def\C{\mathbb{C}}%

\global\long\def\E{\mathbb{E}}%

\global\long\def\F{\mathbb{F}}%

\global\long\def\G{\mathbb{G}}%

\global\long\def\H{\mathbb{H}}%

\global\long\def\N{\mathbb{N}}%

\global\long\def\P{\mathbb{P}}%

\global\long\def\Q{\mathbb{Q}}%

\global\long\def\R{\mathbb{R}}%

\global\long\def\O{\mathcal{O}}%

\global\long\def\Z{\mathbb{Z}}%

\global\long\def\ep{\varepsilon}%

\global\long\def\laurent#1{(\!(#1)\!)}%

\global\long\def\wangle#1{\left\langle #1\right\rangle }%

\global\long\def\ol#1{\overline{#1}}%

\global\long\def\mf#1{\mathfrak{#1}}%

\global\long\def\mc#1{\mathcal{#1}}%

\global\long\def\norm#1{\left\Vert #1\right\Vert }%

\global\long\def\et{\textup{ét}}%

\global\long\def\Et{\textup{Ét}}%

\title{Terminal singularities of the moduli space of curves on low degree hypersurfaces and the circle method}
\author{Jakob Glas and Matthew Hase-Liu}
\maketitle
\begin{abstract}
% Given an arbitrary smooth hypersurface $X$ of degree $d$ in $\P^n$, we study the singularities of the moduli space of degree $e$ maps from smooth genus $g$ curves to $X$. In particular, for $n$ and $e$ large compared to $d$ and $g$, respectively, we show that these moduli spaces have at worst terminal singularities. Our main approach is to study the jet schemes of these moduli spaces by developing a suitable form of the circle method. 
We study the singularities of the moduli space of degree $e$ maps from smooth genus $g$ curves to an arbitrary smooth hypersurface of low degree. For $e$ large compared to $g$, we show that these moduli spaces have at worst terminal singularities. Our main approach is to study the jet schemes of these moduli spaces by developing a suitable form of the circle method. 
\end{abstract}
\tableofcontents{}

\section{Introduction}
The interplay between geometry and number theory has had a long and rich history. Examples like Deligne's resolution of the Weil conjectures \cite{deligne1974conjecture, deligne1980conjecture}, which serves as a powerful tool for estimating exponential sums and makes crucial use of the heavy machinery of algebraic geometry, shows the impact of geometry on number theory should not be underestimated. 

Occasionally, this flow of information can be reversed and tools from analytic number theory can be used to investigate objects of geometric interest. For instance, building on ideas of Ellenberg and Venkatesh, Browning and Vishe~\cite{timpankaj} used the circle method over function fields to deduce crude geometric properties of the moduli space $\mathcal{M}_{0,0}(X,e)$ of rational curves of degree $e$ on a smooth hypersurface $X$ of low degree.

Our aim in this paper is to enrich this flow of information by developing a suitable form of the circle method to show the  moduli space $\mathcal{M}_{g,0}(X,e)$ of smooth projective genus $g$ curves of degree $e$ on $X\subset \P^n_{\C}$ has at worst terminal singularities for $n$ large compared to $d$ and $e$ large compared to $d$ and $g$.

Let 
\begin{equation}\label{Eq: ExpectedDim}
    \bar{\mu}=\underset{\dim H^0(\deg e \text{ line bundle on genus }g\text{ curve})^{n+1}}{\underbrace{\left(n+1\right)\left(e-g+1\right)}}-\underset{\dim H^0(\deg de \text{ line bundle})}{\underbrace{\left(de-g+1\right)}}+\underset{\dim\Pic^{e}\left(\text{genus }g\text{ curve}\right)}{\underbrace{g}}-\underset{\text{scaling}}{\underbrace{1}}+\underset{\dim\mathcal{M}_{g}}{\underbrace{\left(3g-3\right)}}.
\end{equation}
A naive heuristic based on Riemann--Roch and deformation theory suggests that $\dim \mathcal{M}_{g,0}(X,e)=\bar{\mu}$. For $g=0$, $n\geq d+2$, and $X$ \emph{general} in moduli, Riedl and Yang \cite{RiedlYang} have shown this is indeed the case, in addition to establishing irreducibility of $\mathcal{M}_{0,0}(X,e)$; in fact, they even demonstrate these results for the Kontsevich compactification $\ol{\mathcal{M}}_{0,0}(X,e)$. 

Methods based on analytic number theory from Browning and Vishe \cite{timpankaj} allow one to deal with \emph{any} smooth hypersurface at the cost of requiring $n\ge(5d-4)2^{d-1}$, which was later refined to $n\ge (2d-1)2^{d-1}$ by Browning and Sawin \cite{BrowningSawinFree}. Bilu, Browning, and Sawin \cite{cohomology_circle, bilu2023motivic} have also developed cohomological and motivic analogs of this strategy. For the higher genus setting, the second author \cite{haseliu2024higher} extended the above results on irreducibility and expected dimension to $\mathcal{M}_{g,0}(X,e)$ by re-interpreting the analytic strategy geometrically. Recent work of Sawin~\cite{sawin2024asymptoticwaringsproblemfunction} also addresses the case of $X$ a Fermat hypersurface, in which case the lower bound on $n$ can be improved to depend only linearly on $d$.

Perhaps the next natural geometric question is to ask how singular $\mathcal{M}_{g,0}(X,e)$ is. For $d=1,2$, smoothness of $\mathcal{M}_{0,0}(X,e)$ follows from work of Kontsevich~\cite{kontsevich}, where he works in the more general context of \emph{convex} varieties $X$. For higher genus, the second author and Amal Mattoo \cite{non-smooth} proved that $\mathcal{M}_{g,0}(X,e)$ is essentially never smooth for any smooth Fano hypersurface $X$ of degree at least 2.

For $g=0$, Starr \cite{starr2003} investigated the singularities and Kodaira dimension of $\ol{\mathcal{M}}_{0,0}(X,e)$, proving that for $e\geq 2$ and $d\geq 3$, the spaces $\ol{\mathcal{M}}_{0,0}(X,e)$, as well as their coarse spaces (by inversion--of--adjunction) have at worst canonical singularities if $n \ge d + e$ and $X$ is \emph{general}. He also computed the canonical divisor of $\ol{\mathcal{M}}_{0,0}(X,e)$, proving that it is big in many cases. Essentially nothing else seems to be known about the quality of the singularities of $\mathcal{M}_{0,0}(X,e)$, certainly much less those of $\mathcal{M}_{g,0}(X,e)$ when $g\geq 1$.

Our main result is that for $n$ large compared to $d$, the singularities of the moduli spaces $\mathcal{M}_{g,0}(X,e)$ are as mild as possible, at least from the perspective of the minimal model program. For an integer $g\geq 0$, define 
\[
f(g)=\begin{cases}
    0 &\text{if }g\in\{0,1\},\\
    (g+1)/2 &\text{if }g\geq 2.
\end{cases}
\]

\begin{thm}\label{Thm: Main.TermSing}
    Let $X\subset \P^n_{\C}$ be a smooth hypersurface of degree $d\geq 2$ and $g\geq 0$ be an integer. Then $\mathcal{M}_{g,0}(X,e)$ has at worst terminal singularities when 
    \[
    n+1>\begin{cases}
        2^{d-2}d(2d+1) & \text{if }g=0, e=1 \text{, and }d\geq 4,\\
        2^{d-1}(d-1)\left(1+\frac{d-1}{2d-1}+\frac{2(d-1)(de+1)}{d}\right) &\text{if }g=0, 2\leq e\leq d-1 \text{, and } d\ge 3,\\
        2^{d-2}(d-1)(4d^2-4d+3) -1&\text{if }g=0, d \le e \text{, and }d\geq 3,\\
        2^{d-2}(d-1)(4d^2-4d+3) &\text{if }g\geq 1\text{ and }d\geq 2,
    \end{cases}
    \]
    provided that $e>e_0$, whose value is given in Table~\ref{Table: TermSing}. 
\end{thm}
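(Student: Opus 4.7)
The plan is to apply a jet-scheme criterion for terminal singularities. By work of Musta\c{t}\u{a} and Ein--Musta\c{t}\u{a}, if $Y$ is a locally complete intersection of dimension $N$ with smooth locus $Y_{\mathrm{sm}}$, then $Y$ has terminal singularities if and only if for every $m\ge 1$ the jet scheme $\mathcal{J}_m Y$ is irreducible of the expected dimension $(m+1)N$ and moreover the preimage $\pi_m^{-1}(Y\setminus Y_{\mathrm{sm}})$ in $\mathcal{J}_m Y$ has codimension strictly greater than one. The proof thus reduces to two tasks: first, realizing $\mathcal{M}_{g,0}(X,e)$ as an lci of expected dimension $\bar{\mu}$; second, controlling the jet schemes $\mathcal{J}_m\mathcal{M}_{g,0}(X,e)$ for every $m\ge 1$. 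The first is handled by fibering $\mathcal{M}_{g,0}(X,e)$ over $\mathcal{M}_g$ and the relative Picard scheme in degree $e$: over a geometric point $(C,L)$ the fiber is cut out inside $(H^0(C,L)^{n+1}\setminus\text{base locus})/\G_m$ by the $de-g+1$ scalar components of $F(s_0,\dots,s_n)\in H^0(C,L^{\otimes d})$, and the dimension count matches $\bar{\mu}$ as in \eqref{Eq: ExpectedDim}.

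The main work is the second task, which is attacked by a form of the circle method. An $m$-jet of $\mathcal{M}_{g,0}(X,e)$ at a stable map $(C,f\colon C\to X)$ unwinds moduli-theoretically as a family $(\mathcal{C},\mathcal{L},s_0,\dots,s_n)$ over $\Spec \C[t]/t^{m+1}$ of a smooth curve, a line bundle of degree $e$, and $n+1$ sections, subject to $F(s_0,\dots,s_n)=0$. Spreading out to a finite field and invoking Lang--Weil, both the irreducibility and the dimension of $\mathcal{J}_m\mathcal{M}_{g,0}(X,e)$ can be read off from an asymptotic count of $\F_q$-points. To produce such a count I would develop a function-field circle method in the spirit of Browning--Vishe~\cite{timpankaj} and Browning--Sawin~\cite{BrowningSawinFree}, but with the underlying variables now being $m$-jets of sections on the thickened curve $\mathcal{C}/\Spec \F_q[t]/t^{m+1}$ rather than ordinary sections. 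The major arcs should contribute the expected main term of order $q^{(m+1)\bar{\mu}}$ uniformly in $m$, while the minor arcs are treated by a Weyl/van der Corput differencing in Birch's style, adapted to the jet setting; the resulting saving, measuring $n$ against a polynomial in $d$, is exactly what is encoded in the numerical thresholds in the statement.

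The main obstacle will be calibrating the circle method to deliver the strict codimension bound on jets above the singular locus, which is what separates terminal from merely canonical singularities. A generic minor-arc estimate only sees the smooth stratum of $\mathcal{M}_{g,0}(X,e)$; for the terminal conclusion one must carry out a stratified analysis that produces additional savings along degeneration loci of $(s_0,\dots,s_n)$ over which $\mathcal{M}_{g,0}(X,e)$ is singular, and this is precisely where the factor $2^{d-1}$ together with the quantity $4d^2-4d+3$ in the threshold should originate. A secondary difficulty is uniformity over $\mathcal{M}_g$: the linear system $H^0(C,L)$ varies with the base point, so the major- and minor-arc bounds must be made uniform in the family of curves, which is why the genus-dependent correction $f(g)$ and the auxiliary lower bound $e>e_0$ enter, guaranteeing that the relevant line bundles are sufficiently positive and that higher cohomology vanishes. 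Combining these inputs with the Ein--Musta\c{t}\u{a}-type criterion yields the terminal singularity conclusion in the stated ranges of $n$ and $e$.
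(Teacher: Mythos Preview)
Your overall architecture is correct: reduce to the fibers $\Mor(C,X,L)$, invoke a jet-scheme criterion of Ein--Musta\c{t}\u{a} type, spread out, and point-count via a circle method adapted to $\F_q[t]/t^{m+1}$. The reformulation of terminality as a codimension-two condition on $\pi_m^{-1}(Y\setminus Y_{\mathrm{sm}})$ is also valid. However, there is a genuine gap in how you propose to pass from canonical to terminal.

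Your plan is to verify the codimension bound by a ``stratified analysis that produces additional savings along degeneration loci'' of the sections. This would mean directly counting $m$-jets supported over the singular locus of $\Mor(C,X,L)$, which is the locus where $H^1(C,f^*T_X)\neq 0$; inserting that cohomological constraint into the circle method is a different and substantially harder problem, and you give no indication of how to carry it out. The paper takes a cleaner route that bypasses any description of the singular locus: since $J_m(Y)$ is itself a local complete intersection once it has the expected dimension, its normality is equivalent (again by Musta\c{t}\u{a}'s criterion, now applied to $J_m(Y)$) to irreducibility of the \emph{iterated} jet scheme $J_1(J_m(Y))$. This converts the codimension-two statement into a second point-counting problem, now for $\F_q[t,r]/\langle t^{m+1},r^2\rangle$-points, with exponential sums $S(\alpha,\beta)$ indexed by a \emph{pair} of dual vectors. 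The minor-arc analysis then produces two Weyl-type bounds, one controlled by $\deg\alpha$ and one by $\deg\beta$, and playing them off against each other is exactly what pushes the threshold from $2^{d-1}(d-1)(d^2-d+1)$ in the canonical case to $2^{d-2}(d-1)(4d^2-4d+3)$ in the terminal case. So the quantity $4d^2-4d+3$ does not arise from extra savings along a singular stratum, as you suggest, but from the combinatorics of handling the pair $(\alpha,\beta)$ in the iterated jet count. Without this iterated-jet reformulation your sketch does not close.
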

% \begin{table}[b]
% \caption{The value of $e_0$ for terminal singularities.}
% \label{Table: TermSing}
% \begin{tabular}{|l|l|l|}
% \hline
%   $g$    &  $d$    & $e_0$                                                                        \\ \hline
% $=0$      & $\geq 3$ & $=0$                                                                          \\ \hline
% $\geq 1$ & $\geq 3$ & $=2^{d-2}\left(d-1\right)^{2}\left(4d^{2}g+\left(4d^{3}-8d^{2}+7d-3\right)f(g)+4d(g-2)-g+4\right)-1+(d+1)g+(d-1)^{2}f(g)$ \\ \hline
% $\geq 1$ & $=2$    & $=29g+14f(g)-5$                                                                \\ \hline
% \end{tabular}
% \end{table}

\begin{table}[h]\centering
\caption{The value of $e_0$ (for terminal singularities)}
\label{Table: TermSing}
\begin{tabular}{|l|l|l|}
\hline 
$g$ & $d$ & $e_{0}$\tabularnewline
\hline 
\hline 
$=0$ & $\ge3$ & $=0$\tabularnewline
\hline 
$\ge1$ & $\ge3$ & $=2^{d-2}\left(d-1\right)^{2}\left(4d^{2}g+\left(4d^{3}-8d^{2}+7d-3\right)f(g)+4d(g-2)-g+4\right)$\tabularnewline
 &  & $\qquad-1+(d+1)g+(d-1)^{2}f(g)$\tabularnewline
\hline 
$\ge1$ & $=2$ & $=29g+14f(g)-5$\tabularnewline
\hline 
\end{tabular}
\end{table}

By precisely the same methods, we have the following similar result for canonical singularities, where we can slightly improve the range of admissible number of variables.
\begin{thm}\label{Thm: Main.CanSing}
    Let $X\subset \P^n_{\C}$ be a smooth hypersurface of degree $d\geq 2$ and $g\geq 0$ be an integer. Then $\mathcal{M}_{g,0}(X,e)$ has at worst canonical singularities when 
    \[
    n+1>\begin{cases}
        2^{d-2}d(2d+1) &\text{if }g=0, e=1 \text{, and }d\geq 4,\\ 
        2^{d-1}(d-1)(de+2) & \text{if }g=0, 2\leq e\leq d-2\text{, and }d\geq 3,\\
        2^{d-1}(d-1)(d^2-d+1) -1&\text{if }g=0, d-1 \le e \text{, and }d\geq 3,\\
        2^{d-1}(d-1)(d^2-d+1) &\text{if }g\geq 1\text{ and }d\geq 2,
    \end{cases}
    \]
    provided $e>e_0$, whose value is given in Table~\ref{Table: CanSing}.
\end{thm}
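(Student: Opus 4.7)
The proof plan is to run precisely the same argument as for Theorem~\ref{Thm: Main.TermSing}, with the numerical thresholds relaxed to reflect that canonical singularities impose a strictly weaker condition than terminal ones. The idea is to test the singularities of $\mathcal{M}_{g,0}(X,e)$ through its jet schemes: by a theorem of Mustata, a locally complete intersection variety $V$ has at worst canonical singularities if and only if every jet scheme $J_m V$ is irreducible, equivalently, has the expected dimension $(m+1)\dim V$. Terminal singularities additionally require a codimension bound on the jets through the singular locus, which is exactly what forces the stricter hypotheses in Theorem~\ref{Thm: Main.TermSing}. Since $\mathcal{M}_{g,0}(X,e)$ is lci of the expected dimension $\bar{\mu}$ in the relevant range (this is already used en route to Theorem~\ref{Thm: Main.TermSing}), it therefore suffices to prove
\[
\dim J_m \mathcal{M}_{g,0}(X,e) \le (m+1)\bar{\mu} \quad\text{for all } m \ge 0.
\]

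To establish this dimension bound, I would describe $J_m \mathcal{M}_{g,0}(X,e)$ as a parameter space of $(n+1)$-tuples of jets of sections of a line bundle on the universal curve satisfying the jet of the defining equation $F$ of $X$, modulo the scaling action and with the curve and line bundle varying in families. I would then count $\F_q$-points of this parameter space via the function-field circle method already developed in the body of the paper for the terminal case: the major arcs contribute a main term of order $q^{(m+1)\bar{\mu}}$, and the minor arcs must be beaten into a power saving. Lang--Weil then converts the point count into the desired dimension bound.

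The main work is in recalibrating the exponential-sum estimates to the quantitative gain that the canonical criterion affords over the terminal one. Terminality requires a dimension bound along every jet of the singular locus that is strictly smaller by an additive constant, whereas canonicity asks only for the bound on the whole jet scheme. In the circle-method bookkeeping this saves roughly a factor of two in the number of variables demanded per unit of jet depth, which, after careful tracking, is reflected in the reduced coefficients in the thresholds of Theorem~\ref{Thm: Main.CanSing}. The values of $e_0$ in Table~\ref{Table: CanSing} arise in exactly the same manner as in the terminal case, with constants rescaled accordingly; the isolated $-1$ in the $d-1\le e$ line tracks a boundary contribution already present in the terminal statement.
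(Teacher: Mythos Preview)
Your high-level strategy---Musta\c{t}\u{a}'s jet-scheme criterion followed by a circle-method point count over finite fields---matches the paper, but two points need correction.

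First, the paper does not work directly with $J_m\mathcal{M}_{g,0}(X,e)$. It first reduces, via a flatness argument (Lemma~\ref{Le: Sing.flatMorph} and Corollary~\ref{reduct_to_fixed}), to showing that $\Mor(C,X,L)$ has canonical singularities for every fixed smooth curve $C$ and line bundle $L$. Only then is Musta\c{t}\u{a}'s criterion applied and the point count on $J_m(\Mor(C,X,L))$ carried out. Working with the stack $\mathcal{M}_{g,0}(X,e)$ directly, ``with the curve and line bundle varying in families,'' would add complications the paper deliberately avoids.

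Second, your description of the canonical/terminal distinction is not the one the paper uses, and this matters for the circle-method setup. The criterion invoked is: an lci variety $Y$ is canonical iff every $J_m(Y)$ is \emph{irreducible}, and terminal iff every $J_m(Y)$ is \emph{normal}; normality of $J_m(Y)$ is then verified by showing $J_1(J_m(Y))$ is irreducible. Consequently the terminal proof counts $\F_q[t,r]/\langle t^{m+1},r^2\rangle$-points of $\Mor(C,X,L)$, yielding exponential sums $S(\alpha,\beta)$ over \emph{pairs} of characters, whereas the canonical proof counts only $\F_q[t]/t^{m+1}$-points and involves single-character sums $S(\alpha)$. The relaxed thresholds in Theorem~\ref{Thm: Main.CanSing} come from summing over one character rather than two in the minor-arc estimate---not from a ``codimension bound on jets through the singular locus.'' Your ``factor of two'' intuition lands near the truth, but the mechanism is the halving of the character sum, and the precise constants emerge from the Weyl-differencing bound of Proposition~\ref{Prop: S(alpha).MainEstimate} rather than Proposition~\ref{Prop: S(alpha,beta).MainEstimate}.
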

\begin{table}[h] \centering
\caption{The value of $e_0$ (for canonical singularities)}
\label{Table: CanSing}
\begin{tabular}{|l|l|l|}
\hline 
$g$ & $d$ & $e_{0}$\tabularnewline
\hline 
\hline 
$=0$ & $\ge3$ & $=0$\tabularnewline
\hline 
$\ge1$ & $\ge3$ & $=(g-1)(2^{d-1}(d-1)^{2}(2d-1)+2)$\tabularnewline
 &  & $\qquad+(d-1)(g+(d-1)f(g))(2^{d-1}(d-1)(d^{2}-d+1)+1)$\tabularnewline
\hline 
$\ge1$ & $=2$ & $=19g+7f(g)-3$\tabularnewline
\hline 
\end{tabular}
\end{table}

For the particular case $e=1$, the space $F_1(X)=\mathcal{M}_{0,0}({X},{1})$ is the \emph{Fano variety of lines} of $X$. It is a classical result due to Altman and Kleiman~\cite{AltKleinFano} that the Fano variety of lines of \emph{any} smooth cubic hypersurface $X\subset \P^{n}$ is smooth for $n\geq 4$. If $X\subset \P^{n}$ is a hypersurface of degree $d\geq 4$, then $F_1(X)$ is still known to be smooth, if one assumes that $X$ is general and $d\leq 2n-3$ \cite[Theorem V.4.3]{Kollar_Rational_Curves}. In particular, Theorem \ref{Thm: Main.TermSing} gives new information about $F_1(X)$ for smooth $X$ as soon as $d\geq 4$ and $n+1>2^{d-2}d(2d+1)$. 

Under these circumstances, Theorem \ref{Thm: Main.TermSing} even gives new insight into the Hodge numbers of $F_1(X)$. Recall that for a singular projective variety $Y$ over $\C$, the cohomology groups $H^{p+q}(X,\Q)$ admit a mixed Hodge structure with an increasing Hodge filtration $F_\bullet$. Then, the Hodge numbers $h^{p,q}(Y)$ are defined as the $\C$-dimensions of the graded pieces $\Gr^F_{-p}H^{p+q}(X,\C)$. Combining our main results with those of Park and Popa~\cite{park2024lefschetztheoremsqfactorialityhodge} on the relationship between symmetries of the Hodge diamond of $Y$ and higher rational singularities of $Y$ and those of Bilu and Browning~\cite{bilu2023motivic} on the Hodge--Deligne polynomial of the Fano variety of lines gives the following corollary.
\begin{cor}
    Let $X\subset \P^n_{\C}$ be a smooth hypersurface of degree $d\ge 4$ with $n+1>2^{d-2}d(2d+1)$. Denote by $N=2n-d-3$ the dimension of $F_1(X)$ and $h^{p,q}=h^{p,q}(F_1(X))$ for $0\le p,q\le N$. 
   
    Then, $h^{0,q}=h^{q,0}=h^{N,N-q}=h^{N-q,N}$ for $0\le q\le N$, which is moreover 0 for $q>4n+2d-6-2^{2-d}(n+1)$. Also, $h^{1,1}=h^{N-1,N-1}=1$.
\end{cor}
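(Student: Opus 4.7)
The strategy is to combine three separate inputs into a single package: Theorem~\ref{Thm: Main.TermSing} to control the singularity type of $F_1(X)$, the work of Park--Popa to translate that control into Hodge-theoretic symmetries of $F_1(X)$, and the Hodge--Deligne/motivic computation of Bilu--Browning to pin down the precise vanishing range together with the value $h^{1,1}=1$.

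The first step is to verify that Theorem~\ref{Thm: Main.TermSing} applies to $F_1(X) = \mathcal{M}_{0,0}(X,1)$. This is the first line of Table~\ref{Table: TermSing} (corresponding to $g=0$, $e=1$, $d\ge 3$, where $e_0=0$) combined with the first line of the bound in Theorem~\ref{Thm: Main.TermSing} (the regime $g=0$, $e=1$, $d\ge 4$), which requires exactly $n+1>2^{d-2}d(2d+1)$. Under these assumptions the theorem gives that $F_1(X)$ has at worst terminal singularities; in particular $F_1(X)$ is klt, hence has rational singularities and is Cohen--Macaulay. The dimension statement $N=2n-d-3$ is the standard expected dimension of the Fano variety of lines, which coincides with the actual dimension for smooth hypersurfaces by (e.g.) \cite{Kollar_Rational_Curves}.

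The second step is to feed the singularity information into the Park--Popa framework. Their results relate higher rational singularities to Lefschetz-type symmetries of the Hodge diamond: they show that for projective varieties with sufficiently nice singularities, the ``extremal'' Hodge numbers $h^{p,0}$, $h^{0,p}$, $h^{N,N-p}$, $h^{N-p,N}$ satisfy the same symmetries (Hodge symmetry and Serre duality) as in the smooth case. Applied to $F_1(X)$, whose terminal singularities provide precisely the input required, this yields the full chain of equalities $h^{0,q}=h^{q,0}=h^{N,N-q}=h^{N-q,N}$ for $0\le q\le N$.

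The third step is to extract the quantitative information from Bilu and Browning's motivic computation \cite{bilu2023motivic}. Their motivic circle method produces, in the range $n+1>2^{d-2}d(2d+1)$, an explicit description of the class of $F_1(X)$ in the Grothendieck ring of varieties modulo a class of sufficiently low ``motivic weight.'' Passing to the Hodge--Deligne polynomial, this forces $h^{0,q}=0$ whenever $q$ exceeds the weight threshold furnished by the error term, and a direct bookkeeping of their exponents produces exactly the bound $q>4n+2d-6-2^{2-d}(n+1)$. The same explicit motivic description, in its lowest weight piece, isolates a single Tate summand accounting for the Plücker hyperplane class and hence gives $h^{1,1}=1$; then $h^{N-1,N-1}=1$ follows from Serre duality on the Cohen--Macaulay variety $F_1(X)$.

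The main obstacle will be matching the precise hypotheses of the Park--Popa theorems against what Theorem~\ref{Thm: Main.TermSing} actually outputs: terminal singularities guarantee rationality and Cohen--Macaulayness, but Park--Popa's symmetries on the full extremal row of the Hodge diamond are typically stated under higher rational or higher Du Bois hypotheses. To close this gap one either appeals to the fact that the relevant Hodge symmetries for the extremal pieces $h^{0,q}$ and $h^{N,N-q}$ depend only on the rationality of $F_1(X)$ and the Cohen--Macaulay duality for $\omega_{F_1(X)}$, or one slightly sharpens the singularity analysis via the jet scheme/circle-method input of the paper to verify the stronger condition in this specific moduli setting; either way the quantitative vanishing from Bilu--Browning is what converts the symmetries into the finite, explicit statement of the corollary.
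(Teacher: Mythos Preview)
Your outline of the first two claims (the edge symmetries $h^{0,q}=h^{q,0}=h^{N,N-q}=h^{N-q,N}$ and their vanishing for large $q$) matches the paper: terminal singularities from Theorem~\ref{Thm: Main.TermSing} give rational singularities, Park--Popa's Theorem~C supplies the symmetry, and Bilu--Browning's Corollary~1.6 supplies the vanishing range. Your worry in the last paragraph about ``higher rational'' hypotheses is unfounded here --- Theorem~C only needs rational singularities, which terminal implies --- so that part is fine.

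The real gap is in your treatment of $h^{1,1}=h^{N-1,N-1}=1$, and it is twofold. First, you have the direction of Bilu--Browning backwards: their motivic circle method controls $[F_1(X)]$ modulo classes of \emph{low} dimension, so the Hodge--Deligne polynomial is determined only in the \emph{high}-weight range $p+q>4n+2d-6-2^{2-d}(n+1)$. This computes $h^{N-1,N-1}=1$, not $h^{1,1}$; the pair $(1,1)$ is nowhere near that threshold. Second, your appeal to ``Serre duality on the Cohen--Macaulay variety $F_1(X)$'' to transfer this to $h^{1,1}$ does not work: Serre duality relates $H^i(\omega)$ to $H^{N-i}(\mathcal{O})$, but the Hodge numbers here are defined via Deligne's filtration on singular cohomology, and for singular varieties the central symmetry $h^{p,q}=h^{N-p,N-q}$ can genuinely fail. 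Establishing it is precisely the content of Park--Popa's Theorem~A, which has an extra hypothesis you have not verified.

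The paper closes this gap by proving that $F_1(X)$ is \emph{factorial}: Browning--Sawin's Theorem~1.2 bounds the codimension of the singular locus by $n/2^{d-2}-6(d-1)>3$, and then Grothendieck's SGA2 (Corollaire~3.14) gives factoriality. With factoriality, rational singularities, and the complete intersection structure in hand, Park--Popa's Theorem~A yields the Betti-number equality $h^2(F_1(X))=h^{2N-2}(F_1(X))$; subtracting the already-known edge equalities $h^{0,2}=h^{N-2,N}$ and $h^{2,0}=h^{N,N-2}$ forces $h^{1,1}=h^{N-1,N-1}$, and then Bilu--Browning gives the value $1$.
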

\begin{proof}
    By Theorem \ref{Thm: Main.TermSing}, $F_1(X)$ is a normal complete intersection with at worst terminal singularities and hence at worst rational singularities. The claim $h^{0,q}=h^{q,0}=h^{N,N-q}=h^{N-q,N}$ for $0\le q\le N$ then follows immediately from Theorem C of \cite{park2024lefschetztheoremsqfactorialityhodge}. Corollary 1.6 of \cite{bilu2023motivic} then gives $h^{0,q}=h^{q,0}=0$ provided $q>4n+2d-6-2^{2-d}(n+1)$.
    
    Moreover, it follows from Theorem~1.2 of \cite{BrowningSawinFree} that the singular locus of $F_1(X)$ has codimension at least $n/2^{d-2}-6(d-1)\ge d(2d+1)-6(d-1)>3$, so by Corollaire 3.14 of \cite{grothendieck2005cohomologielocaledesfaisceaux}, $F_1(X)$ is factorial. By Theorem A of \cite{park2024lefschetztheoremsqfactorialityhodge}, since $F_1(X)$ is a factorial complete intersection with rational singularities, we have the equality of Betti numbers $h^2(F_1(X))=h^{2N-2}(F_1(X))$. But we have already shown that the identities $h^{0,2}=h^{2,0}=h^{N,N-2}=h^{N-2,N}$ hold, so we must have $h^{1,1}=h^{N-1,N-1}$. Corollary 1.6 of \cite{bilu2023motivic} gives $h^{N-1,N-1}=\lfloor1/2\rfloor+1=1$, since $N-1+N-1>4n+2d-6-2^{2-d}(n+1)$.
\end{proof}

Of course, instead of investigating the quality of the singularities, one may ask the related question of whether the singular locus of $\mathcal{M}_{g,0}(X,e)$ is large. Because terminality only appears in at least codimension three, our results give a weak lower bound on the codimension of the singular locus of $\mathcal{M}_{g,0}(X,e)$. When $d\geq 3$ and $n\geq 2d$, Harris, Roth and Starr \cite{HarrisRothStarrRatCurvesI} showed that $\mathcal{M}_{0,0}(X,e)$ is generically smooth for generic $X$. In an unpublished note, Starr and Tian moreover proved a lower bound for the codimension of the singular locus for general hypersurfaces of low degree using a bend-and-break approach. For an arbitrary smooth hypersurface $X$, Browning and Sawin \cite{BrowningSawinFree} used the circle method to give stronger bounds on the codimension of the singular locus of $\mathcal{M}_{0,0}(X,e)$ at the expense of an exponential lower bound on $n$ in terms of $d$. Recently, in \cite{codimjumping}, Lehmann, Riedl, and Tanimoto used ideas from developments related to geometric Manin's conjecture to give a non-explicit linear lower bound on the codimension of families of non-free curves that do not arise from accumulating maps for even the higher genus setting and an arbitrary Fano variety $X$.

\subsection*{Outline} We now give a brief overview of the main steps in the proofs of Theorems \ref{Thm: Main.TermSing} and \ref{Thm: Main.CanSing}. Instead of working with $\mathcal{M}_{g,0}(X,e)$ directly, it suffices to work with the moduli space $\Mor(C,X,L)$ of degree $e$ morphisms from a fixed curve $C$ to $X$ corresponding to a specific line bundle $L$. We explain this reduction in Section~\ref{reductnaive}.

More precisely, recall there is a natural map $\Mor_{e}\left(C,X\right)\to\Pic^{e}\left(C\right)$
given by pulling back $\mathcal{O}_{X}\left(1\right)$ to $C\times T$ along
$C\times T\to X\times T$ for a $T$-point of $\Mor_{e}\left(C,X\right)$. Denote by $\Mor(C,X,L)$ the fiber above a closed point $[L]\in \Pic^e(C)$.  Our proof relies on studying the jet schemes $J_m(\Mor(C,X,L))$ as well as the iterated jet schemes $J_1\left(J_m(\Mor(C,X,L))\right)$ for any integer $m\geq 0$. The link to terminal and canonical singularities is provided by a result due to Musta\c{t}\u{a} \cite{mustataJetLCI}, which relates terminality and canonicity of local complete intersection varieties to geometric properties about their (iterated) jet schemes, namely that for all $m$, the schemes $J_m(\Mor(C,X,L))$ and $J_1\left(J_m(\Mor(C,X,L))\right)$ are irreducible local complete intersection varieties. 

More precisely, we will deduce Theorems \ref{Thm: Main.TermSing} and \ref{Thm: Main.CanSing} from the following results. 
\begin{prop}\label{Prop: Main.Jet.Term}
     Let $C$ be a smooth projective curve of genus $g$ over the complex numbers. Under the assumptions of Theorem \ref{Thm: Main.TermSing}, for any $m\geq 0$, the jet schemes $J_m(\Mor(C,X,L))$ and iterated jet schemes $J_1(J_m(\Mor(C,X,L)))$ are irreducible and of the expected dimensions $(m+1)\dim\Mor(C,X,L)$ and $2(m+1)\dim\Mor(C,X,L)$, respectively, for all $[L]\in \Pic^e(C)$.
\end{prop}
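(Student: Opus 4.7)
\emph{Proof proposal.} The plan is to present $\Mor(C,X,L)$ and its (iterated) jet schemes explicitly as zero loci of systems of polynomial equations in affine spaces, and then to count $\F_q$-points via a jet-scheme analogue of the function field circle method of \cite{timpankaj, BrowningSawinFree, haseliu2024higher}, deducing irreducibility and the expected dimension from the leading-term asymptotic.

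Concretely, since $\deg L = e$ is large relative to $g$, both $L$ and $L^{\otimes d}$ are nonspecial, so $\dim H^0(C, L) = e - g + 1$ and $\dim H^0(C, L^{\otimes d}) = de - g + 1$. A morphism $\phi \colon C \to X \subset \P^n$ with $\phi^\ast \mathcal{O}(1) \simeq L$ corresponds, up to $\G_m$-scaling, to a base-point-free $(n+1)$-tuple $(s_0, \ldots, s_n) \in H^0(C, L)^{n+1}$ satisfying $F(s_0, \ldots, s_n) = 0$ in $H^0(C, L^{\otimes d})$, where $F$ is the defining polynomial of $X$. Fixing bases, the affine cone over $\Mor(C,X,L)$ embeds into $\A^{(n+1)(e-g+1)}$ as an open subset of a closed subscheme cut out by $de-g+1$ polynomial equations of degree $d$. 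Consequently, $J_m(\Mor(C,X,L))$ is cut out inside the corresponding $J_m$ of this ambient affine space, which is itself an affine space of dimension $(n+1)(e-g+1)(m+1)$, by $(m+1)(de-g+1)$ equations, and similarly for the iterated jet scheme with twice as many variables and equations. Should the dimension of each jet scheme attain the expected value, the regular-sequence property forces the local complete intersection structure required for later use.

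Next, spreading the entire picture to a model over $\Z$, one counts $\F_q$-points: a point of $J_m(\Mor(C,X,L))$ over $\F_q$ corresponds to a tuple of truncated power series $s_i(t) \in H^0(C,L) \otimes \F_q[t]/(t^{m+1})$ with $F(s_0(t), \ldots, s_n(t)) \equiv 0 \pmod{t^{m+1}}$, which amounts to the vanishing, in $H^0(C, L^{\otimes d})$, of each coefficient $[t^j] F(s(t))$ for $0 \leq j \leq m$. I would apply the circle method to the variables $s_i^{(j)} \in H^0(C,L)$, adapting the framework of Browning--Sawin and Hase-Liu to this thickened system of equations: for the minor arcs one needs a Weyl-type bound for exponential sums twisted by the coefficients $[t^j] F(s(t))$, while for the major arcs one must show that the singular series and singular integral factorize correctly and are positive. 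A Lang--Weil comparison then converts an asymptotic of the form $q^{D_m}\bigl(1 + O(q^{-\delta})\bigr)$ into geometric irreducibility and expected dimension over $\C$; the analogous strategy applied to the iterated jet scheme handles the second half of the proposition.

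The main technical obstacle is establishing the Weyl-type bound for this thickened system: the $(m+1)(de-g+1)$ equations it comprises are not generic but are organized hierarchically by the $t$-adic filtration, and their minor-arc analysis requires a careful dimension count for the associated singular-locus variety, essentially a jet-scheme analogue of the Birch singular set appearing in \cite{BrowningSawinFree, haseliu2024higher}. Once this bound is in place, the numerical hypotheses on $n$ and $e$ in Theorem~\ref{Thm: Main.TermSing} are precisely what guarantees the minor-arc contribution is negligible, while the leading main term automatically has coefficient $1$, yielding irreducibility and the expected dimension (and hence the LCI property) simultaneously.
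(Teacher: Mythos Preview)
Your high-level outline (explicit equations, spreading out, circle method, Lang--Weil) matches the paper's, but your description of the actual circle-method implementation has two substantive gaps that the paper handles in a specific, non-obvious way.

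\textbf{First, the system-of-equations framing.} You propose to ``apply the circle method to the variables $s_i^{(j)}\in H^0(C,L)$'' for the ``$(m+1)(de-g+1)$ equations'' comprising the thickened system. The paper explicitly warns that this naive framing fails: a Birch-type circle method for a system of $R$ equations needs the number of variables to grow roughly like $R^2$, so bounds uniform in $m$ (which you need, since the conclusion is for \emph{all} $m$) are unattainable this way. The paper's key idea is \emph{not} to unpack the $t$-adic coefficients, but to keep working with a single equation over $\F_q[t]/t^{m+1}$. The circle is $P_{de,C,m}^\vee$, and the Weyl differencing step exploits the $t$-adic structure by writing $\vec{x}=\vec{y}+t^{m'}\vec{z}$ with $m'=\lceil (m+1)/2\rceil$, so that $F(\vec{x})\equiv F(\vec{y})+t^{m'}\vec{z}\cdot\nabla F(\vec{y})$; the resulting counting function $N^{(k)}(\alpha)$ is then controlled by the dimension of the $k$th jet scheme of the variety cut out by the associated multilinear forms (Lemma~5.7), which is what makes the bounds uniform in $m$. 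Your ``jet-scheme analogue of the Birch singular set'' is gesturing in the right direction, but the differencing must be set up over the thickened ring for this to work.

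\textbf{Second, the major arcs.} You say one must show ``the singular series and singular integral factorize correctly and are positive.'' The paper does \emph{not} compute these. Instead, it proves a vanishing lemma (Lemma~4.1): if $\alpha\sim_{\min} Z$ with $\deg Z\le e-2g+1$ and $\vec{y}\bmod t$ globally generates, then the inner linear sum $T(\alpha_0)$ vanishes unless $Z=0$. This forces the entire major-arc contribution for $J_m$ to equal, on the nose, $q^{(n+1)(e-g+1)}$ times the full sum for $J_{m-1}$, giving an induction on $m$ with base case $m=0$ supplied by \cite{BrowningSawinFree, haseliu2024higher}. The global-generation condition on the sections is essential to this vanishing (this is why the paper retains it throughout rather than passing to the full affine cone), and your proposal does not mention it.
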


\begin{prop}\label{Prop: Main.Jet.Can}
    Let $C$ be a smooth projective curve of genus $g$ over the complex numbers. Under the assumptions of Theorem \ref{Thm: Main.CanSing}, the jet schemes $J_m(\Mor(C,X,L))$ are irreducible of the expected dimension $(m+1)\dim\Mor(C,X,L)$ for all $m\ge 0$ and for all $[L]\in \Pic^e(C)$.
\end{prop}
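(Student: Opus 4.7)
The plan is to prove the proposition by a point-count over a finite field together with Lang--Weil, the analytic input being a form of the function-field circle method adapted to jet schemes, extending the setups of \cite{timpankaj, BrowningSawinFree, haseliu2024higher}.

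First, I would spread $X$, $C$, and $L$ to a model over a finitely generated $\Z$-subalgebra of $\C$ and reduce modulo a sufficiently large prime; by standard constructibility it suffices to prove the analogous statement over $\F_q$ for all sufficiently large $q$. Next, I would unwind the defining equations of $J_m(\Mor(C,X,L))$: since $\Mor(C,X,L)$ is (an open subscheme of) the zero locus in $\P(H^0(C,L)^{n+1})$ of $F(f_0,\ldots,f_n)\in H^0(C,L^d)$ (that is, of $de-g+1$ scalar equations), its $m$-th jet scheme sits inside the ambient space obtained by tensoring with $\F_q[t]/t^{m+1}$ and is cut out by the $(m+1)(de-g+1)$ scalar equations expressing $F(f_0,\ldots,f_n)\equiv 0 \pmod{t^{m+1}}$, modulo scaling by $(\F_q[t]/t^{m+1})^\times$.

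I would then count the $\F_q$-points by detecting each scalar equation with an additive character $\psi$, schematically
\[
|J_m(\Mor(C,X,L))(\F_q)| = \frac{1}{q^{(m+1)(de-g+1)}|(\F_q[t]/t^{m+1})^\times|}\sum_{f,\alpha}\psi(\alpha\cdot F(f)),
\]
where $f$ ranges over $H^0(C,L)^{n+1}\otimes \F_q[t]/t^{m+1}$ and $\alpha$ over the dual of $H^0(C,L^d)\otimes \F_q[t]/t^{m+1}$. A Hardy--Littlewood major/minor arc decomposition should then yield
\[
|J_m(\Mor(C,X,L))(\F_q)| = q^{(m+1)\dim \Mor(C,X,L)} + O(q^{(m+1)\dim \Mor(C,X,L)-\delta})
\]
for some $\delta>0$. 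Since the equations cut out the expected dimension, the scheme is already Cohen--Macaulay, so Lang--Weil then forces geometric irreducibility of expected dimension over $\F_q$; spreading back out gives the claim over $\C$, uniformly for all $[L]\in\Pic^e(C)$ (the parameter $L$ enters the analytic estimates only through invariants such as $e$ and $g$).

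The principal obstacle will be the minor-arc estimate. Passing to $m$-jets essentially replaces the curve $C$ by $C\times_{\F_q}\Spec \F_q[t]/t^{m+1}$, which multiplies both the variables and the equations by the factor $m+1$. The exponential-sum bounds of \cite{BrowningSawinFree, haseliu2024higher} (which depend on the codimension of the singular locus of $F$) must be extended to this thickened setting so that the error term remains strictly dominated by the main term, uniformly in $m$. This is where the hypothesis on $n$ in Theorem \ref{Thm: Main.CanSing} enters: it is tuned precisely to provide the necessary minor-arc saving for every $m$ at once, and the constraint $e>e_0$ from Table \ref{Table: CanSing} is what is needed to absorb the genus-dependent losses that arise when replacing $C$ by its fat thickening.
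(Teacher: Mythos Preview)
Your plan is exactly the paper's approach---spread out, Lang--Weil, circle method over the thickened curve, major/minor arcs---and every sentence you wrote appears somewhere in Sections~2--5. However, you have left as black boxes precisely the two technical innovations that make the argument succeed uniformly in $m$, and neither is routine.

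For the major arcs you say nothing beyond ``major/minor arc decomposition''. The paper's key observation (Proposition~\ref{Prop: MajorArcGoal.J1}) is that if one retains the globally-generating condition on the sections throughout---rather than removing it by inclusion--exclusion as is customary---then the major-arc contribution at level $m$ collapses, up to the correct normalizing power of $q$, to the \emph{full} sum at level $m-1$. This sets up an induction on $m$ with base case $m=0$ supplied by \cite{BrowningSawinFree, haseliu2024higher}. The vanishing lemma driving this (Lemma~\ref{Lem: ExpSumVanish.Major.Jm}) genuinely uses that $\vec{x}\bmod t$ globally generates, and the paper remarks that dropping this condition produces extra terms that seem intractable.

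For the minor arcs you write that the bounds of \cite{BrowningSawinFree, haseliu2024higher} ``must be extended to this thickened setting'' uniformly in $m$. This is the crux: the paper points out that Browning--Sawin themselves wrote that a circle-method proof of canonical singularities looked impossible, because naively one faces $m+1$ equations in $(m+1)(n+1)$ variables. The device that rescues it is to split $\vec{x}=\vec{y}+t^{m'}\vec{z}$ with $m'=\lceil(m+1)/2\rceil$, so that $F(\vec{x})=F(\vec{y})+t^{m'}\vec{z}\cdot\nabla F(\vec{y})$; one Weyl-differences in $\vec{y}$ and then evaluates the linear $\vec{z}$-sum. Combined with a shrinking lemma over $\F_q[t]/t^{k+1}$ and a dimension bound for the jet scheme of the multilinear locus (Lemma~\ref{Le: DimEstimate}, whose saving grows like $\lfloor m/(d-1)\rfloor$), this yields the uniform-in-$m$ minor-arc estimate. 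Without this device the naive extension you describe does not give a bound independent of $m$.
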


\begin{rem}
    It seems plausible, though extremely tedious, to prove results similar to Propositions \ref{Prop: Main.Jet.Term} and \ref{Prop: Main.Jet.Can} for iterated jet schemes of the form $J_1(J_1(\cdots J_1(J_m(\Mor(C,X,L)))\cdots ))$ by the same strategy that we present in this paper. More precisely, if $u$ is the number of $J_1$'s in this iterated jet scheme, by appropriately modifying the proof of Theorem 3.3 in \cite{jet_inversion}, it seems likely that one should be able to show for $n$ sufficiently large compared to $d^u2^d$ and $e$ large compared to $d$, $u$, and $g$, the discrepancies of $\Mor(C,X,L)$ are all larger than $u$. 
    
    It would also be interesting to investigate the behavior of iterated higher-order jet schemes of the form $J_m(J_{m'}(\Mor(C,X,L)))$ for arbitrary $m,m'$, for which generalizations of Musta\c{t}\u{a}'s results relating jet schemes and canonical singularities are not known. See \cite{dimjetsing} for further questions and discussions about generalized jet schemes.  
\end{rem}
% For any variety $Y$, the fiber of the morphism $J_m(Y)\to Y$ above a smooth point is isomorphic to $\A^{m \dim Y}$. In addition, points on $\mathcal{M}_{0,0}(X,e)$ correspond to $\PGL_2$ orbits on $\Mor_e(\P^1,X)$, so that the expected dimension of $\Mor_e(\P^1,X)$ is $\bar{\mu}+\dim \PGL_2= \bar{\mu}+3$. In particular, Theorem~\ref{Th: DimIrrJet} confirms the naive expectation that the dimension of $J_m(\Mor_e(\P^1,X))$ is $(m+1)(\bar{\mu}+3)$.

Let us now discuss our approach in more detail, restricting to the case $C=\P^1$ for simplicity. Our strategy follows the path paved by Browning and Vishe~\cite{timpankaj}. More precisely, after performing a spreading out argument, it suffices to prove the corresponding results over a finite field $\F_q$. By appealing to the Lang--Weil estimate this is equivalent to understanding the number of $\F_q$-points on $J_m(\Mor_e(C,X))$ and $J_1(J_m(\Mor_e(C,X)))$ as $q$ goes to infinity. At least for $C=\P^1$, these correspond to $\F_q[t][s]/s^{m+1}$-points and $\F_q[t][s,r]/\left\langle s^{m+1},r^2\right \rangle$-points of degree $e$ in $t$ on $x$. We solve this counting problem by developing a suitable version of the circle method. When $C=\P^1$, this counting problem can be interpreted as counting the number of $\F_q[t]$-points of bounded height on a system of $m+1$ equations (resp. $2(m+1)$) in $(n+1)(m+1)$ (resp. $2(n+1)(m+1)$) variables. Classical applications of the circle method require the number of variables to grow roughly quadratically in the number of equations. In particular, since for $n,d$ and $e$ we want to count points for \emph{all} $m\geq 0$, this approach seems hopeless and in fact Browning and Sawin~\cite{BrowningSawinFree} even wrote in their work that it does not seem possible to prove that $\mathcal{M}_{0,0}(X,e)$ has canonical singularities using the circle method. Nevertheless, we succeed using a modified version of the circle method, as we shall now explain. 

The key point is that we do not treat the equations for $J_m(\Mor_e(C,X))$ as a system of equations over $\F_q(t)$, but keep working with it as a single equation over $\F_q[t][s]/s^{m+1}$. Our situation may thus be compared to that over number fields: if one attempts to count solutions to a single equation over a number field, but uses Weil restriction to consider it as a system of equations over $\Q$, then a naive application of the circle method requires the number of variables to grow quadratically in the degree of the number field. However, as demonstrated by Skinner \cite{SkinnerForms}, if things are set up correctly, one can essentially still treat the system as a single equation and obtain completely analogous results to those of $\Q$. 

In our approach we use a geometric interpretation of harmonic analysis over $(\F_q[s]/s^{m+1})(\!(t^{-1})\!)$ following \cite{haseliu2024higher}, which is particularly useful in the higher genus setting. Then, we continue along the standard lines of attack of the circle method. This includes a division into minor and major arcs followed by a suitable form of Weyl differencing to bound the exponential sums that arise in this process.

% The extraneous factor of $de+1$ in Theorem~\ref{Theorem} arises because the bounds we obtain for the exponential sums involved only depend on rational approximation of the parameter modulo $s$. In particular, our arcs are in some sense too coarse. The reason for this is that Diophantine approximation is significantly more difficult modulo higher powers of $s$ due to the presence of zero divisors and non-constant units.

The aforementioned results on irreducibility and expected dimension for moduli spaces of rational curves on smooth low degree hypersurfaces have been generalized to the setting of smooth complete intersections of the same degree by Browning, Vishe and Yamagishi~\cite{browning2024rationalcurvescompleteintersections}. More precisely, given ${p_1,\dots, p_b\in \P^1}$ and $y_1,\dots, y_b\in X$, they extended their method to deal with the spaces $\Mor_e(\P^1, X, p_1,\dots, p_b ; y_1,\dots, y_b)$ of degree $e$ morphisms $f\colon \P^1\to X$ satisfying $f(b_i)=p_i$ for $i=1,\dots, b$. Their approach is based on a function field version of Rydin~Myerson's work \cite{myerson2018quadratic, myerson2019systems} that allows one to handle systems of forms with the circle method for which the number of variables only grows linearly in the number of equations. It seems plausible that our work extends to the setting of \cite{browning2024rationalcurvescompleteintersections} and it would be interesting to see whether Rydin~Myerson's approach could be useful in the study of terminal singularities. 

\begin{ack}
    While working on this paper the first author was supported by an FWF grant (DOI 10.55776/P36278) and the second author was partially supported by an NSF Grant (DGE-2036197). The authors would like to thank Tim Browning, Aise Johan de Jong, Brian Lehmann, Amal Mattoo, Mircea Musta\c{t}\u{a}, Mihnea Popa, Eric Riedl, Will Sawin, and Sho Tanimoto for useful comments.
\end{ack}

\section{Preliminaries}
\subsection{Reduction to maps out of a fixed curve}\label{reductnaive}
In this subsection, we explain how to deduce a statement about terminal singularities of the moduli space $\mathcal{M}_{g,0}(X,e)$ from that of $\Mor(C,X,L)$.

\begin{lem}\label{Le: Sing.flatMorph}
Let $f\colon\mathcal{X}\to\mathcal{Y}$ be a flat and finite type morphism
of integral algebraic stacks locally essentially of finite type over
$\C$. Suppose $\mathcal{Y}$ is regular and that all closed fibers
of $f$ have at most terminal (resp. canonical) singularities. Then, $\mathcal{X}$ has at
worst terminal (resp. canonical) singularities. 
\end{lem}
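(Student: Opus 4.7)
The plan is to reduce to the setting of schemes, then propagate the singularity condition from the closed fiber to the total space by iterated inversion of adjunction along a Cartier divisor chain coming from a regular system of parameters on $\mathcal{Y}$.

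First, since terminality and canonicity are smooth-local properties of algebraic stacks, after passing to smooth atlases I reduce to the case where $f$ is a morphism of schemes (essentially) of finite type over $\C$. Because terminality and canonicity are also open conditions on such schemes and closed points are dense, it suffices to verify the condition at every closed point $x \in \mathcal{X}$, whose image $y = f(x)$ is then automatically closed in $\mathcal{Y}$ (over an algebraically closed field, closed points map to closed points under finite type morphisms). Regularity of $\mathcal{Y}$ at $y$ supplies a regular system of parameters $t_1, \dots, t_d \in \mathcal{O}_{\mathcal{Y}, y}$ with $d = \dim_y \mathcal{Y}$, which I spread to a neighborhood of $y$. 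Flatness of $f$ then guarantees that $(f^*t_1, \dots, f^*t_d)$ forms a regular sequence in $\mathcal{O}_{\mathcal{X}, x}$, yielding a tower of Cartier divisor inclusions
\[
\mathcal{X}_y \;=\; V(f^*t_1, \dots, f^*t_d) \;\subset\; V(f^*t_1, \dots, f^*t_{d-1}) \;\subset\; \dots \;\subset\; V(f^*t_1) \;\subset\; \mathcal{X}
\]
near $x$, whose base $\mathcal{X}_y$ has terminal (resp.\ canonical) singularities at $x$ by assumption.

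I would then apply inversion of adjunction for terminal (resp.\ canonical) singularities: if $Z \subset W$ is a Cartier divisor and $Z$ has terminal (resp.\ canonical) singularities at a point $v \in Z$, then so does $W$. Iterating $d$ times along the chain above lifts the singularity condition from $\mathcal{X}_y$ up to $\mathcal{X}$, completing the proof.

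The main obstacle is invoking inversion of adjunction in the terminal/canonical setting rather than the better-known klt/plt one. The key ingredient is Kawakita's equality $\mathrm{mld}(Z; v) = \mathrm{mld}(W, Z; v)$ for $Z$ a Cartier divisor in $W$, combined with the bound $\mathrm{mult}_E(Z) \ge 1$ for exceptional divisors $E$ whose center lies in $Z$: together, these upgrade a lower bound on the discrepancies of $Z$ to a strictly stronger lower bound on those of $W$. Secondarily, one must check that the openness-of-terminal/canonical input really applies here, but this follows from the fact that on a normal $\mathbb{Q}$-Gorenstein variety the non-canonical (resp.\ non-terminal) locus is the image of the finitely many divisors of a log resolution with negative (resp.\ nonpositive) discrepancy, hence closed. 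For the precise inversion-of-adjunction statements I would refer to Chapter~7 of Koll\'ar's \emph{Singularities of the Minimal Model Program}.
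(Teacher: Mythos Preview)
Your argument is correct and follows essentially the same route as the paper: reduce to schemes via smooth atlases, then cut down along a regular system of parameters on the base and invoke the fact that a Cartier divisor with terminal (resp.\ canonical) singularities forces the same on the ambient space nearby. The paper packages this as an induction on $\dim\mathcal{Y}$, peeling off one parameter at a time and separately treating the complement of $f^{-1}(V(x_d))$ via an auxiliary step (Murayama, Cor.~4.18(i)) that upgrades the hypothesis from closed fibers to all fibers so the induction applies over the localized base; your pointwise version, iterating the full Cartier chain at a fixed closed $x\in\mathcal{X}$, is mildly more economical because it never leaves a neighborhood of $x$ and so avoids that upgrade entirely.

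One caution on your justification of the key step. Kawakita's theorem is about log canonicity, and the precise mld equality $\mathrm{mld}(Z;v)=\mathrm{mld}(W,Z;v)$ you write down is Shokurov's precise-inversion-of-adjunction conjecture: it is known when the ambient $W$ is smooth or LCI (Ein--Musta\c{t}\u{a}--Yasuda, Ein--Musta\c{t}\u{a}) but open in general, and in any case it only directly constrains exceptional divisors with center contained in $Z$, so even granting it you would still owe an argument for the others. The statement you actually need --- $Z\subset W$ Cartier and $Z$ terminal (resp.\ canonical) implies $W$ terminal (resp.\ canonical) near $Z$ --- is the deformation-invariance theorem (Kawamata for canonical; the paper cites it as Prop.~4.15 of Murayama). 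It is a theorem, but its proof does not run through the mld equality, so you should cite it directly rather than derive it from Kawakita.
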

% \begin{rem} The assumptions imply that $\mathcal{X}$ is normal, so it makes sense to ask about terminality (resp. canonicity) of singularities; indeed, if we have a flat morphism of schemes $X\to Y$ whose fibers are geometrically normal and $Y$ is normal, then $X$ itself is normal by Serre's criterion.
% \end{rem}

\begin{proof}
This argument is due to Takumi Murayama (see \cite{takumi}). We first reduce to the case of schemes. Let $V\to\mathcal{Y}$ be
a smooth atlas and consider the fiber product $V\times_{\mathcal{Y}}\mathcal{X}$.
Then, let $U\to V\times_{\mathcal{Y}}\mathcal{X}$ be a smooth atlas
as well. Note that the composition $U\to V\times_{\mathcal{Y}}\mathcal{X}\to V$
satisfies the conditions of the lemma, since $V$ is regular over
$\C$, and the closed fibers, because they are smooth over the closed
fibers of $f$, have at worst terminal (resp. canonical) singularities by Proposition 2.15 of \cite{Kollar_2013} (having at worst terminal or canonical singularities is a smooth-local property). So we may assume $\mathcal{X}$ and $\mathcal{Y}$ are schemes.

Replacing $\mathcal{Y}$ with $\Spec\left(\mathcal{O}_{\mathcal{Y},y}\right)$
for $y\in\mathcal{Y}$, we can reduce to the case of $\mathcal{Y}$
the spectrum of a regular local ring, say with $x_{1},\ldots,x_{d}$
a regular system of parameters. 

For $d=0$, there is nothing to prove. Assume the claim is true inductively
for smaller $d$. By the induction hypothesis, $f^{-1}\left(V(x_{d})\right)$ has at
worst terminal (resp. canonical) singularities. Since $f^{-1}\left(V(x_{d})\right)$
is a Cartier divisor and small deformations of terminal (resp. canonical) singularities
are also terminal (resp. canonical) by Proposition 4.15 of \cite{Murayama_Grothendieck}, it follows that the singularities in a neighborhood
of $f^{-1}\left(V(x_{d})\right)$ are at worst terminal (resp. canonical). 

By Corollary 4.18(i) of \cite{Murayama_Grothendieck}, every fiber of $f$ has at worst terminal (resp. canonical) singularities. Then, the complement of $f^{-1}\left(V(x_{d})\right)$ also has at worst
terminal (resp. canonical) singularities by applying the induction hypothesis to the
restriction of $f$ to $\Spec\mathcal{O}_{\mathcal{Y},y}\left[x_{d}^{-1}\right]$,
since the latter has dimension less than $d$. 
The result follows.
\end{proof}

\begin{cor}\label{reduct_to_fixed}
Let $g\ge 0$ be an integer. For all smooth projective curves $C$ over $\mathbb{C}$ of genus $g$ and all $[L]\in \Pic^e(C)$, suppose $\Mor\left(C,X,L\right)$ is a non-empty local complete intersection of the expected dimension with at worst terminal (resp. canonical) singularities. Then, $\mathcal{M}_{g,0}\left(X,e\right)$ is also a local complete intersection with at worst terminal (resp. canonical) singularities.
\begin{proof}
Consider the forgetful morphism $f\colon\mathcal{M}_{g,0}\left(X,e\right)\to\mathcal{M}_{g}$, which on points forgets the map to $X$. We first claim that $f$ is flat.

To verify this, we invoke miracle flatness, i.e. checking that the source is Cohen--Macualay, the target is regular, and the fibers are of the same dimension.

In general, for a surjective map of varieties or Deligne--Mumford stacks $X\to Y$, we have $\dim X_y \ge \dim X - \dim Y$ for any point $y\in Y$. Since every fiber $\Mor_{e}\left(C,X\right)$ of $f$ has the expected dimension, the total space $\mathcal{M}_{g,0}\left(X,e\right)$ must also be of the expected dimension, i.e. it is a local complete intersection Deligne--Mumford stack cut out by $de-g+1$ equations in the smooth Deligne--Mumford stack $\mathcal{M}_{g,0}(\P^n,e)$ by Proposition 3 of \cite{non-smooth}. 

In particular, $\mathcal{M}_{g,0}\left(X,e\right)$ is Cohen--Macaulay, $\mathcal{M}_{g}$ is always smooth, and the fibers are of the same (expected) dimension, so we conclude that $f$ is flat.

The same argument shows that the map $\Mor_e(C,X) \to \Pic^e(C)$ is flat with closed fibers $\Mor(C,X,L)$. By assumption on the singularities of $\Mor(C,X,L)$, Lemma~\ref{Le: Sing.flatMorph} then implies that $\Mor_e(C,X)$ has at worst terminal (resp. canonical) singularities. Then, applying Lemma~\ref{Le: Sing.flatMorph} again to the map $f$ implies that $\mathcal{M}_{g,0}(X,e)$ has at worst terminal (resp. canonical) singularities. 
\end{proof}
\end{cor}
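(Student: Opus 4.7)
The plan is to apply Lemma~\ref{Le: Sing.flatMorph} twice, factoring the passage from $\Mor(C,X,L)$ up to $\mathcal{M}_{g,0}(X,e)$ through the intermediate moduli space $\Mor_e(C,X)$. First, I would analyze the forgetful morphism $f\colon \mathcal{M}_{g,0}(X,e)\to \mathcal{M}_g$, whose fiber over $[C]\in \mathcal{M}_g$ is $\Mor_e(C,X)$, and the ``Abel--Jacobi''-type morphism $\pi\colon \Mor_e(C,X)\to \Pic^e(C)$ given by $F\mapsto F^*\mathcal{O}_X(1)$, whose fibers are the spaces $\Mor(C,X,L)$. Both targets are smooth, so Lemma~\ref{Le: Sing.flatMorph} applies once I know flatness and that the fibers have at worst terminal (resp.\ canonical) singularities.

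For flatness, I would invoke miracle flatness, which requires the source to be Cohen--Macaulay, the target smooth, and the fibers of constant expected dimension. The hypothesis gives that each $\Mor(C,X,L)$ is a non-empty local complete intersection of the expected dimension; combined with the fact that $\Pic^e(C)$ is smooth, this forces $\Mor_e(C,X)$ to be of the corresponding expected dimension. Since $\Mor_e(C,X)$ is cut out by $de-g+1$ equations inside the smooth Deligne--Mumford stack $\mathcal{M}_{g,0}(\P^n,e)|_{C}$, this expected dimension count implies it is a local complete intersection, hence Cohen--Macaulay, which secures miracle flatness for $\pi$. An identical argument, using Proposition~3 of \cite{non-smooth} as in the excerpt and the equidimensionality of all fibers $\Mor_e(C,X)$ just established, shows that $\mathcal{M}_{g,0}(X,e)$ is a local complete intersection and the map $f$ is flat.

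With flatness in hand, the singularity propagation is clean. Applying Lemma~\ref{Le: Sing.flatMorph} to $\pi\colon \Mor_e(C,X)\to \Pic^e(C)$ promotes the terminal (resp.\ canonical) assumption on every closed fiber $\Mor(C,X,L)$ to the total space $\Mor_e(C,X)$. Applying Lemma~\ref{Le: Sing.flatMorph} a second time to $f$---whose closed fibers are exactly the just-shown-terminal (resp.\ canonical) spaces $\Mor_e(C,X)$---gives the desired conclusion about $\mathcal{M}_{g,0}(X,e)$. One small verification is needed: Lemma~\ref{Le: Sing.flatMorph} requires the source to be an integral stack, which follows from the irreducibility hypothesis propagated up from the fibers (and will in any case be supplied by Proposition~\ref{Prop: Main.Jet.Term}/\ref{Prop: Main.Jet.Can} applied with $m=0$).

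The only real obstacle is the bookkeeping step that verifies the hypotheses of miracle flatness, i.e.\ that equidimensionality of fibers of the expected dimension really does force the total space to be a local complete intersection of the expected dimension. Once one spells out that $\mathcal{M}_{g,0}(X,e)$ sits inside the smooth stack $\mathcal{M}_{g,0}(\P^n,e)$ as the zero locus of the pushforward of the defining equation of $X$---yielding at most $de-g+1$ equations---the inequality $\dim \ge \dim \mathcal{M}_{g,0}(\P^n,e)-(de-g+1)=\bar\mu$ combined with the matching upper bound from the fibers forces equality and the local complete intersection property simultaneously. The rest is a formal two-step application of Lemma~\ref{Le: Sing.flatMorph}.
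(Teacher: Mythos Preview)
Your proposal is correct and follows essentially the same route as the paper: miracle flatness for both $\Mor_e(C,X)\to\Pic^e(C)$ and $\mathcal{M}_{g,0}(X,e)\to\mathcal{M}_g$, deduced from the expected-dimension hypothesis forcing the local complete intersection property, followed by two applications of Lemma~\ref{Le: Sing.flatMorph}. Your remark about verifying integrality of the source is a good observation that the paper leaves implicit.
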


\begin{rem}
For $g=0$, we can argue more directly as follows: $\mathcal{M}_{0,0}\left(X,e\right)$
can be realized as the stack quotient $\left[\Mor_{e}\left(C,X\right)/\PGL_{2}\right]$,
and since $\PGL_{2}$ is smooth, we have $\Mor_{e}\left(C,X\right)\to\mathcal{M}_{0,0}\left(X,e\right)$
is an atlas, and hence we can simply use the fact that having at worst terminal (resp. canonical) singularities
is smooth-local.
\end{rem}

\subsection{Properties of jet schemes}
Following \cite{mustataJetLCI}, let us recall some basic facts about jet schemes. Fix a field of arbitrary characteristic $k$ and let $Y$ be a scheme of finite type over $k$. 

For each integer $m\ge 0$, the $m$th jet scheme $J_m(Y)$ is defined as the scheme whose representable functor of points $F$ in the category of finite type $k$-schemes satisfies
$$F(A) = Y\left(A[t]/t^{m+1}\right),$$
where $A$ is any finitely-generated $k$-algebra. For instance, there are canonical isomorphisms $J_0(Y)\cong Y$ and $J_1(Y)\cong \mathcal{T}Y$, the total space of the tangent bundle of $Y$.

The various jet schemes are equipped with canonical maps $\phi_m\colon J_m\left(Y\right) \to J_{m-1}\left(Y\right),$ which on the level of points are defined by modding out by $t^m.$ Let us write $\pi_m\colon J_m\left(Y\right) \to Y$ to denote the canonical projection (alternatively, this is simply the composition $\phi_1\circ \cdots \circ \phi_m$).

The construction of jet schemes is compatible with open immersions, and it is hence possible to describe them explicitly by reducing to the affine case. From this construction, one can show that if $Y$ is a smooth, connected variety of dimension $n$, then for every $m$, the morphism $\pi_m$ is an affine bundle whose fibers are $mn$-dimensional. Moreover, $J_m\left(Y\right)$ is smooth, connected, and $(m+1)n$-dimensional. 

In general, for any finite type $k$-scheme $X$, the \emph{expected dimension} of $J_m(X)$ is $(m+1)\dim X$.

In Theorem~1.3 of \cite{mustataInversionAdjunction} and Proposition~1.7 of \cite{mustataJetLCI}, Ein and Musta\c{t}\u{a} establishes the following criteria for having canonical and terminal singularities.
\begin{thm}
Let $Y$ be a local complete intersection variety. Then
\begin{enumerate}
    \item $Y$ has at worst terminal (resp. canonical) singularities
if and only if the jet schemes $J_{m}(Y)$ are normal (resp. irreducible) for every $m$, 
    \item $J_m(Y)$ is a local complete intersection variety if $J_m(Y)$ has the expected dimension, and 
    \item $Y$ is normal if $J_{m}(Y)$ is irreducible for some
$m\geq 1$ (equivalently for $m=1$).
\end{enumerate}

\end{thm}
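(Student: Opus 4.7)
The plan is to deduce all three parts (i), (ii), (iii) from the theory of arc spaces and the Denef--Loeser change of variables formula, following the strategy of Ein and Musta\c{t}\u{a}. I would begin with (ii), which is a pure dimension count: working locally, embed $Y$ as the zero locus of $c = N - \dim Y$ equations $g_1,\dots,g_c$ inside some $\A^N$. Then $J_m(Y)$ sits inside $J_m(\A^N) \cong \A^{N(m+1)}$ cut out by the $c(m+1)$ coefficients of $g_i(x_0 + x_1 t + \cdots + x_m t^m) \bmod t^{m+1}$. If $\dim J_m(Y)$ equals the expected value $(m+1)\dim Y$, the codimension matches the number of equations exactly, so $J_m(Y)$ is a local complete intersection on each affine chart.

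Next I would handle (iii), which combines (ii) with Serre's criterion. Since $Y$ is LCI, it is Cohen--Macaulay, hence satisfies $S_2$, so it suffices to prove $R_1$. Assuming for contradiction that the singular locus contains a codimension-one irreducible component $Z$, over the generic point $\eta_Z$ the Zariski tangent space has dimension strictly greater than $\dim Y$, so $\pi_1^{-1}(Z) \subset J_1(Y) = \mc TY$ would have dimension exceeding $2\dim Y$; this would produce a second irreducible component in $J_1(Y)$ beyond the closure of $\pi_1^{-1}(Y_{\mathrm{sm}})$, contradicting irreducibility of $J_1(Y)$.

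The core of the argument is (i). I would fix a log resolution $f\colon \tilde Y \to Y$ with relative canonical divisor $K_{\tilde Y/Y} = \sum_i a_i E_i$ and analyze the induced map $f_\infty\colon J_\infty(\tilde Y) \to J_\infty(Y)$ on arc spaces. Via the Denef--Loeser change of variables formula, the cylinders in $J_\infty(Y)$ stratified by contact orders $\nu_i = \mathrm{ord}_\gamma E_i$ along the exceptional divisors have codimension computable in terms of $\sum_i \nu_i (a_i + 1)$, and, dually, fibers of truncations $J_m(\tilde Y) \to J_m(Y)$ are controlled by $\sum_i \nu_i a_i$. I would then show that the non-main irreducible components of $J_m(Y)$ (those not arising from the smooth locus) are detected exactly by contact strata that violate $\sum_i \nu_i a_i \geq 0$. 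This yields the equivalence of canonicity ($a_i \geq 0$) with irreducibility of every $J_m(Y)$. Combining this with the $R_1$ argument from (iii) applied level-by-level, normality of every $J_m(Y)$ corresponds to the strict inequalities $a_i > 0$ on exceptional divisors, i.e.\ terminality.

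The main obstacle will be the careful bookkeeping in the change of variables step: correctly computing fiber dimensions of $f_\infty$ stratified by contact loci, and showing that each exceptional divisor $E_i$ contributes cleanly as the discrepancy $a_i$. A secondary but nontrivial point is upgrading from irreducibility to normality---this requires checking that the $R_1$ obstruction on $J_m(Y)$ manifests exactly as strict positivity $a_i > 0$, for which one uses that LCI of the expected dimension (part (ii)) automatically supplies $S_2$ on every $J_m(Y)$, so that the $R_1$ condition is the only remaining obstruction to normality.
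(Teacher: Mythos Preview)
The paper does not give its own proof of this statement: it is quoted as a known result, attributed to Ein and Musta\c{t}\u{a} (specifically, Theorem~1.3 of \cite{mustataInversionAdjunction} and Proposition~1.7 of \cite{mustataJetLCI}), and then used as a black box. So there is no ``paper's own proof'' to compare your proposal against.

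That said, your outline is a faithful sketch of the Ein--Musta\c{t}\u{a} argument that the paper is invoking. Part (ii) is exactly the dimension count you describe. Part (iii) is correct: the key observation is that if $Z\subset Y_{\mathrm{sing}}$ has codimension one, then $\pi_1^{-1}(Z)$ has dimension at least $2\dim Y$, while the closure of $\pi_1^{-1}(Y_{\mathrm{sm}})$ meets $\pi_1^{-1}(Z)$ in something of strictly smaller dimension, forcing reducibility of $J_1(Y)$. For (i), your description of the change-of-variables/contact-locus mechanism is the right picture; the one point you might sharpen is that the normality upgrade is obtained not by redoing the $R_1$ argument from scratch on each $J_m(Y)$, but by observing that irreducibility of $J_1(J_m(Y))$ (which, for $Y$ LCI with canonical singularities, follows from irreducibility of $J_{2m+1}(Y)$ via the natural closed immersion $J_1(J_m(Y))\hookrightarrow J_{2m+1}(Y)$ together with the expected-dimension LCI statement (ii)) feeds back into (iii) applied to $J_m(Y)$. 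This is how strict positivity of the discrepancies translates into normality of all $J_m(Y)$.
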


Assuming the validity of Propositions \ref{Prop: Main.Jet.Term} and \ref{Prop: Main.Jet.Can}, Ein and Musta\c{t}\u{a}'s criteria implies the main theorems:

\begin{proof}[Proof of Theorems \ref{Thm: Main.TermSing} and \ref{Thm: Main.CanSing}, assuming Propositions \ref{Prop: Main.Jet.Term} and \ref{Prop: Main.Jet.Can}]
    Under the assumptions of Theorem \ref{Thm: Main.CanSing}, it follows that $J_{m}\left(\Mor \left(C,X,L\right)\right)$ is irreducible of the expected dimension for all $m$, which by Ein and Musta\c{t}\u{a}'s criteria implies that $\Mor\left(C,X,L\right)$ has at worst canonical singularities. Then, by Corollary \ref{reduct_to_fixed}, $\mathcal{M}_{g,0}(X,e)$ also has at worst canonical singularities.

    Similarly, under the assumptions of the Theorem \ref{Thm: Main.TermSing}, the schemes $J_{m}\left(\Mor\left(C,X,L\right)\right)$ and \linebreak $J_1\left(J_{m}\left(\Mor\left(C,X,L\right)\right)\right)$ are both irreducible of the expected dimension for all $m$, which by Ein and Musta\c{t}\u{a}'s criteria implies that $J_{m}\left(\Mor\left(C,X,L\right)\right)$ is normal for all $m$, which again by Ein and Musta\c{t}\u{a}'s criteria implies that $\Mor\left(C,X,L\right)$ has at worst terminal singularities. Then, by Corollary \ref{reduct_to_fixed}, $\mathcal{M}_{g,0}(X,e)$ also has at worst terminal singularities.
\end{proof}

So it remains to prove Propositions \ref{Prop: Main.Jet.Term} and \ref{Prop: Main.Jet.Can}.

\subsection{Spreading out and reduction to point-counting}

Let us recall the spreading out process as described in Section~2 of \cite{timpankaj}. For every $m$, note that both $J_m\left(\Mor\left(C,X,L\right)\right)$  and $J_1\left(J_m\left(\Mor\left(C,X,L\right)\right)\right)$ are finite type and hence can be defined over a finitely-generated $\Z$-algebra $\Lambda$ (by using the explicit equations cutting out these schemes). More precisely, for fixed $m$, we may assume $C,$ $X,$  $J_m\left(\Mor\left(C,X,L\right)\right)$,  and $J_1\left(J_m\left(\Mor\left(C,X,L\right)\right)\right)$, before base-changing to an algebraic closure, are the generic fibers of the finite type morphisms $\mathcal{C}\to \Spec \Lambda$, $\mathcal{X}\to \Spec \Lambda$, $Y_m\to \Spec \Lambda$, and $Y_{1,m}\to \Spec \Lambda$, respectively.

By Zariski's lemma, for any maximal ideal $\mathfrak{m}$ of $\Lambda$, the quotient $\Lambda/\mathfrak{m}$ is a finite field. Then, irreducibility of $J_m\left(\Mor\left(C,X,L\right)\right)$  and $J_1\left(J_m\left(\Mor\left(C,X,L\right)\right)\right)$ will follow from irreducibility of the fibers $Y_m\times \Spec \Lambda/\mathfrak{m}$ and $Y_{1,m}\times \Spec \Lambda/\mathfrak{m}$. 

Also, by possibly enlarging $\Lambda$, we may assume the fibers $\mathcal{C}\times \Spec \Lambda/\mathfrak{m}$ and $\mathcal{X} \times \Spec \Lambda / \mathfrak{m}$ are still smooth. By inverting $d!$ in $\Lambda$, we may also assume that the residue field of any maximal ideal has characteristic larger than $d$.

As established in the proof of Theorem 1 of \cite{haseliu2024higher} for $g\geq 1$ and Theorem 1.1 of \cite{BrowningSawinFree}, the space of morphisms $\Mor(C,X,L)$ for any $[L]\in\Pic^e(C)$ is irreducible and of the expected dimension, i.e. are local complete intersections, under the assumptions in either Proposition \ref{Prop: Main.Jet.Term} or \ref{Prop: Main.Jet.Can}. 

Using the affine bundle structure of the projection maps $\pi_m$ of jet schemes of the smooth loci, it follows that the jet schemes in consideration always have at least the expected dimension.
By Chevalley's theorem on constructible sets, there is a non-empty open subset $U\subset \Spec \Lambda$ such that the dimension of the generic fiber of $Y_m \to \Spec \Lambda$ (resp. $Y_{1,m} \to \Spec \Lambda$) is at most the dimension of any fiber above a closed point in $U$.
As a consequence, it suffices to show that $J_m\left(\Mor\left(C,X,L\right)\right)$  and $J_1\left(J_m\left(\Mor\left(C,X,L\right)\right)\right)$ are irreducible and of the expected dimension when $C$ and $X$ are defined over a finite field $\F_q$. 

By the Lang-Weil bounds, this is equivalent to the following:
\begin{equation}\label{eq: LW1}
\lim_{q\to\infty}q^{-(m+1)\mu}\#J_m\left(\Mor\left(C,X,L\right)\right)\left(\F_{q}\right)\le1
\end{equation}
and 
\begin{equation}\label{eq: LW2}
\lim_{q\to\infty}q^{-2(m+1)\mu}\#J_1\left(J_m\left(\Mor\left(C,X,L\right)\right)\right)\left(\F_{q}\right)\le1,
\end{equation}
where $\mu=\dim \Mor(C,X,L)=(n+1)(e-g+1)-(de-g+1) -1.$

More precisely, to prove Proposition \ref{Prop: Main.Jet.Term}, it suffices to establish \eqref{eq: LW1} and \eqref{eq: LW2} (under the assumptions of Proposition \ref{Prop: Main.Jet.Term}). Similarly, to prove Proposition \ref{Prop: Main.Jet.Can}, it suffices to establish \eqref{eq: LW1} (under the assumptions of Proposition \ref{Prop: Main.Jet.Can}).

% Next, recall the natural map $\Mor_{e}\left(C,X\right)\to\Pic^{e}\left(C\right)$
% given by pulling back $\mathcal{O}_{X}\left(1\right)$ to $C\times T$ along
% $C\times T\to X\times T$ for a $T$-point of $\Mor_{e}\left(C,X\right)$. This induces a map $J_m\left(\Mor_{e}\left(C,X\right)\right)\to J_m\left(\Pic^e(C)\right).$

% For a point $[\widetilde{L}_m]\in J_m(\Pic^e(C))$, let $J_m\left(\Mor\left(C,X,\widetilde{L}\right)\right)$ denote the fiber of this induced map over this point. Similarly, for a point $[\widetilde{L}_{1,m}]\in J_1(J_m(\Pic^e(C)))$, let $J_1\left(J_m\left(\Mor\left(C,X,\widetilde{L}_{1,m}\right)\right)\right)$ denote the fiber of this induced map over this point.

% Let $F\in \F_q[x_0,\dots, x_n]$ be a form of degree $d$ defining $X$ in $\P^n$.

% Denote by $M_m\left(C,X,\widetilde{L}\right)$ the affine cone of $J_m\left(\Mor\left(C,X,\widetilde{L}_m\right)\right),$ which we mean to denote the variety whose rational points are the same as those of $J_m\left(\Mor\left(C,X,\widetilde{L}_m\right)\right),$ except without quotienting by $\G_{m}.$ In the same way, let us denote by $M_{1,m}\left(C,X,\widetilde{L}_{1,m}\right)$ the affine cone of $J_1\left(J_m\left(\Mor\left(C,X,\widetilde{L}_{1,m}\right)\right)\right).$ 

Let $F\in \F_q[x_0,\dots, x_n]$ be a form of degree $d$ defining $X$ in $\P^n$.
The rational points of $\Mor\left(C,X,L\right)$ can be described explicitly as tuples of the following form:
\[
\left\{ \left(s_{0},\ldots,s_{n}\right)\in H^0(C,L)^{n+1}\colon F\left(s_{0},\ldots,s_{n}\right)=0,s_{0},\ldots,s_{n}\text{ globally generate}\right\} /\G_{m}(\F_q)
\]

Denote by $M\left(C,X,L\right)$ the cone of $\Mor\left(C,X,L\right),$ which we mean to denote the variety whose rational points are the same as those of $\Mor\left(C,X,L\right),$ except without the quotient by $\G_{m}.$

\begin{lem}\label{Le: RepsForJetSchemes}
An $\F_q[t]/t^{m+1}$-point of $M\left(C,X,L\right)$
is given by a tuple ${\vec{x}\in H^0(C,L)^{n+1}\otimes_{\F_q} \F_q[t]/t^{m+1}}$ such that $\vec{x}\bmod t$
globally generates and $F\left(\vec{x}\right)=0$. 

Similarly, $\F_q[t,r]/\wangle{t^{m+1}, r^2}$-points of $M\left(C,X,L\right)$ are given by  
$\vec{x}\in H^0(C,L)^{n+1}\otimes_{\F_q}\F_q[t,r]/\wangle{t^{m+1}, r^2}$ such that $\vec{x}\bmod \wangle{t,r}$
globally generates and $F\left(\vec{x}\right)=0$.
\begin{proof}
We will only prove the first claim; the second goes through identically. An $\F_q[t]/t^{m+1}$-point of $\Mor\left(C,X,L\right)$
is simply a morphism $C\otimes \F_q[t]/t^{m+1}\to X\otimes \F_q[t]/t^{m+1}$ with corresponding line bundle isomorphic to $L\otimes_{\F_q} \F_q[t]/t^{m+1}$,
which is a morphism $C\otimes \F_q[t]/t^{m+1}\to\P_{\F_q[t]/t^{m+1}}^{n}$
that factors through $X\otimes \F_q[t]/t^{m+1}$. 

This is given by an $(n+1)$-tuple of global sections $\vec{x}$ of
$H^{0}\left(C\otimes \F_q[t]/t^{m+1},L\otimes_k \F_q[t]/t^{m+1}\right)$, which is isomorphic to $H^{0}\left(C,L\right)\otimes \F_q[t]/t^{m+1}$, 
that globally generates and factors through $X\otimes \F_q[t]/t^{m+1}$. Suppose that $\vec{x}=(x_0,\dots, x_n)$. 
To globally generate means that the map of sheaves
\[
\left(x_{0},\ldots,x_{n}\right)\colon \mathcal{O}_{C}^{n+1}\otimes \F_q[t]/t^{m+1}\to L\otimes \F_q[t]/t^{m+1}
\]
is surjective. 

Suppose $\vec{x}\bmod t$ globally generates. Then, working on the level of stalks, we may assume $x_{0},\ldots,x_{n}\in R[t]/t^{m+1}$ where $R=\mathcal{O}_{C,p}$ at some point $p\in C$ such that $\wangle{x_{0}\bmod t,\ldots,x_{n}\bmod t}=R.$ Since $C$ is smooth, note that $R$ is a discrete valuation ring. 

This implies that we can find $a_{0},\ldots,a_{n}\in R$ such that
$a_{0}x_{0}+\cdots+a_{n}x_{n}=1+tr'$ for some $r'\in R$. However, $1+tr'$
is invertible in $R[t]/t^{m+1}$ and hence $\wangle{x_{0},\ldots,x_{n}}= R[t]/t^{m+1}$, as desired.
\end{proof}
\end{lem}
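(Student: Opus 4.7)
The plan is to unwind the definition of the functor $M(C,X,L)$ at $R$-valued points and then reduce the global-generation condition to a Nakayama-type argument. I would prove both claims in parallel, letting $R$ stand for either $\F_q[t]/t^{m+1}$ or $\F_q[t,r]/\wangle{t^{m+1},r^2}$, and letting $\mathfrak{n}\subset R$ denote the nilpotent ideal $(t)$ or $(t,r)$ respectively. By definition, an $R$-point of $M(C,X,L)$ is a morphism $C_R:=C\otimes_{\F_q}R\to \A^{n+1}_{\F_q}$ landing in the affine cone over $X$, equipped with a line bundle on $C_R$ isomorphic to $L_R:=L\otimes_{\F_q}R$. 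Equivalently, it is an $(n+1)$-tuple of sections of $L_R$ that globally generate and on which $F$ vanishes.

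Next I would identify the space of sections. Since $R$ is free (hence flat) over $\F_q$, flat base change for cohomology gives a canonical isomorphism $H^0(C_R,L_R)\cong H^0(C,L)\otimes_{\F_q} R$. Under this identification, the tuple of global sections becomes an element $\vec x\in H^0(C,L)^{n+1}\otimes_{\F_q} R$, and the condition $F(\vec x)=0$ translates directly (the evaluation of $F$ is $R$-linear in the appropriate sense because $F$ has coefficients in $\F_q$).

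The only nontrivial step is to check that global generation of $\vec x$ on $C_R$ is equivalent to global generation of $\vec x\bmod\mathfrak{n}$ on $C$. One direction is clear (restriction mod $\mathfrak{n}$ of a surjection is a surjection). For the converse, I would work locally: fix a closed point $p\in C$, trivialize $L$ on a neighborhood so that the $x_i$ become elements of $\mathcal{O}_{C,p}\otimes_{\F_q}R$, and use that $C$ is smooth so that $\mathcal{O}_{C,p}$ is a DVR. If $(x_0\bmod\mathfrak n,\dots,x_n\bmod\mathfrak n)$ generate the unit ideal in $\mathcal{O}_{C,p}$, pick a lift $a_0,\dots,a_n\in \mathcal{O}_{C,p}$ of a relation witnessing this, so that $a_0x_0+\cdots+a_nx_n=1+\eta$ with $\eta\in\mathfrak{n}\cdot(\mathcal{O}_{C,p}\otimes R)$. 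Since $\mathfrak n$ is nilpotent in $R$, the element $1+\eta$ is a unit, giving $\wangle{x_0,\dots,x_n}=\mathcal{O}_{C,p}\otimes R$ and hence global generation at $p$.

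I do not anticipate a serious obstacle: the base change identification is standard, and the Nakayama-style argument is routine once one notes that $\mathfrak{n}$ is nilpotent in both choices of $R$. The main thing to be slightly careful about is ensuring that the argument applies uniformly at every closed point of $C$ (which it does, since the Nakayama step is pointwise) and that the two cases of the lemma genuinely go through identically, the only change being the replacement of the nilpotent ideal $(t)$ by $(t,r)$.
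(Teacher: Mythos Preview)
Your proposal is correct and follows essentially the same approach as the paper: identify sections via flat base change, then verify the global-generation condition stalk-locally by lifting a unit relation and using that the relevant ideal is nilpotent. Your uniform treatment of both cases via the nilpotent ideal $\mathfrak{n}$ is a minor stylistic improvement, but the argument is otherwise identical to the paper's.
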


In particular, from the lemma, it follows that the rational points of $J_m\left(\Mor\left(C,X,L\right)\right)$ comprise tuples of the form
\[
\left\{ \left(s_{0},\ldots,s_{n}\right)\in \frac{ H^0(C,L)^{n+1}[t]}{\wangle{t^{m+1}}}\colon \begin{array}{l} F\left(s_{0},\ldots,s_{n}\right)=0, \\ s_{0},\ldots,s_{n}\text{ glob. gen. mod $t$}\end{array}\right\} /\G_{m}\left(\F_q[t]/t^{m+1}\right),
\]
% \[
% \left\{ \left(s_{0},\ldots,s_{n}\right)\in H^0(C,L)^{n+1}[t]/\wangle{t^{m+1}}\colon F\left(s_{0},\ldots,s_{n}\right)=0,s_{0},\ldots,s_{n}\text{ glob. gen. mod $t$}\right\} /\G_{m}(\F_q[t]/t^{m+1})
% \]
and the rational points of $J_1\left(J_m\left(\Mor\left(C,X,L\right)\right)\right)$ comprise tuples of the form
\[
\left\{ \left(s_{0},\ldots,s_{n}\right)\in \frac{ H^0(C,L)^{n+1}[t,r]}{\wangle{t^{m+1},r^2}}\colon \begin{array}{l} F\left(s_{0},\ldots,s_{n}\right)=0, \\ s_{0},\ldots,s_{n}\text{ glob. gen. mod $t,r$}\end{array}\right\} /\G_{m}\left(\F_q[t,r]/\wangle{t^{m+1},r^2}\right),
\]

where $H^0(C,L)^{n+1}[t]/\wangle{t^{m+1}}$ is simply the tensor product $H^0(C,L)^{n+1}\otimes_{\F_q} \F_q[t]/t^{m+1}$ and likewise for $H^0(C,L)^{n+1}[t,r]/\wangle{t^{m+1}, r^2}$.

Then, in the same vein as for $M(C,X,L)$, for any $[L]\in \Pic^e(C)$, we denote by $M_m(C,X,L)$ and $M_{1,m}(C,X,L)$ the cones of $J_m(\Mor(C,X,L))$ and $J_1(J_m(\Mor(C,X,L)))$, respectively, i.e. whose rational points are the same except without the quotient by $J_m(\G_m)$, respectively $J_1(J_m(\G_m))$.

% In the notation of the lemma and from this description, it is evident that the choice of $[\widetilde{L}_m]$ in $J_m(\Pic^e(C))$ lying above any specific $[L]\in \Pic^e(C)$ does not matter, i.e. for any $\widetilde{L}_m$, we have $\#M_m(C,X,\widetilde{L}_m)(\F_q)=\#M_m(C,X,L\otimes \F_q[t]/t^{m+1})(\F_q)$ and similarly $\#M_{1,m}(C,X,\widetilde{L}_{1,m})(\F_q)=\#M_{1,m}\left(C,X,L\otimes \F_q[t,r]/\left\langle t^{m+1},r^2\right\rangle\right)(\F_q)$.

Finally, then, to verify inequalities \eqref{eq: LW1} and \eqref{eq: LW2}, it suffices to show the following:

\begin{equation}\label{eq: actualLW1}
    \lim_{q\to\infty}q^{-(m+1)(\mu+1)}\#M_m\left(C,X,L\right)\left(\F_{q}\right)\le1
\end{equation}
and 
\begin{equation}\label{eq: actualLW2}
\lim_{q\to\infty}q^{-2(m+1)(\mu+1)}\#M_{1,m}\left(C,X,L\right)\left(\F_{q}\right)\le1.    
\end{equation}

\section{\label{sec:settingupcircle}Setup of the circle method}

Let us recall from \cite{haseliu2024higher} the geometric interpretation of the circle method that will allow us to work more naturally in the setting of smooth projective curves with arbitrary genus.

Fix a finite field $\F_q$ and a line bundle $L$ on $C$ of degree $e$. Throughout this paper, we assume that $e \ge 2g-1$ so that $L$ is always non-special. For any non-negative integer $r$ and effective divisor $Z\subset C$, let us write $P_{re,C}$ to denote $H^0\left(C,L^{\otimes r}\right)$, $P_{re,Z}$ to denote $H^0\left(Z,L^{\otimes r}|_Z\right)$, and $P_{re-Z,C}$ to denote $H^0\left(C,L^{\otimes r}(-Z)\right)$. Moreover, if $m\ge 0$ is an integer, let $P_{re,C,m}$ denote $P_{re,C}\otimes \F_q[t]/t^{m+1}$, $P_{re, Z, m}$ denote $P_{re,C} \otimes \F_q[t]/t^{m+1}$, and $P_{re-Z,C,m}$ denote $P_{re-Z,C}\otimes \F_q[t]/t^{m+1}$.

The ``circle'' in the circle method is given by elements of $P_{de,C,m}^\vee\cong P_{de,C}^\vee \otimes \F_q[t]/t^{m+1}$. To relate this to the perspective of harmonic analysis over function fields, we refer the reader to Section~6 of \cite{haseliu2024higher}, which uses a Serre duality argument to show that linear functionals on $P_{de,C}$ are the same as a certain quotient of the completion of $\F_q(t)$ with respect to the infinite place. 

We say $\alpha \in P_{de,C}^\vee$ factors through an effective divisor $Z\subset C$ if $\alpha$ is determined by its restriction to $Z$, i.e. $\alpha$ lies in the image of the canonical map $P_{de,Z}^\vee \to P_{de,C}^\vee$. We denote this by $\alpha \sim Z$. For general $m\ge 0$, we say $\alpha\in P_{de,C,m}^\vee$ factors through $Z$ if the restriction of $\alpha$ to $P_{de,C}^\vee$ factors through $Z$, i.e. if $\alpha \bmod t$ factors through $Z$. Again, we write $\alpha \sim Z$. We also define $\deg \alpha$ to be the smallest integer $b$ for which there exists an effective divisor $Z$ of degree $b$ and $\alpha \sim Z$.

The analogue of Dirichlet's approximation theorem in the geometric setting is given by Lemma~9 of \cite{haseliu2024higher}:
\begin{lem}\label{Le: DirichletApprox}
    Any $\alpha\in P_{de,C}^\vee$ factors through an effective divisor of degree at most $de/2+1$.
\end{lem}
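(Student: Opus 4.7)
The plan is to produce $Z$ as the zero divisor of a carefully chosen nonzero section of a line bundle of degree about $de/2$. Set $b := \lfloor de/2 \rfloor + 1$, so the goal is an effective divisor of degree $b$. First reformulate the factoring condition: the image of the canonical map $P_{de,Z}^\vee \to P_{de,C}^\vee$ equals the annihilator of $P_{de-Z,C}$ (the kernel of the restriction map $P_{de,C} \to P_{de,Z}$), so $\alpha \sim Z$ iff $\alpha$ vanishes on $H^0(C, L^{\otimes d}(-Z))$.

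The key observation is that for any line bundle $\mathcal{M}$ on $C$ and any nonzero $s \in H^0(C, \mathcal{M})$ with zero divisor $Z_s$, we have $\mathcal{M} \cong \O_C(Z_s)$ and multiplication by $s$ yields an isomorphism
\[
H^0(C, L^{\otimes d} \otimes \mathcal{M}^{-1}) \xrightarrow{\ \sim\ } H^0(C, L^{\otimes d}(-Z_s)), \qquad t \mapsto s \cdot t.
\]
Hence $\alpha \sim Z_s$ iff $\alpha(s \cdot t) = 0$ for every $t \in H^0(C, L^{\otimes d} \otimes \mathcal{M}^{-1})$. Now fix any line bundle $\mathcal{M}$ of degree $b$ on $C$ (under $e \ge 2g-1$ and $d \ge 2$ we have $b \ge 2g$, so $\mathcal{M}$ is automatically non-special), and set $\mathcal{N} := L^{\otimes d} \otimes \mathcal{M}^{-1}$, of degree $de - b \le b - 1$. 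Any nonzero $s$ in the kernel of the $\F_q$-linear map
\[
M_\alpha \colon H^0(C, \mathcal{M}) \to H^0(C, \mathcal{N})^\vee, \qquad s \mapsto \bigl(t \mapsto \alpha(s \cdot t)\bigr)
\]
produces an effective divisor $Z_s$ of degree $b \le de/2 + 1$ with $\alpha \sim Z_s$, so it suffices to prove $\ker M_\alpha \ne 0$, equivalently $h^0(\mathcal{M}) > h^0(\mathcal{N})$.

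This dimension inequality is a short Riemann--Roch/Clifford comparison: non-speciality of $\mathcal{M}$ gives $h^0(\mathcal{M}) = b - g + 1$. For $\mathcal{N}$, either $\mathcal{N}$ is non-special, in which case $h^0(\mathcal{N}) = de - b - g + 1 < h^0(\mathcal{M})$ because $2b > de$; or $\mathcal{N}$ is special, in which case either $h^0(\mathcal{N}) = 0$ or Clifford's inequality gives $h^0(\mathcal{N}) \le (de-b)/2 + 1$, both of which fall strictly below $b - g + 1$ given $de \ge 4g - 2$ (a consequence of $d \ge 2$ and $e \ge 2g - 1$). The main conceptual content is the identification $\alpha \sim Z_s \iff s \in \ker M_\alpha$ via multiplication by $s$; the rest is a routine dimension count, so I expect no serious obstacle beyond careful bookkeeping in the borderline regime where $\mathcal{N}$ is special.
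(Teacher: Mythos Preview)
Your argument is correct. The paper does not actually prove this lemma; it simply cites it as Lemma~9 of \cite{haseliu2024higher}, so there is no in-paper proof to compare against. Your approach---realizing $Z$ as the zero divisor of a nonzero section in the kernel of the pairing
\[
H^0(C,\mathcal{M}) \longrightarrow H^0(C,L^{\otimes d}\otimes\mathcal{M}^{-1})^\vee,\qquad s \longmapsto \bigl(t \mapsto \alpha(s\cdot t)\bigr),
\]
with nontriviality of the kernel forced by a Riemann--Roch/Clifford dimension count---is a clean, self-contained geometric version of the pigeonhole principle behind Dirichlet approximation. The verification that $(de-b)/2+1<b-g+1$ in the special case indeed goes through under $de\ge 4g-2$, since $3b\ge 3\lfloor de/2\rfloor+3>de+2g$.

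One small point you leave implicit: you need a line bundle $\mathcal{M}$ of the prescribed degree $b$ to exist on $C$. Over $\C$ this is obvious, but the lemma is invoked over $\F_q$; there it follows from F.~K.~Schmidt's theorem that a smooth projective geometrically irreducible curve over a finite field has a divisor of degree one, hence line bundles of every degree. It is worth a word since you are not over an algebraically closed field.
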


Let us write $\alpha \sim_{\min} Z$ if $\alpha$ factors minimally through an effective divisor $Z$.

Lemma 10 of \cite{haseliu2024higher} implies the following:

\begin{lem}\label{Le: MajorDisjoint}
    There is at most one effective divisor $Z$ such that $\alpha \sim_{\min} Z$ and $\deg Z \le e-2g+1$.
\end{lem}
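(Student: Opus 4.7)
The plan is to argue by contradiction. Suppose there are two distinct effective divisors $Z_1 \neq Z_2$ with $\alpha \sim_{\min} Z_i$ and $\deg Z_i \leq e - 2g + 1$ for $i = 1, 2$. Since $Z_1 \neq Z_2$, the pointwise minimum divisor $Z_1 \wedge Z_2$ is a proper sub-divisor of at least one of them, so by minimality $\alpha$ cannot factor through $Z_1 \wedge Z_2$. The aim is to contradict this by showing $\alpha \sim Z_1 \wedge Z_2$.

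Unwinding the definitions, $\alpha \sim Z$ is equivalent to $\alpha$ annihilating the kernel $P_{de-Z,C} = H^0(C, L^{\otimes d}(-Z))$ of the restriction map $P_{de,C} \to P_{de,Z}$. Hence the desired claim reduces to the subspace identity
\[
H^0(C, L^{\otimes d}(-Z_1)) + H^0(C, L^{\otimes d}(-Z_2)) = H^0(C, L^{\otimes d}(-(Z_1 \wedge Z_2))),
\]
since $\alpha$ already annihilates both summands on the left by hypothesis. To establish this, I would tensor the standard Mayer--Vietoris short exact sequence
\[
0 \to \mathcal{O}_C(-(Z_1 \vee Z_2)) \to \mathcal{O}_C(-Z_1) \oplus \mathcal{O}_C(-Z_2) \to \mathcal{O}_C(-(Z_1 \wedge Z_2)) \to 0,
\]
where $Z_1 \vee Z_2$ is the pointwise maximum, by $L^{\otimes d}$ and take the long exact sequence in cohomology.

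Surjectivity on $H^0$ follows from the vanishing $H^1(C, L^{\otimes d}(-(Z_1 \vee Z_2))) = 0$, which holds provided $\deg L^{\otimes d}(-(Z_1 \vee Z_2)) \geq 2g-1$. Using $\deg(Z_1 \vee Z_2) \leq \deg Z_1 + \deg Z_2 \leq 2(e - 2g + 1)$, this degree is at least $(d-2)e + 4g - 2$, which exceeds $2g-1$ throughout the parameter range of the main theorems (the degenerate case $d=2$, $g=0$ is already excluded from Theorems \ref{Thm: Main.TermSing} and \ref{Thm: Main.CanSing}). With the subspace identity in hand, $\alpha$ annihilates the right-hand side and hence factors through $Z_1 \wedge Z_2$, contradicting the minimality of $Z_1$ (or $Z_2$). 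The only potentially delicate point is verifying the degree bound at the boundary, but this is comfortable in the relevant regimes, so I expect no serious obstacle.
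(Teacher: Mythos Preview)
Your argument is correct, and since the paper does not give its own proof but merely cites Lemma~10 of \cite{haseliu2024higher}, your self-contained Mayer--Vietoris argument is in fact more explicit than what appears here. The reduction of $\alpha\sim Z$ to the vanishing condition on $H^0(C,L^{\otimes d}(-Z))$ is exactly right, and the $H^1$-vanishing you need follows from the degree estimate $(d-2)e+2g\ge 1$, which holds whenever $d\ge 3$ (using $e\ge 1$) or $g\ge 1$.

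Your caveat about $d=2$, $g=0$ is not merely a technicality: in that case the lemma as literally stated can genuinely fail (for a generic $\alpha\in P_{2e,\P^1}^\vee$ one finds a one-parameter family of minimal degree-$(e+1)$ divisors through which it factors), so the exclusion is essential rather than just convenient. Since this case is outside the scope of Theorems~\ref{Thm: Main.TermSing} and~\ref{Thm: Main.CanSing}, your treatment is entirely adequate for the paper's purposes.
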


Note that we can view the
form $F$ whose zero locus is the hypersurface $X$ as a function
\[
F\colon H^{0}\left(C,L\right)\otimes_{\F_{q}}\F_{q}[t]/t^{m+1}\to H^{0}\left(C,L^{\otimes d}\right)\otimes_{\F_{q}}\F_{q}[t]/t^{m+1}
\] given by the cup product, and the composition with $\alpha\in P_{de,C,m}^{\vee}$ is simply
\[
H^{0}\left(C,L\right)\otimes_{\F_{q}}\F_{q}[t]/t^{m+1}\to H^{0}\left(C,L^{\otimes d}\right)\otimes_{\F_{q}}\F_{q}[t]/t^{m+1}\to\F_{q}\otimes_{\F_{q}}\F_{q}[t]/t^{m+1}.
\]

Moreover, if $\alpha=\alpha_{0}+\alpha_{1}t+\cdots+\alpha_{m}t^{m}$
and $y_{0}+y_{1}t+\cdots+y_{m}t^{m}\in H^{0}\left(C,L^{\otimes d}\right)\otimes_{\F_{q}}\F_{q}[t]/t^{m+1}$,
then we define
\[
\alpha(y)\coloneqq\alpha_{0}\left(y_{0}\right)+\left(\alpha_{0}\left(y_{1}\right)+\alpha_{1}\left(y_{0}\right)\right)t+\cdots+\left(\alpha_{0}\left(y_{m}\right)+\cdots+\alpha_{m}\left(y_{0}\right)\right)t^{m}\in\F_{q}[t]/t^{m+1}.
\]

Finally, let us fix $\psi_{m}\colon\F_{q}[t]/t^{m+1}\to\C^{\times}$ to be the non-trivial
additive character given by 
\[
a_{0}+\cdots+a_{m}t^{m}\mapsto\prod_{i=0}^{m}\psi\left(a_{i}\right),
\]
where $\psi$ is a non-trivial additive character on $\F_{q}$. 
\subsection{Setup for $J_m(\Mor(C,X,L))$}

In the classical circle method, we express our counting question in terms of an integral of exponential sums, parametrized by the circle. Using the language of the geometric interpretation, for a given $\alpha \in P_{de,C,m}^\vee$, we define the associated exponential sum as follows. 

Let 
\begin{equation}\label{Eq: Def.S(alpha)}
S(\alpha)=\sum_{\substack{\vec{x}\in P_{e,C,m}^{n+1} \\ \vec{x}\text{ glob. gen.}}}\psi_{m}\left(\alpha\left(F(\vec{x})\right)\right).
\end{equation}

The condition ``$\vec x$ globally generates'' is shorthand for the condition ``$\vec x \bmod t$ globally generates $L$.'' %Note that by Lemma \ref{Le: RepsForJetSchemes}, the condition ``$\vec x$ globally generates $L \otimes \F_q[t]/t^{m+1}$'' is the same as the condition ``$\vec x \bmod t$ globally generates $L$.''

Orthogonality of characters then gives \[\sum_{\alpha\in P_{de,C,m}^{\vee}}\psi_{m}\left(\alpha\left(F(\vec{x})\right)\right)\\
  =\begin{cases}
0 & \text{if }F(\vec{x})\ne0,\\
\#P_{de,C,m}^{\vee} & \text{else,}
\end{cases}\]
which, by switching sums, implies
\[
\sum_{\alpha\in P_{de,C,m}^{\vee}}S(\alpha) 
 =\sum_{\substack{\vec{x}\in P_{e,C,m}^{n+1} \\ \vec{x} \text{ glob. gen.}}}\sum_{\alpha\in P_{de,C,m}^{\vee}}\psi_{m}\left(\alpha\left(F(\vec{x})\right)\right)
  =\#P_{de,C,m}^{\vee}\#M_m\left(C,X,L\right)(\F_{q}).
\]

Rearranging and applying Riemann-Roch, we obtain
\[
\#M_m\left(C,X,L\right)(\F_{q})=q^{-\left(m+1\right)\left(de-g+1\right)}\sum_{\alpha\in P_{de,C,m}^{\vee}}S(\alpha),
\]
which, according to our goal \eqref{eq: actualLW1}, implies that it suffices to verify
\begin{equation}\label{Eq: Goal.ExpSums.Can}
q^{-(m+1)(n+1)(e+1-g)}\sum_{\alpha\in P_{de,C,m}^{\vee}}S(\alpha)\to1.    
\end{equation}

Let us refine this goal slightly. In the classical circle method, the circle is divided into ``major'' and ``minor'' arcs reflecting the expected size of the exponential sums. In particular, we will show the following.
\begin{prop}\label{Prop: MajorArcGoal.J1}
    Let $m\geq 1$. Then, 
    \[
    q^{-(m+1)(n+1)(e+1-g)}\sum_{\deg Z \leq e-2g+1}\sum_{\alpha \sim_{\text{min}}Z}S(\alpha)= q^{-m(n+1)(e+1-g)}\sum_{\alpha \in P_{de,C, m-1}^\vee}S(\alpha).
    \]
\end{prop}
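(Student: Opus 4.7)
By Lemma~\ref{Le: MajorDisjoint}, the double sum on the left collapses into a single sum over those $\alpha = \alpha_0 + \alpha_1 t + \cdots + \alpha_m t^m \in P_{de,C,m}^\vee$ for which $\alpha_0$ has $\deg \alpha_0 \le e - 2g + 1$. Writing $F(\vec x) = F_0 + F_1 t + \cdots + F_m t^m \in P_{de,C,m}$ and expanding the product in $\F_q[t]/t^{m+1}$, one checks that $\psi_m\bigl(\alpha(F(\vec x))\bigr) = \psi\bigl(\sum_{j=0}^m \alpha_j(G_j(\vec x))\bigr)$, where $G_j \coloneqq F_0 + F_1 + \cdots + F_{m-j}$. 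The plan is to execute the sums in $S(\alpha)$ in a particular order: first $\alpha_1, \ldots, \alpha_m$, then $\vec x_m$, then $\alpha_0$.

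\textbf{First two orthogonalities.} Summing over $\alpha_1, \ldots, \alpha_m \in P_{de,C}^\vee$, orthogonality for the additive characters of $\F_q$ imposes $G_j = 0$ for $1 \le j \le m$, and these constraints telescope to $F_0 = F_1 = \cdots = F_{m-1} = 0$; the residual exponential factor is $\psi(\alpha_0(F_m))$ and the cumulative multiplicative weight is $(\#P_{de,C}^\vee)^m = q^{m(de-g+1)}$. Next, using the Taylor expansion $F_m = \nabla F(\vec x_0) \cdot \vec x_m + H(\vec x_0, \ldots, \vec x_{m-1})$ and summing $\vec x_m \in P_{e,C}^{n+1}$ (unconstrained once $\vec x_0, \ldots, \vec x_{m-1}$ are fixed) yields $q^{(n+1)(e+1-g)} \psi(\alpha_0(H))$ if $\alpha_0 \cdot \partial_i F(\vec x_0) = 0$ in $P_{e,C}^\vee$ for every $i$, and zero otherwise.

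\textbf{Key step via smoothness of $X$.} The heart of the proof is: when $\vec x_0$ defines a morphism $C \to X$ (forced by $F_0 = 0$ together with global generation), the only $\alpha_0$ with $\deg \alpha_0 \le e - 2g + 1$ and $\alpha_0 \cdot \partial_i F(\vec x_0) = 0$ for every $i$ is $\alpha_0 = 0$. Dualizing, this reduces to the statement that for every effective $Z \subset C$ with $\deg Z \le e - 2g + 1$, the composition
\[
\bigoplus_{i=0}^n P_{e,C} \xrightarrow{(s_i) \mapsto \sum_i s_i\, \partial_i F(\vec x_0)} P_{de,C} \twoheadrightarrow P_{de,Z}
\]
is surjective. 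The restriction $\bigoplus P_{e,C} \twoheadrightarrow \bigoplus P_{e,Z}$ is surjective because $H^1(C, L^e(-Z)) = 0$ by non-speciality whenever $\deg Z \le e - 2g + 1$, and the multiplication $\bigoplus P_{e,Z} \to P_{de,Z}$ is surjective by Nakayama, since smoothness of $X$ guarantees some $\partial_i F(\vec x_0(p)) \ne 0$ at each $p \in \Supp Z$. Therefore, summing over $\alpha_0$ and $\vec x_m$ contributes exactly $q^{(n+1)(e+1-g)}$ per tuple $(\vec x_0, \ldots, \vec x_{m-1}) \in M_{m-1}(C,X,L)(\F_q)$, and the claimed identity follows by collecting powers of $q$ via Riemann--Roch. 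The only substantive ingredient is the surjectivity in the key step; everything else is formal character orthogonality.
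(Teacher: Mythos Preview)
Your proof is correct and rests on the same key vanishing mechanism as the paper's: the linear sum over the top coefficient of $\vec{x}$ vanishes unless $\alpha_0=0$, by smoothness of $X$ and global generation. The organization differs, though. The paper writes $\alpha=\alpha_0+t\alpha_1$ with $\alpha_1\in P_{de,C,m-1}^\vee$ and $\vec{x}=\vec{y}+t^m\vec{z}$, sums over $\vec{z}$ first (its Lemma~\ref{Lem: ExpSumVanish.Major.Jm}), and then immediately recognizes what remains as $q^{(n+1)(e-g+1)}\sum_{\alpha_1}S(\alpha_1)$; no appeal to $\#M_{m-1}$ is needed. You instead expand $\alpha$ fully, sum $\alpha_1,\ldots,\alpha_m$ to impose $F_0=\cdots=F_{m-1}=0$, and then re-invoke the setup identity $\sum_\alpha S(\alpha)=\#P_{de,C,m-1}^\vee\cdot\#M_{m-1}$ to reach the right-hand side---an extra unpack/repack step, but perfectly valid. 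Your justification of the key step via surjectivity of $\bigoplus P_{e,C}\to P_{de,Z}$ (using $H^1(L(-Z))=0$ and Nakayama) is a clean dual reformulation of the paper's argument, which instead uses minimality of $Z$ to deduce $\partial_iF(\vec{x}_0)|_Z=0$ and then smoothness plus global generation to force $Z=0$. Two small notational quibbles: you wrote $L^e(-Z)$ where the paper's convention has $L$ itself of degree $e$, so this should read $L(-Z)$; and the key step does not actually need $F_0=0$, only global generation of $\vec{x}_0$ (to ensure $\vec{x}_0(p)\neq 0$ at each $p\in\Supp Z$).
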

\begin{prop}\label{Prop: MinorArcGoal.J1}
    Let $m\geq 1$ and suppose that $n,d,e$ and $g$ satisfy the assumptions of Theorem~\ref{Thm: Main.CanSing}. Then,
    \[
    q^{-(m+1)(n+1)(e+1-g)}\sum_{\deg \alpha > e-2g+1}S(\alpha)\to 0 \quad \text{as }q\to\infty.
    \]
\end{prop}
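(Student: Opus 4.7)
The plan is to establish the standard function-field circle-method minor-arc decay --- Weyl differencing followed by a geometric point count --- adapted to the truncated coefficient ring $\F_q[t]/t^{m+1}$, following the strategy of \cite{BrowningSawinFree} and \cite{haseliu2024higher}. The guiding principle is to keep $S(\alpha)$ as a single exponential sum parametrised by the $\F_q[t]/t^{m+1}$-valued functional $\alpha$, rather than Weil-restricting along $\F_q[t]/t^{m+1}\to\F_q$ into $m+1$ separate sums, so that after $(d-1)$-fold differencing the resulting multilinear form is still valued in the truncated ring and the auxiliary geometric bounds need only be invoked once.

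First, I would iterate Cauchy--Schwarz and the squaring-plus-translation trick $d-1$ times. Since $\operatorname{char}\F_q>d$, polarisation is valid, so after $d-1$ rounds the phase becomes $\alpha$ applied to the symmetric multilinear form $\Phi_F$ polarising $d!\,F$, evaluated at differences $\vec h_1,\dots,\vec h_{d-1}\in P_{e,C,m}^{n+1}$ and a running variable $\vec x\in P_{e,C,m}^{n+1}$. The global-generation constraint can be removed at this stage by Möbius inversion, at the cost of a lower-order error. Since the resulting phase is $\F_q[t]/t^{m+1}$-linear in $\vec x$, orthogonality of characters collapses the inner sum over $\vec x$, leaving a bound of the shape
\[
|S(\alpha)|^{2^{d-1}} \ll q^{(2^{d-1}-1)(m+1)(n+1)(e-g+1)}\cdot N(\alpha),
\]
where $N(\alpha)$ counts $(d-1)$-tuples $(\vec h_1,\dots,\vec h_{d-1})\in P_{e,C,m}^{(n+1)(d-1)}$ for which the linear functional $\vec x\mapsto\alpha(\Phi_F(\vec h_1,\dots,\vec h_{d-1},\vec x))$ on $P_{e,C,m}^{n+1}$ vanishes identically.

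The heart of the argument is bounding $N(\alpha)$. Applying the geometric Dirichlet approximation coordinate-by-coordinate, the tuples counted by $N(\alpha)$ either force $\alpha$ itself to factor through an effective divisor of small degree (constrained by Lemmas~\ref{Le: DirichletApprox} and \ref{Le: MajorDisjoint}), or else make the partial contractions of $\Phi_F$ land in a rank-deficient stratum. Smoothness of $X$ makes the latter locus have codimension of order $n/2^{d-2}$ inside $(\A^{n+1})^{d-1}$, so combining the two alternatives gives a bound of the form $N(\alpha)\le q^{(m+1)\cdot(\cdots)}$ whose exponent depends on $\deg\alpha$. For $\deg\alpha>e-2g+1$, the pigeonhole built into Dirichlet approximation remains genuinely constraining, so $N(\alpha)$ is strictly below its trivial size. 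Summing the resulting pointwise bound on $|S(\alpha)|$ over all minor-arc $\alpha$, stratified by $\deg\alpha=b$ for $b\in(e-2g+1,\,de/2+1]$, and counting each stratum as having $\ll q^{(m+1)(b-g+1)}$ functionals via Riemann--Roch, the total is compared against $q^{(m+1)(n+1)(e-g+1)}$; the resulting inequality closes precisely when $n$ satisfies the thresholds of Theorem~\ref{Thm: Main.CanSing}.

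The main obstacle I anticipate is bookkeeping: every quantity --- the trivial size of $S(\alpha)$, the count of $\alpha$ of given degree, the $2^{d-1}$-th root loss from Weyl differencing, and the geometric bound on $N(\alpha)$ --- scales linearly in $m+1$ in the exponent of $q$. Engineering this scaling to match on both sides so that the admissible range for $n$ comes out \emph{independent} of $m$ is exactly the benefit of not Weil-restricting, but has to be verified step by step; in particular, removing the global-generation condition, polarising $F$, and transferring from the vanishing count $N(\alpha)$ to a factoring condition on $\alpha$ must each be checked to respect this linear scaling.
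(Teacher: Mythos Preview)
Your outline captures the broad architecture (Weyl differencing, then a geometric count, then sum over degree strata), but it misses the paper's central technical device and contains a concrete bookkeeping error.

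The paper does \emph{not} perform $d-1$ rounds of differencing on $S(\alpha)$ directly. Instead it first writes $\vec x=\vec y+t^{m'}\vec z$ with $m'=\lceil(m+1)/2\rceil$, so that $F(\vec x)=F(\vec y)+t^{m'}\vec z\cdot\nabla F(\vec y)$ and global generation depends only on $\vec y\bmod t$. One then differences $d-2$ times in $\vec y$ and evaluates the resulting sum, which is genuinely linear in the \emph{unconstrained} variable $\vec z$, by orthogonality; this yields Lemma~\ref{Le: WeylDiff.S(alpha)} with exponent $2^{d-2}$ and counting function $N^{(m-m')}(\alpha)$. The Remark after that lemma explains exactly why your alternative fails: if you difference $d-1$ times in $\vec x$, the innermost linear sum is over $\vec x$ subject to a translate-globally-generates constraint, and orthogonality of characters does not apply cleanly. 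Your suggestion to fix this by M\"obius inversion ``at the cost of a lower-order error'' is not right either: M\"obius gives an exact decomposition over effective divisors $Z'$, and for each $Z'$ the summation set is $P_{e-Z',C}^{n+1}\times P_{e,C}^{n+1}\times\cdots$ (only the constant $t$-coefficient is restricted), which is not of the form $P_{e',C,m}^{n+1}$; carrying this asymmetry through differencing and the shrinking lemma is exactly the complication the splitting trick is designed to avoid.

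Two further gaps: first, your count of functionals with $\deg\alpha=b$ is wrong. By definition $\alpha\sim Z$ means only $\alpha\bmod t$ factors through $Z$, so the higher $t$-coefficients $\alpha_1,\dots,\alpha_m$ are unconstrained in $P_{de,C}^\vee$; together with $O(q^b)$ divisors of degree $b$ this gives $O\!\bigl(q^{\,2b+m(de-g+1)}\bigr)$ functionals, not $q^{(m+1)(b-g+1)}$. Second, your description of bounding $N(\alpha)$ (``Dirichlet approximation coordinate-by-coordinate'' and a ``rank-deficient stratum'' of codimension $n/2^{d-2}$) is not the mechanism used. The paper applies a shrinking lemma (Lemma~\ref{Le: DavShrinking}) to pass from $N^{(k)}(\alpha)$ to $N_s^{(k)}(\alpha)$, then chooses $s$ so that Lemma~\ref{Le: DiophApprox=0} forces the multilinear forms $\Psi_j$ to vanish identically, and finally bounds the resulting count via the jet-scheme dimension estimate $\dim J_k(V)\le(k+1)(d-1)(n+1)-(n+1)(\lfloor k/(d-1)\rfloor+1)$ of Lemma~\ref{Le: DimEstimate}. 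It is precisely this last floor term, combined with the correct count of $\alpha$, that makes the final inequality $n+1>A$ close uniformly in $m$ under the thresholds of Theorem~\ref{Thm: Main.CanSing}.
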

Motivated by this decomposition above, we say $\alpha \in P_{de,C, m}^\vee$ is \emph{major} if $\deg \alpha \leq e-2g+1$ and \emph{minor} otherwise. We will prove Propositions~\ref{Prop: MajorArcGoal.J1} and Propositions~\ref{Prop: MinorArcGoal.J1} in Sections~\ref{Sec: Major.M(m)} and \ref{Sec: PuttingEverythingTogether.Can}, respectively. Assuming the validity of the results, we can verify \eqref{eq: actualLW1} and hence complete the proof of Theorem~\ref{Thm: Main.CanSing}.
\begin{proof}[Proof of Theorem~\ref{Thm: Main.CanSing}]
    We will proceed by induction on $m$. The case $m=0$ holds by \cite{BrowningSawinFree} for $g=0$ and by \cite{haseliu2024higher} for $g \geq 1$ under even weaker assumptions on $n$ and $e$. So suppose $m\geq 1$ and that \eqref{Eq: Goal.ExpSums.Can} holds for $m-1$. We then have 
    \begin{align*}
        q^{-(m+1)(n+1)(e+1-g)}\sum_{\alpha\in P^\vee_{de,C,m}}S(\alpha) & =q^{-(m+1)(n+1)(e+1-g)}\left(\sum_{\deg Z \leq e-2g+1}\sum_{\alpha \sim_{\text{min}}Z}S(\alpha) +\sum_{\deg \alpha > e-2g+1}S(\alpha)\right),
    \end{align*}
    where we used Lemma~\ref{Le: MajorDisjoint} to show that any major $\alpha$ factors through a unique effective divisor. Proposition~\ref{Prop: MinorArcGoal.J1} shows that the contribution from $\deg \alpha > e-2g+1$ tends to $0$ as $q\to \infty$ and by Proposition~\ref{Prop: MajorArcGoal.J1} we have 
    \begin{align*}
        q^{-(m+1)(n+1)(e+1-g)}\sum_{\deg Z \leq e-2g+1}\sum_{\alpha \sim_\text{min}Z}S(\alpha) & = q^{-m(n+1)(e+1-g)}\sum_{\alpha \in P_{de,C, m-1}^\vee}S(\alpha) \to 1 
    \end{align*}
    as $q\to \infty$ by the induction hypothesis. 
\end{proof}
%\begin{equation}\label{Eq: MajorArcGoal.J1}
%q^{-(m+1)(n+1)(e+1-g)}\sum_{\deg Z\le e-2g+1}\sum_{\alpha\sim_{\min}Z}S(\alpha)\to1\quad \text{as }q\to \infty.
%\end{equation}
%and that 

%\begin{equation}\label{Eq: MinorArcGoal.J1}
%q^{-(m+1)(n+1)(e+1-g)}\sum_{\deg \alpha > e-%2g+1}S(\alpha)\to 0.
%\end{equation}
%\todo{I guess this should be $\deg Z$ in the sum? Or we define the degree of $\alpha$ to be the minimal degree among the subschemes it factors through?}

%The first sum is the major arc contribution and the second is the minor arc contribution. We classify the elements (``arcs'') of $\alpha\in P_{de,C,m}^\vee$ as major or minor depending on the size of $\deg \alpha$. If $\deg \alpha \le e-2g+1$, we say $\alpha$ is \emph{major}; else, we say $\alpha$ is \emph{minor}. By Lemma \ref{Le: DirichletApprox}, note that we have the upper bound $\deg \alpha \le de/2 + 1$. 

%We proceed inductively, noting that the result holds for $m=0$ for $g=0$ by \cite{BrowningSawinFree} and for $g\ge 1$ by \cite{haseliu2024higher}.

\subsection{Setup for $J_1(J_m(\Mor(C,X,L)))$}
The setup for $J_1(J_m(\Mor(C,X,L)))$ is completely analogous to that of $J_m(\Mor(C,X,L))$.
For given $\alpha,\beta\in P_{de,C,m}^{\vee}$, we define 
\begin{equation}\label{Eq: Def.S(alpha,beta)}
S\left(\alpha,\beta\right)=\sum_{\substack{\vec{x}_0, \vec{x}_1 \in P_{e,C,m}^{n+1} \\ \vec{x}_0 \text{ glob. gen.}}}\psi_{m}\left(\alpha\left(F\left(\vec{x}_0\right)+\beta\vec{x}_1\cdot\nabla F\left(\vec{x}_0\right)\right)\right).
\end{equation}

Here, the condition ``$\vec x$ globally generates'' is again shorthand for the condition ``$\vec x \bmod t$ globally generates $L$.''

%Again, note that by Lemma \ref{Le: RepsForJetSchemes}, the condition ``$\vec x$ globally generates $L \otimes \F_q[t,r]/\left\langle t^{m+1}, r^2\right\rangle$'' is the same as the condition ``$\vec x \bmod (t,r)$ globally generates $L$.''

By orthogonality of characters, we have 
\[\sum_{\alpha,\beta\in P_{de,C,m}^{\vee}}\psi_{m}\left(\alpha\left(F\left(\vec{x}_0\right)+\beta\vec{x}_1\cdot\nabla F\left(\vec{x}_0\right)\right)\right)=\begin{cases}
\left(\#P_{de,C,m}^{\vee}\right)^{2} & \text{if }F\left(\vec{x}_0\right)=\vec{x}_1\cdot\nabla F\left(\vec{x_{0}}\right)=0,\\
0 & \text{else,}
\end{cases}\]
which, by switching sums, implies  
\begin{align*}
\sum_{\alpha,\beta\in P_{de,C,m}^{\vee}}S\left(\alpha,\beta\right) 
&=\sum_{\substack{\vec{x}_0, \vec{x}_1 \in P_{e,C,m}^{n+1} \\ \vec{x}_0 \text{ glob. gen.}}}\begin{cases}
\left(\#P_{de,C,m}^{\vee}\right)^{2} & \text{if }F\left(\vec{x}_0\right)
=\vec{x}_1\cdot\nabla F\left(\vec{x}_0\right)=0,\\
0 & \text{else}
\end{cases}\\
  &=\left(\#P_{de,C,m}^{\vee}\right)^{2}\#M_{1,m}\left(C,X,L\right)(\F_q).
\end{align*}
To justify the second step, observe that if $\vec{x}\in H^0(C,L)[t,r]/\langle t^{m+1}, r^2\rangle$, we can uniquely write $\vec{x}=\vec{x}_0+r\vec{x}_1$ with $\vec{x}_0,\vec{x}_1\in P_{e,C, m}^{n+1}$. Then, $F(\vec{x})=0$ if and only if $F(\vec{x}_0)=\vec{x}_1\cdot\nabla F(\vec{x}_0)=0$. 

In particular, by Lemma~\ref{Le: RepsForJetSchemes} we have the identity
\[
\#M_{1,m}\left(C,X,L\right)(\F_q)=q^{-2\left(m+1\right)\left(de-g+1\right)}\sum_{\alpha,\beta\in P_{de,C,m}^{\vee}}S\left(\alpha,\beta\right).
\]
%In the calculation above, we invoke Lemma \ref{Le: RepsForJetSchemes} by expressing $\F_{q}\left[t,r\right]/\left(t^{m+1},r^{2}\right)$-points of $M_{e}\left(C,X,L\right)$ as $\vec{x}_0 + r\vec{x}_1$ with $\vec{x}_0, \vec{x}_1\in P_{de,C,m}^\vee$.  
According to our goal \eqref{eq: actualLW2}, we want to show
\begin{equation}\label{Eq: Goal.J1Jm.Expsums}
q^{-2\left(m+1\right)(n+1)\left(e-g+1\right)}\sum_{\alpha,\beta\in P_{de,C,m}^{\vee}}S\left(\alpha,\beta\right)\to1    
\end{equation}
as $q\to\infty$. More precisely, we will show the following.
\begin{prop}\label{Prop: GoalMajorArc.J1(Jm)}
Let $m\geq 1$. Then,
\begin{align*}
\sum_{\deg Z\le e-2g+1}\sum_{\deg W\le e-2g+1}\sum_{\alpha\sim_{\min}Z}\sum_{\beta\sim_{\min}W}S\left(\alpha,\beta\right)=q^{2(n+1)(e+1-g)}\sum_{\alpha, \beta \in P_{de,C, m-1}^\vee}S(\alpha,\beta).
\end{align*}  
\end{prop}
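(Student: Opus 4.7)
The plan is to mirror the proof of Proposition \ref{Prop: MajorArcGoal.J1} and exploit the decoupling of $\alpha$ and $\beta$ inside $S(\alpha,\beta)$. First, the orthogonality computation preceding the displayed definition of $S(\alpha,\beta)$ forces the expression $\alpha(F(\vec x_0)+\beta\vec x_1\cdot\nabla F(\vec x_0))$ to be read as $\alpha(F(\vec x_0))+\beta(\vec x_1\cdot\nabla F(\vec x_0))$, so that $\psi_m$ splits multiplicatively:
\[
S(\alpha,\beta)=\sum_{\substack{\vec x_0,\vec x_1\in P_{e,C,m}^{n+1}\\\vec x_0\text{ glob.\ gen.}}}\psi_m\bigl(\alpha(F(\vec x_0))\bigr)\,\psi_m\bigl(\beta(\vec x_1\cdot\nabla F(\vec x_0))\bigr).
\]
Switching the order of summation so that the $\alpha$- and $\beta$-sums are innermost, this factorization puts us in a position to perform the major-arc reduction on each side independently.

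Second, I would apply the major-arc reduction to the $\beta$-sum. Because $\vec x_1\cdot\nabla F(\vec x_0)$ is \emph{linear} in $\vec x_1$, the algebraic manipulation underlying Proposition \ref{Prop: MajorArcGoal.J1}---decomposing $\beta=\beta_0+t\tilde\beta$ with $\beta_0\in P_{de,C}^{\vee}$ major and $\tilde\beta\in P_{de,C,m-1}^{\vee}$, using orthogonality in the higher jet components of $\tilde\beta$, and summing out the top jet $\vec x_1^{(m)}$ of $\vec x_1$---goes through essentially trivially and yields
\[
\sum_{\vec x_1\in P_{e,C,m}^{n+1}}\sum_{\beta\text{ major}}\psi_m\bigl(\beta(\vec x_1\cdot\nabla F(\vec x_0))\bigr)=q^{(n+1)(e+1-g)}\sum_{\vec x_1'\in P_{e,C,m-1}^{n+1}}\sum_{\beta'\in P_{de,C,m-1}^{\vee}}\psi_{m-1}\bigl(\beta'(\vec x_1'\cdot\nabla F(\vec x_0'))\bigr),
\]
where $\vec x_0'$ denotes the image of $\vec x_0$ at jet level $m-1$ and the factor $q^{(n+1)(e+1-g)}=\#P_{e,C}^{n+1}$ comes from summing out $\vec x_1^{(m)}$. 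The outer $\alpha,\vec x_0$-sum is then handled by applying Proposition \ref{Prop: MajorArcGoal.J1} essentially verbatim, treating the (now reduced) $\beta$-factor as an inert weight depending only on $\vec x_0'$. This produces a second factor $q^{(n+1)(e+1-g)}$ and drops the $\alpha$-sum and $\vec x_0$-sum to level $m-1$. Reassembling the factorization at level $m-1$ then recognizes the outcome as $q^{2(n+1)(e+1-g)}\sum_{\alpha',\beta'\in P_{de,C,m-1}^{\vee}}S(\alpha',\beta')$.

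The main obstacle is verifying that the major-arc reduction of Proposition \ref{Prop: MajorArcGoal.J1} remains valid when carrying an auxiliary $\vec x_0$-dependent weight. This should hold because that reduction is a pointwise identity on the fibers of the projection $P_{e,C,m}^{n+1}\to P_{e,C,m-1}^{n+1}$: only the top jet $\vec x_0^{(m)}$ is summed out on the $\alpha$-side, and after the first step the $\beta$-weight depends on $\vec x_0$ only through the level-$(m-1)$ image $\vec x_0'$, so the two reductions genuinely commute. Tracking the global generation condition is harmless since it depends only on $\vec x_0^{(0)}$, which is preserved under both reductions.
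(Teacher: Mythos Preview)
Your approach is correct and coincides with the paper's: both arguments write $\alpha=\alpha_0+t\alpha_1$, $\beta=\beta_0+t\beta_1$, split off the top jets $\vec{x}_0^{(m)},\vec{x}_1^{(m)}$, and apply Lemma~\ref{Lem: ExpSumVanish.Major.Jm} twice---first to the $\vec{x}_1^{(m)}$-sum (forcing $W=0$, hence $\beta_0=0$), then to the $\vec{x}_0^{(m)}$-sum (forcing $Z=0$)---the only difference being that the paper writes out the full expansion at once while you organize it sequentially. One small clarification: the vanishing for $W\neq 0$ is not ``orthogonality in the higher jet components of $\tilde\beta$'' but exactly Lemma~\ref{Lem: ExpSumVanish.Major.Jm} applied to $\beta_0$ (using that $\vec{x}_0$ globally generates); the sum over $\tilde\beta$ is simply carried forward and becomes the $\beta'$-sum in the answer.
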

\begin{prop}\label{Prop: GoalMinorArc.J1(Jm)}
Let $m\geq 1$ and suppose that $n,d,e$ and $g$ satisfy the assumptions of Theorem~\ref{Thm: Main.TermSing}. Then,
\[
q^{-2(m+1)(n+1)(e+1-g)}\sum_{\max\{\deg \alpha, \deg \beta\} > e-2g+1}S(\alpha,\beta)\to 0\quad\text{as }q\to\infty. 
\]
\end{prop}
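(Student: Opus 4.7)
The plan is to adapt the Weyl-differencing argument underlying Proposition~\ref{Prop: MinorArcGoal.J1} (for $S(\alpha)$) to the two-variable exponential sum $S(\alpha,\beta)$. The key structural observation is that the phase
\[
\alpha\!\left(F(\vec{x}_0)\right) + \beta\!\left(\vec{x}_1\cdot\nabla F(\vec{x}_0)\right)
\]
is a polynomial in $\vec{x}_0$ of degree $d$ whose top-degree form depends only on $\alpha$ and $F$: the $\beta$-term has degree $d-1$ in $\vec{x}_0$ and is linear in $\vec{x}_1$. Consequently, after applying Weyl differencing $d-1$ times in the variable $\vec{x}_0$ and then summing trivially over the remaining linear variables $\vec{x}_0$ and $\vec{x}_1$ via orthogonality of characters, $|S(\alpha,\beta)|^{2^{d-1}}$ is bounded by $q^A$ times a count $N_{F,\alpha,\beta}$ of $(d-1)$-tuples of differences $(\vec{h}_1,\ldots,\vec{h}_{d-1})$ satisfying two simultaneous multilinear vanishing conditions, one cut out by $\alpha$ applied to the multilinear form associated with $F$ and the other by $\beta$ applied to an analogous form; here $A$ is an explicit normalizing exponent.

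This is a direct two-variable analogue of the exponential-sum bound that drives Proposition~\ref{Prop: MinorArcGoal.J1}. Geometrically, both conditions localize $(\vec{h}_i)$ on a twisted singular locus of $F$, and the existing one-variable analysis gives power-saving control of $N_{F,\alpha,\beta}$ whenever either of $\alpha$ or $\beta$ is minor. Since the hypothesis $\max\{\deg\alpha,\deg\beta\}>e-2g+1$ forces at least one of them to be minor, we obtain a nontrivial bound for each $(\alpha,\beta)$ in the minor arc. Summing this pointwise estimate over all admissible $(\alpha,\beta)$ and comparing with the normalizing factor $q^{2(m+1)(n+1)(e+1-g)}$ yields the required limit under the hypotheses of Theorem~\ref{Thm: Main.TermSing}; the doubling of this exponent (relative to Theorem~\ref{Thm: Main.CanSing}) is precisely what accounts for the extra $\vec{x}_1,\beta$ integration and explains the slightly stronger lower bound on $n+1$.

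The main obstacle is executing the Weyl-differencing bookkeeping rigorously over the non-reduced base ring $\F_q[t]/t^{m+1}$ with the additional $\vec{x}_1,\beta$ variables, and verifying that $N_{F,\alpha,\beta}$ satisfies uniform power-saving bounds whenever at least one of $\alpha,\beta$ is minor. Particular care is needed in the subregime where $\alpha$ is major but $\beta$ is minor, since then all cancellation must be extracted from the $\beta$-term; a natural alternative here is to first perform orthogonality in $\vec{x}_1$, reducing to a character sum over $\vec{x}_0$ supported on the twisted gradient-vanishing locus $V_\beta=\{\vec{x}_0:\beta(\,\cdot\,\partial_i F(\vec{x}_0))=0\text{ for all }i\}$, and then bound $|V_\beta|$ using that $\beta$ minor forces its support to have positive codimension.
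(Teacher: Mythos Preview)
Your broad strategy—Weyl differencing in $\vec{x}_0$ and exploiting that the $\beta$-term has degree $d-1$ in $\vec{x}_0$—matches the paper's, but the execution you describe has a genuine gap. You claim that after $d-1$ differencings in $\vec{x}_0$ one obtains a single count $N_{F,\alpha,\beta}$ satisfying \emph{both} the $\alpha$- and $\beta$-multilinear conditions simultaneously. This does not come out of the argument as stated: to reach $|S(\alpha,\beta)|^{2^{d-1}}$ you must first apply H\"older/Cauchy--Schwarz in $\vec{x}_1$, placing $\vec{x}_1$ \emph{outside} the $2^{d-1}$-th power. After the final $\vec{x}_0$-summation enforces the $\alpha$-condition, the $\beta$-phase is a constant unit and the $\vec{x}_1$-sum is already trivially bounded; no $\beta$-orthogonality is available. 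Conversely, summing over $\vec{x}_1$ first (your last paragraph) restricts $\vec{x}_0$ to the nonlinear locus $V_\beta$, for which you do not explain how to get pointwise savings compatible with the stated threshold on $n$.

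The paper instead proves two \emph{separate} bounds at the $2^{d-2}$ level (Lemma~\ref{Le: WeylDiff.S(alpha,beta)}) and takes their minimum. For the $\alpha$-bound it reuses the $t$-adic splitting $\vec{x}_0=\vec{y}+t^{m'}\vec{z}$ from the $S(\alpha)$ analysis: only $d-2$ differencings in $\vec{y}$ are needed, after which orthogonality in $\vec{z}$ produces the condition $N^{(m-m')}(\alpha)$. For the $\beta$-bound it differences $d-2$ times in $\vec{x}_0$ and \emph{then} brings the full $\vec{x}_1$-sum inside; since $D_{\vec{h}_1,\ldots,\vec{h}_{d-2}}(\vec{x}_1\cdot\nabla F)$ is linear in $\vec{x}_1$, orthogonality now yields $N^{(m)}(\beta)$ directly. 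Both bounds therefore sit at exponent $2^{d-2}$, not your $2^{d-1}$, and this factor of two is exactly what is needed to match the thresholds on $n+1$ in Theorem~\ref{Thm: Main.TermSing}; with $2^{d-1}$ the final quantity $A'$ doubles and the stated hypotheses no longer suffice. You also do not address how the global-generation constraint on $\vec{x}_0\bmod t$ survives repeated squaring; the paper handles this by tracking which translates must generate and dropping the condition only at the final counting step.
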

As in the case of canonical singularities, if $\deg \alpha, \deg \beta \le e-2g+1$, we say the pair $(\alpha, \beta)$ is \emph{major}; else, we say the pair $(\alpha, \beta)$ is \emph{minor}. % By Lemma \ref{Le: DirichletApprox}, note that we have the upper bound $\deg \alpha, \deg \beta \le de/2 + 1$. 
The proof of Propositions~\ref{Prop: GoalMajorArc.J1(Jm)} and \ref{Prop: GoalMinorArc.J1(Jm)} will be carried out in Sections~\ref{Sec: Major.J1(Jm)} and \ref{Sec: PuttingEverythingTogether.Term}, respectively. 

\begin{proof}[Proof of Theorem~\ref{Thm: Main.TermSing}]
It suffices to show that \eqref{Eq: Goal.J1Jm.Expsums} holds. We will proceed by an induction on $m$. Note that when $m=0$, then $J_1(J_0(M_m(C,X,L)))=J_1(M_m(C,X,L))$ implies that \eqref{Eq: Goal.J1Jm.Expsums} is the same as \eqref{Eq: Goal.ExpSums.Can} with $m=1$. The assumptions of Theorem~\ref{Thm: Main.CanSing} are certainly satisfied under the hypotheses of Theorem~\ref{Thm: Main.TermSing}, and so we may assume $m\geq 1$ from now on. 

We have 
\[
\sum_{\alpha,\beta \in P_{de,C, m}^\vee}S(\alpha,\beta)= \sum_{\deg W,\deg Z\leq e-2g+1}\sum_{\alpha\sim_{\min} Z}\sum_{\beta\sim_{\min} W}S(\alpha,\beta)+\sum_{\substack{\alpha,\beta \in P_{de,C,m}^\vee \\ \max(\deg \alpha,\deg\beta)> e-2g+1}}S(\alpha,\beta)
\]
By Proposition~\ref{Prop: MinorArcGoal.J1} the contribution from the minor $(\alpha,\beta)$ is $o(q^{2(m+1)(n+1)(e+1-g)})$ as $q\to\infty$, which is sufficient. In addition, Proposition~\ref{Prop: GoalMajorArc.J1(Jm)} implies 
\begin{align*}
    \sum_{\substack{\deg Z \leq e-2g+1 \\ \deg W\leq e-2g+1}}\sum_{\substack{\alpha \sim_{\min}Z \\ \beta\sim_{\min}W}}S(\alpha,\beta) & = q^{2(n+1)(e+1-g)}\sum_{\alpha,\beta \in P_{de,C, m-1}^\vee}S(\alpha,\beta)\\
    &= q^{2(m+1)(n+1)(e+1-g)}(1+o(1))
\end{align*}
as $q\to\infty$, where the last identity follows from the induction hypothesis.  
\end{proof}
\section{Major arcs}

\subsection{Major arcs for $J_m(\Mor(C,X,L))$}\label{Sec: Major.M(m)}

Let us first deal with the proof of Proposition~\ref{Prop: GoalMajorArc.J1(Jm)}.
Suppose $\alpha\sim_{\min}Z$ for $\deg Z\le e-2g+1$ and write $\alpha=\alpha_{0}+t\alpha_{1}$ with $\alpha_0\in P_{de,C}^\vee$ and $\alpha_1\in P_{de,C,m-1}^\vee$. For $\vec{x}\in P_{e,C,m}^{n+1}$, let us write $\vec{x}=\vec{y} + t^m \vec{z}$ with $\vec{y}\in P_{e,C,m-1}^{n+1}$ and $\vec{z}\in P_{e,C}^{n+1}$.

Then, using Taylor expansion we obtain
\begin{align*}
S(\alpha) & =\sum_{\substack{\vec{x}\in P_{e,C,m}^{n+1} \\ \vec{x}\text{ glob. gen.}}}\psi_{m}\left(\alpha\left(F(\vec{x})\right)\right)\\
 &= \sum_{\substack{\vec{y}\in P_{e,C,m-1}^{n+1} \\ \vec{y}\text{ glob. gen.}}} \sum_{\vec{z}\in P_{e,C}^{n+1}}\psi_{m}\left(\alpha\left(F\left(\vec{y}\right)+t^{m}\vec{z}\cdot\nabla F\left(\vec{y}\right)\right)\right)\\
 & =\sum_{\substack{\vec{y}\in P_{e,C,m-1}^{n+1} \\ \vec{y}\text{ glob. gen.}}}\psi_{m-1}\left(\alpha_{1}\left(F\left(\vec{y}\right)\right)\right)\psi_{m}\left(\alpha_{0}\left(F\left(\vec{y}\right)\right)\right)\sum_{\vec{z}\in P_{e,C}^{n+1}}\psi_{m}\left(\alpha_{0}\left(t^{m}\vec{z}\cdot\nabla F\left(\vec{y}\right)\right)\right).
\end{align*}
Let 
\[T\left(\alpha_{0}\right)
=\sum_{\vec{z}\in P_{e,C}^{n+1}}\psi_{m}\left(\alpha_{0}\left(t^{m}\vec{z}\cdot\nabla F\left(\vec{y}\right)\right)\right)\\
=\sum_{\vec{z}\in P_{e,C}^{n+1}}\psi\left(\alpha_{0}\left(\vec{z}\cdot\nabla F\left(\vec{y}\bmod t\right)\right)\right).\]
\begin{lem}\label{Lem: ExpSumVanish.Major.Jm}
    Let $\alpha \in P_{de,C,m}^\vee$ and suppose that $\alpha\sim_{\min}Z$ with $\deg Z\leq e-2g+1$. Then, with the notation above, we have 
    \[
T\left(\alpha_{0}\right)=\begin{cases}
q^{\left(n+1\right)\left(e-0-g+1\right)}q^{\left(n+1\right)0}=q^{\left(n+1\right)\left(e-g+1\right)} & \text{if }Z=0,\\
0 & \text{else}.
\end{cases}
\]
\end{lem}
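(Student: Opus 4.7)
My plan is to view $T(\alpha_{0})$ as a character sum on the finite abelian group $G = P_{e,C}^{n+1}$, with character $\chi(\vec{z}) = \psi\!\left(\alpha_{0}\!\left(\vec{z}\cdot\nabla F(\vec{y}\bmod t)\right)\right)$. By orthogonality, $T(\alpha_{0})$ equals $|G| = q^{(n+1)(e-g+1)}$ (via Riemann--Roch, using $e \ge 2g-1$) when $\chi$ is trivial and vanishes otherwise, so the task reduces to checking that $\chi$ is trivial precisely when $Z = 0$. The case $Z = 0$ is immediate: factoring through the empty divisor forces $\alpha_{0} = 0$, hence $\chi \equiv 1$. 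For $Z \ne 0$, the aim will be to produce some $\vec{z} \in P_{e,C}^{n+1}$ with $\alpha_{0}(\vec{z}\cdot\nabla F(\vec{y}\bmod t)) \ne 0$.

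The key geometric input will be that spreading out guarantees $\mathrm{char}(\F_q) > d$. Combined with smoothness of $X$ and Euler's identity $dF = \sum_{i} x_{i}\partial_{i} F$, this forces the Jacobian locus $V(\partial_{0}F,\dots,\partial_{n}F) \subset \P^{n}$ to be empty, since any common zero of the partials would also be a zero of $F$ and hence a singular point of $X$. Consequently, for any globally generating $\vec{y}_{0} = \vec{y}\bmod t$, the sections $\partial_{i} F(\vec{y}_{0})$ have no common zero on $C$, so multiplication by $\nabla F(\vec{y}_0)$ yields a surjection of vector bundles $\mathcal{O}_{C}^{n+1} \otimes L \twoheadrightarrow L^{\otimes d}$ on $C$. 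Restricting to the zero-dimensional scheme $Z$ preserves this surjection, and because $H^{1}(Z,-) = 0$ for any coherent sheaf on $Z$, it upgrades to a surjection on global sections: the multiplication map $P_{e,Z}^{n+1} \twoheadrightarrow P_{de,Z}$ is onto.

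To finish, since $\alpha_{0} \sim_{\min} Z$ the exact sequence $0 \to P_{de-Z,C} \to P_{de,C} \to P_{de,Z} \to 0$ (exact because $\deg Z \le e-2g+1$ forces $H^1(C, L^{\otimes d}(-Z)) = 0$) identifies $\alpha_0$ with a nonzero functional $\tilde\alpha_{0} \in P_{de,Z}^{\vee}$. By the surjectivity above, there will exist $\vec{z}' \in P_{e,Z}^{n+1}$ with $\tilde\alpha_{0}(\vec{z}' \cdot \nabla F(\vec{y}_{0})|_{Z}) \ne 0$. The same hypothesis $\deg Z \le e-2g+1$ makes $P_{e,C} \twoheadrightarrow P_{e,Z}$ surjective (via $H^{1}(C,L(-Z)) = 0$), so I can lift $\vec{z}'$ to some $\vec{z} \in P_{e,C}^{n+1}$ satisfying $\alpha_{0}(\vec{z} \cdot \nabla F(\vec{y}_{0})) \ne 0$, certifying that $\chi$ is nontrivial. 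The only genuinely delicate step is recognising that $V(J) = \emptyset$; after that, the argument is essentially bookkeeping with duality, restriction maps, and the degree bound $\deg Z \le e-2g+1$.
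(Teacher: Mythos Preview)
Your proof is correct. Both your argument and the paper's reduce via orthogonality to showing that the linear functional $\vec{z}\mapsto\alpha_0(\vec{z}\cdot\nabla F(\vec{y}_0))$ on $P_{e,C}^{n+1}$ is nontrivial exactly when $Z\ne 0$, and both rest on the same geometric fact: smoothness of $X$ together with $\mathrm{char}\,\F_q>d$ forces $V(\partial_0 F,\dots,\partial_n F)=\emptyset$ in $\P^n$, so the sections $\partial_i F(\vec{y}_0)$ have no common zero on $C$ once $\vec{y}_0$ globally generates. The difference is in how this input is leveraged. You package it as surjectivity of the bundle map $L^{\oplus(n+1)}\to L^{\otimes d}$, restrict to $Z$, and use only that $\alpha_0\ne 0$ to produce a witness $\vec{z}$; this sidesteps any local analysis. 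The paper instead assumes the functional vanishes identically, uses the full strength of minimality (locally: that $\alpha_0$ does not annihilate the socle at any point of $Z$) to deduce $\partial_i F(\vec{y}_0)|_Z=0$ for every $i$, and then invokes the Nullstellensatz to force $y_i^M|_Z=0$, contradicting global generation unless $Z=0$. Your route is arguably more transparent and uses minimality only in the weak form ``$Z\ne 0\Rightarrow\alpha_0\ne 0$''; the paper's route makes more visible \emph{why} minimality (rather than mere nonvanishing) is the natural hypothesis to carry.
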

\begin{proof}
Since $\alpha_{0}\sim Z$ and $\deg Z\leq e-2g+1$, using the exact sequence $0\to H^{0}\left(C,L\left(-Z\right)\right)\to H^{0}\left(C,L\right)\to H^{0}\left(Z,L|_{Z}\right)\to0$, and applying Riemann-Roch, we may write 
\[
T\left(\alpha_{0}\right)=q^{\left(n+1\right)\left(e-\deg Z-g+1\right)}\sum_{\vec{z}\in P_{e,Z}^{n+1}}\psi\left(\alpha_{0}\left(\vec{z}\cdot\nabla F\left(\vec{y}\bmod t|_{Z}\right)\right)\right),
\] viewing $\alpha_{0}$ as an element of $P_{de,Z}^{\vee}$. By orthogonality of characters we have 
\[
T(\alpha_0)=\begin{cases}
    q^{(n+1)(e-g+1)} &\text{if }\alpha_0(\vec{z}\cdot \nabla F(\vec{y} \bmod t_{|Z}))=0\text{ for all }\vec{z}\in P_{e,Z}^{n+1},\\
    0 &\text{else.}
\end{cases}
\]
Moreover, since $\alpha_{0}\sim_{\min}Z$, it follows that $\alpha_{0}\left(\vec{z}\cdot\nabla F\left(\vec{y}\bmod t|_{Z}\right)\right)=0$
identically in $\vec{z}$ if and only if $\alpha_{0}\left(z_{i}\frac{\partial F}{\partial y_{i}}\left(\vec{y}\bmod t|_{Z}\right)\right)=0$
for $0\leq i \leq n$, which holds if and only if $z_{i}\frac{\partial F}{\partial y_{i}}\left(\vec{y}\bmod t|_{Z}\right)=0$
for $0\leq i \leq n$. If this holds for every $\vec{z},$ then $\frac{\partial F}{\partial y_{i}}\left(\vec{y}\bmod t|_{Z}\right)=0$
for $0\leq i \leq n$. 

Since $F$ is smooth by assumption, the radical of the ideal $\wangle{\frac{\partial F}{\partial y_{0}}\left(\vec{y}\bmod t\right),\ldots,\frac{\partial F}{\partial y_{n}}\left(\vec{y}\bmod t\right)}$ is given by the ideal $\wangle{y_{0}\bmod t,\ldots,y_{n}\bmod t}$,
from which it follows that $y_{0}^{M}\bmod t,\ldots,y_{n}^{M}\bmod t$
are all in $\wangle{\frac{\partial F}{\partial y_{0}}\left(\vec{y}\bmod t\right),\ldots,\frac{\partial F}{\partial y_{n}}\left(\vec{y}\bmod t\right)}$
for sufficiently large $M$. Since $\frac{\partial F}{\partial y_{i}}\left(\vec{y}\bmod t|_{Z}\right)=0$
for $0\leq i\leq n$, it follows that $y_{0}^{M}\bmod t|_{Z},\ldots,y_{n}^{M}\bmod t|_{Z}=0$.
This means $MV\left(y_{0}\bmod t,\ldots,y_{n}\bmod t\right)-Z$ is
effective. However, since $y_{0} ,\ldots,y_{n}$ globally
generate modulo $t$, we must have $V\left(y_{0} \bmod t,\ldots,y_{n} \bmod t\right)=0$. Hence, $\alpha_{0}\left(\vec{z}\cdot\nabla F\left(\vec{y}\bmod t|_{Z}\right)\right)=0$ identically
if and only if $Z=0$. 
    
\end{proof}
Having Lemma~\ref{Lem: ExpSumVanish.Major.Jm} at our disposal, it is now straightforward to prove Proposition~\ref{Prop: MajorArcGoal.J1}.
\begin{proof}[Proof of Proposition~\ref{Prop: MajorArcGoal.J1}]
We have  
\begin{align*}
 & \sum_{\deg Z\le e-2g+1}\sum_{\alpha\sim_{\min}Z}S\left(\alpha\right)\\
 & =\sum_{\alpha_{1}\in P_{de,C,m-1}^{\vee}}\sum_{\substack{\vec{y}\in P_{e,C,m-1}^{n+1} \\ \vec{y} \text{ glob. gen.}}}\psi_{m-1}\left(\alpha_{1}\left(F\left(\vec{y}\right)\right)\right)\sum_{\deg Z\le e-2g+1}\sum_{\alpha_{0}\sim_{\min}Z}\psi_{m}\left(\alpha_{0}\left(F\left(\vec{y}\right)\right)\right)T\left(\alpha_{0}\right)\\
 & =\sum_{\alpha_{1}\in P_{de,C,m-1}^{\vee}}\sum_{\substack{\vec{y}\in P_{e,C,m-1}^{n+1} \\ \vec{y} \text{ glob. gen.}}}\psi_{m-1}\left(\alpha_{1}\left(F\left(\vec{y}\right)\right)\right)q^{(n+1)(e-g+1)}\\
 & =\sum_{\alpha_{1}\in P_{de,C,m-1}^{\vee}}S\left(\alpha_{1}\right)q^{(n+1)(e-g+1)},
\end{align*}
where the penultimate identity follows from Lemma~\ref{Lem: ExpSumVanish.Major.Jm} and the last from orthogonality of characters.
\end{proof}
\begin{rem}
    Lemma~\ref{Lem: ExpSumVanish.Major.Jm} is the reason why we keep working with globally generating tuples of sections in the definition of $S(\alpha,\beta)$ as opposed to other applications of the circle method. Indeed, if we drop this condition, then $T(\alpha_0)$ can be non-zero for non-trivial effective divisors $Z$. These would then give additional contributions in the proof of Proposition~\ref{Prop: MajorArcGoal.J1}, that seem to be hard to deal with. The same argument applies to the major arcs of $J_1(J_m(\Mor(C,X,L)))$.
\end{rem}
%This implies 
%\[\frac{\sum\limits_{\deg Z\le e-2g+1}\sum\limits_{\alpha\sim_{\min}Z}S\left(\alpha\right)}{q^{(m+1)(n+1)(e-g+1)}} = \frac{\sum\limits_{\alpha_{1}\in P_{de,C,m-1}^{\vee}}S\left(\alpha_{1}\right)}{q^{m(n+1)(e-g+1)}}.\] 
%By the induction hypothesis, the right-hand side approaches 1 as $q\to\infty$, which completes the verification of the major arc contribution \eqref{Eq: MajorArcGoal.J1}. 

\subsection{Major arcs for $J_1(J_m(\Mor(C,X,L)))$}\label{Sec: Major.J1(Jm)}
We will now carry out the proof of Proposition~\ref{Prop: GoalMajorArc.J1(Jm)} in close parallel to Section~\ref{Sec: Major.M(m)}, with the key ingredient being again the vanishing result Lemma~\ref{Lem: ExpSumVanish.Major.Jm}.
\begin{proof}[Proof of Proposition~\ref{Prop: GoalMajorArc.J1(Jm)}]
    
Suppose that $\alpha\sim_{\min}Z$ and $\beta\sim_{\min}W$ for $\deg Z,\deg W\le e-2g+1$
and write $\alpha=\alpha_{0}+t\alpha_{1},\beta=\beta_{0}+t\beta_{1}$.
Also, write $\vec{x}_0=\vec{y}_0+t^{m}\vec{z}_0$
and $\vec{x}_1=\vec{y}_1+t^{m}\vec{z}_1$. 
Then, we obtain
\begin{align*}
 S\left(\alpha,\beta\right)
 &=\sum_{\substack{\vec{x}_0,\vec{x}_1\in P_{e,C,m}^{n+1} \\ \vec{x}_0\text{ glob. gen.}}}\psi_{m}\left(\alpha\left(F\left(\vec{x}_0\right)+\beta\vec{x}_{1}\cdot\nabla F\left(\vec{x}_{0}\right)\right)\right)\\
 & =\sum_{\substack{\vec{y}_{0}\in P_{e,C,m-1}^{n+1} \\ \vec{y}_{0}\text{ glob. gen.}}}\sum_{\vec{y}_{1}\in P_{e,C,m-1}^{n+1}}(\psi_{m-1}\left(\alpha_{1}F\left(\vec{y}_{0}\right)+\beta_{1}\vec{y}_{1}\cdot\nabla F\left(\vec{y}_{0}\right)\right)\psi_{m-1}\left(\alpha_{0}F\left(\vec{y}_{0}\right)\right)\\
 & \qquad\cdot\sum_{\vec{z}_{0}\in P_{e,C}^{n+1}}\psi\left(\alpha_{0}\vec{z}_{0}\cdot\nabla F\left(\vec{y}_{0}\right)\right)\psi_{m}\left(\beta_{0}\vec{y}_{1}\cdot\nabla F\left(\vec{y}_{0}+t^{m}\vec{z}_{0}\right)\right)\\
 & \qquad\cdot\sum_{\vec{z}_{1}\in P_{e,C}^{n+1}}\psi\left(\beta_{0}\vec{z}_{1}\cdot\nabla F\left(\vec{y}_{0}\right)\right)).
\end{align*}

By Lemma~\ref{Lem: ExpSumVanish.Major.Jm} we have
\[
\sum_{\vec{z}_{1}\in P_{e,C}^{n+1}}\psi\left(\beta_{0}\vec{z}_{1}\cdot\nabla F\left(\vec{y}_{0}\right)\right)=\begin{cases}
q^{\left(n+1\right)\left(e-0-g+1\right)}q^{\left(n+1\right)0}=q^{\left(n+1\right)\left(e-g+1\right)} & \text{if }W=0,\\
0 & \text{else},
\end{cases}
\]
and
\[
\sum_{\vec{z}_{0}\in P_{e,C}^{n+1}}\psi\left(\alpha_{0}\vec{z}_{0}\cdot\nabla F\left(\vec{y}_{0}\right)\right)=\begin{cases}
q^{\left(n+1\right)\left(e-0-g+1\right)}q^{\left(n+1\right)0}=q^{\left(n+1\right)\left(e-g+1\right)} & \text{if }Z=0,\\
0 & \text{else}.
\end{cases}
\]
It follows that the contribution of $W\ne0$ in \[\sum_{\deg Z\le e-2g+1}\sum_{\deg W\le e-2g+1}\sum_{\alpha\sim_{\min}Z}\sum_{\beta\sim_{\min}W}S\left(\alpha,\beta\right)\]
is 0, and the contribution of $Z\ne0$ in \[\sum_{\deg Z\le e-2g+1}\sum_{\alpha\sim_{\min}Z}\sum_{\beta\sim_{\min}0}S\left(\alpha,\beta\right)\]
is similarly 0. 

Then, we have 
\begin{align*}
 & \sum_{\deg Z\le e-2g+1}\sum_{\deg W\le e-2g+1}\sum_{\alpha\sim_{\min}Z}\sum_{\beta\sim_{\min}W}S\left(\alpha,\beta\right)\\
 & =\sum_{\alpha\sim_{\min}0}\sum_{\beta\sim_{\min}0}S\left(\alpha,\beta\right)\\
 & =\sum_{\alpha_{1},\beta_{1}\in P_{de,C,m-1}^{\vee}}\sum_{\substack{\vec{y}_{0}\in P_{e,C,m-1}^{n+1} \\ \vec{y}_{0} \text{ glob. gen.}}}\sum_{\vec{y}_{1}\in P_{e,C,m-1}^{n+1}}\psi_{m-1}\left(\alpha_{1}F\left(\vec{y}_{0}\right)+\beta_{1}\vec{y}_{1}\cdot\nabla F\left(\vec{y}_{0}\right)\right)\\
 & =\sum_{\alpha_{1},\beta_{1}\in P_{de,C,m-1}^{\vee}}S\left(\alpha_{1},\beta_{1}\right)q^{2(n+1)(e-g+1)},
\end{align*}
which is satisfactory.
\end{proof}
%This implies 
%\[\frac{\sum\limits_{\deg Z\le e-2g+1}\sum\limits_{\deg W\le e-2g+1}\sum\limits_{\alpha\sim_{\min}Z}\sum\limits_{\beta\sim_{\min}W}S\left(\alpha,\beta\right)}{q^{2(m+1)(n+1)(e-g+1)}} = \frac{\sum\limits_{\alpha_{1},\beta_{1}\in P_{de,C,m-1}^{\vee}}S\left(\alpha_{1},\beta_{1}\right)}{q^{2m(n+1)(e-g+1)}}.\] 
%By the induction hypothesis, the right-hand side approaches 1 as $q\to\infty$, which completes the verification of the major arc contribution \eqref{Sec: Major.J1(Jm)}. 

\section{Minor arcs}
%\todo{Goal is to establish bounds for S(alpha) here, state at the beginning}
In this section we establish upper bounds for the exponential sums $S(\alpha)$ and $S(\alpha,\beta)$  defined in \eqref{Eq: Def.S(alpha)} and \eqref{Eq: Def.S(alpha,beta)} via Weyl differencing, which will then be used to estimate the contribution from the minor arcs. 

As before, let $F\in \F_q[x_0,\dots, x_n]$ be a homogeneous degree $d$ form defining the smooth hypersurface $X\subset \P^n$, given by
\[
F(\vec{x})=\sum_{j_1,\dots, j_d=0}^n a_{j_1,\dots, j_d}x_{j_1}\cdots x_{j_d}
\]
where the $a_{j_1,\dots, j_d}\in\F_q$ are symmetric in the indices. A key role in our estimates is played by the multilinear forms 
\begin{equation}\label{Eq: Defi.MultilinForms}
\Psi_{j}\left(\vec{x}^{(1)},\ldots,\vec{x}^{(d-1)}\right)=d!\sum_{j_{1},\ldots,j_{d-1}=0}^{n}a_{j_{1},\ldots,j_{d-1},j}x_{j_{1}}^{(1)}\cdots x_{j_{d-1}}^{(d-1)},
\end{equation}
for $j=0,\dots, n$, where multiplication should be interpreted via the cup product. Given $\vec{y}\in P_{e,C,\ell}^{n+1}$ and a function $G\colon P_{e, C, \ell}^{n+1}\to P_{de,C, m}$ for some $\ell$, we define the differencing operator $D_{\vec{y}}$ via its action on $G$ by 
\[
D_{\vec{y}}(G)(\vec{x})=G(\vec{x}+\vec{y})-G(\vec{x}).
\]
For $k\geq 2$, we define recursively $D_{\vec{y}_1,\dots, \vec{y}_k}(G)=D_{\vec{y}_k}(D_{\vec{y}_1,\dots, \vec{y}_{k-1}}(G))$. The key property of the differencing operator is that if $G$ is a polynomial, then it lowers the degree of $G$. In the special case of a homogeneous form $F$ as above, we have for $G(\vec{x})=\vec{z}\cdot \nabla F(\vec{x})$ the identity
\begin{equation}\label{Eq: MultiLin+DiffOperator}
D_{\vec{y}^{(1)},\dots, \vec{y}^{(d-2)}}(G)(\vec{y}^{(d-1)})=\sum_{i=0}^{n} z_i \Psi_i(\vec{y}^{(1)},\dots, \vec{y}^{(d-1)}) 
\end{equation}
where $\Psi_0,\dots, \Psi_n$ are the multilinear forms associated to $F$ defined in \eqref{Eq: Defi.MultilinForms}. 
\subsection{Weyl differencing for $J_m(\Mor(C,X,L))$}\label{Se: WeylDiff.S(alpha)}
We begin with the treatment of $S(\alpha)$. If we write $m'=\lceil (m+1)/2\rceil$, then assuming $m\geq 1$ any $\vec{x}\in P_{e,C,m}^{n+1}$ can be written uniquely as $\vec{x}=\vec{y}+t^{m'}\vec{z}$ for some $\vec{y}\in P_{e,C, m'-1}^{n+1}$ and $\vec{z}\in P_{e,C, m-m'}^{n+1}$, so that we have $\psi_m(\alpha F(\vec{x}))=\psi(\alpha (F(\vec{y})+t^{m'} \vec{z}\cdot \nabla F(\vec{y})))$. Therefore, we have 
\[
S(\alpha)=\sum_{\vec{z}\in P^{n+1}_{e,C, m-m'}}T(\alpha),
\]
where 
\[
T(\alpha)=\sum_{\substack{\vec{y}\in P^{n+1}_{e,C, m'-1} \\ \vec{y}\text{ glob. gen.} }}\psi(\alpha((F(\vec{y})+t^{m'} \vec{z}\cdot \nabla F(\vec{y}))),
\]
where global generation of $\vec{y}$ means global generation after reducing modulo $t$. We now continue with the usual process of Weyl differencing. For $\vec{z}\in P_{e,C, m-m'}^\vee$, let $G'(\vec{y})=F(\vec{y})+t^{m'}\vec{z}\cdot \nabla F(\vec{y})$ and $G(\vec{y})=\vec{z}\cdot \nabla F(\vec{y})$. Squaring gives 
\[
|T(\alpha)|^2 = \sum_{\substack{\vec{y}_1\in P_{e,C, m'-1}^{n+1} \\ \vec{y}_1\text{ glob. gen.}}}\sum_{\substack{\vec{y}_2\in P_{e,C,m'-1}^{n+1} \\ \vec{y}_2\text{ glob. gen.}}}\psi(\alpha(G'(\vec{y}_1)-G'(\vec{y}_2))).
\]
After making the change of variables $\vec{y}^{(1)}=\vec{y}_2$ and $\vec{x}=\vec{y}_1-\vec{y}_2$, then the condition that $\vec{y}_1$ and $\vec{y}_2$ globally generate transforms into $\vec{y}^{(1)}$ and $\vec{y}^{(1)}+\vec{x}$ globally generate. We obtain 
\begin{align}
\begin{split}\label{Eq: IndStartWeylDiff}
|T(\alpha)|^2 &= \sum_{\substack{\vec{y}^{(1)}\in P_{e,C, m'-1}^{n+1}\\ \vec{y}^{(1)}\text{ glob. gen.}}}\sum_{\substack{\vec{x}\in P_{e,C, m'-1}^{n+1}\\ \vec{y}^{(1)}+\vec{x}\text{ glob. gen.}}}\psi(\alpha(D_{\vec{y}^{(1)}}(G')(\vec{x})))\\
&\leq \sum_{\substack{\vec{y}^{(1)}\in P_{e,C, m'-1}^{n+1}\\ \vec{y}^{(1)}\text{ glob. gen.}}}\left|\sum_{\substack{\vec{x}\in P_{e,C, m'-1}^{n+1}\\ \vec{y}^{(1)}+\vec{x}\text{ glob. gen.}}}\psi(\alpha(D_{\vec{y}^{(1)}}(G')(\vec{x})))\right| 
\end{split}
\end{align}

Inductively, an application of Cauchy--Schwarz yields
\begin{align*}
    |T(\alpha)|^{2^{d-2}}&\leq \left((\#P^{n+1}_{e,C, m'-1})^{2^{d-3}-(d-2)}\sum_{\vec{y}^{(1)}}\cdots \sum_{\vec{y}^{(d-3)}}\left|\sum_{\substack{\vec{x}\in P_{e,C,m'-1}^{n+1}\\ \vec{y}^{(1)}+\cdots + \vec{y}^{(d-3)}+\vec{x}\text{ glob. gen.}}}\psi(\alpha(D_{\vec{y}^{(1)},\dots, \vec{y}^{(d-3)}}(G')(\vec{x})))\right|\right)^2\\
    &\leq (\#P_{e,C, m'-1}^{n+1})^{2^{d-2}-(d-1)}\sum_{\vec{y}^{(1)}}\cdots \sum_{\vec{y}^{(d-3)}}\left|\sum_{\substack{\vec{x}\in P_{e,C,m'-1}^{n+1}\\ \vec{y}^{(1)}+\cdots + \vec{y}^{(d-3)}+\vec{x}\text{ glob. gen.}}}\psi(\alpha(D_{\vec{y}^{(1)},\dots, \vec{y}^{(d-3)}}(G')(\vec{x})))\right|^2\\
    &\leq (\#P_{e,C,m'-1}^{n+1})^{2^{d-2}-(d-1)}\sum_{\vec{y}^{(1)}}\cdots \sum_{\vec{y}^{(d-2)}}\sum_{\vec{y}^{(d-1)}}\psi(\alpha(D_{\vec{y}^{(1)},\dots, \vec{y}^{(d-2)}}(G')(\vec{y}^{(d-1)}))),
\end{align*}
where for each $1\leq i \leq d-1$, the sum over $\vec{y}^{(i)}\in P_{e,C, m'-1}^{n+1}$ is restricted to those such that $y^{(1)}+\cdots +\vec{y}^{(i)}$ globally generates and the last inequality follows from a change of variables as in \eqref{Eq: IndStartWeylDiff}.   
Note that $D_{\vec{y}^{(1)},\dots, \vec{y}^{(d-2)}}(G')(\vec{y}^{(d-1)})=D_{\vec{y}^{(1)},\dots, \vec{y}^{(d-2)}}(F)(\vec{y}^{(d-1)})+t^{m'}D_{\vec{y}^{(1)},\dots, \vec{y}^{(d-2)}}(G)(\vec{y}^{(d-1)})$. Applying Cauchy--Schwarz $d-2$ times gives 
\begin{equation}\label{Eq: CSMinorJm}
\left|S(\alpha)\right|^{2^{d-2}}\leq (\#P^{n+1}_{e,C,m-m'})^{2^{d-2}-1}\sum_{\vec{z}\in P^{n+1}_{e,C, m-m'}}\left|T(\alpha)\right |^{2^{d-2}}.    
\end{equation}
As $D_{\vec{y}^{(1)},\dots, \vec{y}^{(d-2)}}(F)(\vec{y}^{(d-1)})$ is clearly independent of $\vec{z}$ and recalling \eqref{Eq: MultiLin+DiffOperator}, interchanging the order of summation and taking absolute values yields
%An application of Weyl differencing $d-2$ times furnishes the upper bound 
%\[
%\left|T(\alpha)\right|^{2^{d-2}}\leq (\#P^{n+1}_{e,C, m'-1})^{2^{d-2}-(d-1)}\sum_{\vec{y}^{(1)}}\cdots \sum_{\vec{y}^{(d-1)}}\psi_m(\alpha(D(F(\vec{y}^{(d-1)})+t^{m'}\vec{z}\cdot \nabla F(\vec{y}^{(d-1)}))),
%\]
%where $D=D_{\vec{y}^{(1)},\dots, \vec{y}^{(d-2)}}$ is the usual differencing operator and the $\vec{y}^{(i)}\in P^{n+1}_{e,C,m'-1}$ are such that $\vec{y}^{(1)}+\cdots +\vec{y}^{(i)}$ globally generate modulo $t$.  In addition, the term $D(F(\vec{y}^{(d-1)}))$ is clearly independent of $\vec{z}$, so that after interchanging the order of summation, taking absolute values and executing the sum over $\vec{z}$ in \eqref{Eq: CSMinorJm} we obtain
\begin{align*}
\left|S(\alpha)\right|^{2^{d-2}}&\leq (\#P_{e,C,m-m'}^{n+1})^{2^{d-2}-1}(\#P_{e,C,m'-1}^{n+1})^{2^{d-2}-(d-1)}\\
& \phantom{aa} \times \sum_{\vec{y}^{(1)}}\cdots \sum_{\vec{y}^{(d-1)}}\left|\sum_{\vec{z}\in P_{e,C,m-m'}^{n+1}}\psi(\alpha(t^{m'}\sum_{i=0}^n z_i\Psi_i(\vec{y}^{(1)},\dots, \vec{y}^{(d-1)})))\right|\\
&\leq (\#P^{n+1}_{e,C, m-m'})^{2^{d-2}}(\#P^{n+1}_{e,C, m'-1})^{2^{d-2}-(d-1)}N^{(m'-1,m+1-m')}(\alpha),    
\end{align*}
where for $k_1\geq  k_2-1$ we have defined
\[
N^{(k_1,k_2)}(\alpha)=\#\left\{(\vec{y}^{(1)},\dots, \vec{y}^{(d-1)})\in P_{e,C, k_1}^{(n+1)(d-1)}\colon \begin{array}{l} \alpha(\Psi_i(\vec{y}^{(1)},\dots, \vec{y}^{(d-1)})y)\equiv 0 \bmod s^{k_2} \\ \text{for all }0\leq i \leq n\text{ and }y\in P_{e,C, k_2-1}\end{array}\right\}.
\]
Note that here we dropped the condition about globally generating, which makes the counting function only larger. For any $k_1\geq k_2-1$, we have $N^{(k_1,k_2)}(\alpha)=\#P^{(n+1)(d-1)}_{e,C, k_1-k_2}N^{(k_2-1,k_2)}(\alpha)$ with the convention that $P_{e,C, -1}=\{0\}$, so that for $k\in \Z_{\geq -1}$ we shall write $N^{(k)}(\alpha)=N^{(k,k+1)}(\alpha)$ from now on. As $\#P_{e,C,m-m'}\#P_{e,C,m'-1}=\#P_{e,C,m}$ and $\#P_{e,C, m'-1}^{-1}\#P_{e,C,2m'-m-2}=\#P_{e,C,m-m'}^{-1}$, we have the proved the following result.
\begin{lem}\label{Le: WeylDiff.S(alpha)}
Let $\alpha \in P^\vee_{de,C,m}$ and suppose that $m\geq 1$. Then, 
\[
|S(\alpha)|^{2^{d-2}}\leq (\#P^{n+1}_{e,C, m})^{2^{d-2}} (\#P_{e,C, m-m'}^{n+1})^{-(d-1)}N^{(m-m')}(\alpha). 
\]
\end{lem}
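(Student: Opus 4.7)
The plan is to finalize the Weyl differencing bound by combining the estimates built up in the preceding paragraphs. Three packaging steps remain.

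First, from \eqref{Eq: CSMinorJm} together with the iterated Cauchy--Schwarz bound on $|T(\alpha)|^{2^{d-2}}$, I would record
\[
|S(\alpha)|^{2^{d-2}}\leq (\#P^{n+1}_{e,C, m-m'})^{2^{d-2}}(\#P^{n+1}_{e,C, m'-1})^{2^{d-2}-(d-1)}\,N^{(m'-1,\,m+1-m')}(\alpha),
\]
after swapping the $\vec{z}$-sum to the inside, invoking \eqref{Eq: MultiLin+DiffOperator} so the innermost sum is a character sum in $\vec{z}$ of a linear form in the multilinear $\Psi_i$, applying orthogonality (which yields $\#P^{n+1}_{e,C, m-m'}$ when the condition $\alpha(\Psi_i(\vec{y}^{(1)},\dots,\vec{y}^{(d-1)})\,y)\equiv 0 \bmod t^{m+1-m'}$ for all $i$ and $y \in P_{e,C, m-m'}$ holds, and $0$ else), and finally dropping the globally generating restrictions on the $\vec{y}^{(i)}$, which only enlarges the count and hence preserves the upper bound.

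Second, I would invoke the reduction $N^{(k_1,k_2)}(\alpha) = \#P^{(n+1)(d-1)}_{e,C, k_1-k_2}\,N^{(k_2-1)}(\alpha)$ noted in the text, applied with $(k_1,k_2) = (m'-1,\,m+1-m')$, to convert $N^{(m'-1,\,m+1-m')}(\alpha)$ into $\#P^{(n+1)(d-1)}_{e,C, 2m'-m-2}\,N^{(m-m')}(\alpha)$.

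Third, I would collapse the prefactor using the two identities $\#P_{e,C, m-m'}\,\#P_{e,C, m'-1} = \#P_{e,C, m}$ and $\#P_{e,C, 2m'-m-2}\,\#P_{e,C, m-m'} = \#P_{e,C, m'-1}$, both of which follow immediately from $\#P_{e,C,k} = q^{(k+1)(e-g+1)}$ (valid since $L$ is non-special by $e\geq 2g-1$). A direct comparison of exponents of $q$ on both sides shows that these identities absorb all prefactors to yield the claimed bound $(\#P^{n+1}_{e,C,m})^{2^{d-2}}(\#P^{n+1}_{e,C, m-m'})^{-(d-1)}\,N^{(m-m')}(\alpha)$.

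The anticipated obstacle is pure bookkeeping: one must track $m' = \lceil(m+1)/2\rceil$ through both parities of $m$, and handle the boundary case $2m'-m-2 = -1$ (which arises precisely when $m$ is odd) via the convention $P_{e,C,-1} = \{0\}$, so that the intermediate cardinalities remain meaningful and the final inequality holds uniformly for $m\geq 1$.
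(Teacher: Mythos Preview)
Your proposal is correct and follows the paper's argument essentially verbatim: the same three packaging steps (orthogonality in $\vec{z}$ after interchange to reach $N^{(m'-1,m+1-m')}(\alpha)$, the reduction $N^{(k_1,k_2)}=\#P_{e,C,k_1-k_2}^{(n+1)(d-1)}N^{(k_2-1)}$, and the cardinality identities $\#P_{e,C,m-m'}\#P_{e,C,m'-1}=\#P_{e,C,m}$ and $\#P_{e,C,2m'-m-2}\#P_{e,C,m-m'}=\#P_{e,C,m'-1}$) appear in the same order in the paper, including the convention $P_{e,C,-1}=\{0\}$ for odd $m$.
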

\begin{rem}
    It might seem more natural to apply Weyl differencing to $S(\alpha)$ directly rather than to the sum $T(\alpha)$. The reason for not doing so is that we are working with globally generating sections in the definition of $S(\alpha)$. This would then cause problems when evaluating the resulting linear exponential sum using orthogonality of characters. 
\end{rem}
\subsection{Weyl differencing for $J_1(J_m(\Mor(C,X,L)))$}\label{Se: WeylDiff.S(alpha,beta)}
Next, we begin with the analysis of $S(\alpha,\beta)$. 
For any $\vec{z}\in P_{e,C, m}^{n+1}$, the combination of Cauchy--Schwarz and Weyl differencing arguments from the previous subsection can also be applied to the sum 
\[
S'(\alpha,\beta)=\sum_{\vec{y}\in P_{e,C, m}^{n+1}}\psi_m(\alpha F(\vec{y})+\beta \vec{z}\cdot \nabla F(\vec{y})),
\]
where the summation is restricted to elements that globally generate $\bmod~t$. Indeed, the final outcome of Weyl differencing only depends on the top degree part of $\alpha F(\vec{y})+\beta \vec{z}\cdot \nabla F(\vec{y})$ as a polynomial in $\vec{y}$, which after summing trivially over $\vec{z}$ thus leads to the estimate 
\begin{equation}\label{Eq: EstimateS(alpha,beta).alpha}
    |S(\alpha,\beta)|^{2^{d-2}}\leq (\#P^{2(n+1)}_{e,C, m})^{2^{d-2}} (\#P_{e,C, m-m'}^{n+1})^{-(d-1)}N^{(m-m')}(\alpha).
\end{equation}
Alternatively, we can first apply Weyl differencing to the sum over $\vec{y}$ only $d-2$ times and then bring the sum over $\vec{z}$ inside. Beginning with $d-2$ applications of Cauchy--Schwarz yields 
\begin{equation}\label{Eq: CSJ1Jm}
|S(\alpha,\beta)|^{2^{d-2}}\leq \#(P^{n+1}_{e,C, m})^{2^{d-2}-1}\sum_{\vec{z}\in P^{n+1}_{e, C, m}}|S'(\alpha,\beta)|^{2^{d-2}}.
\end{equation}
Arguing as in the lines after \eqref{Eq: CSMinorJm}, we obtain 
\[
|S'(\alpha,\beta)|^{2^{d-2}}\leq \#(P^{n+1}_{e,C, m})^{2^{d-2}-(d-1)}\sum_{\vec{y}^{(1)}}\cdots \sum_{\vec{y}^{(d-1)}}\psi_m(D_{\vec{y}^{(1)},\dots, \vec{y}^{(d-2)}}(\alpha F+\beta \vec{z}\cdot \nabla F))(\vec{y}^{(d-1)}).
\]
We can now insert this estimate into \eqref{Eq: CSJ1Jm} and take absolute values to get 
\begin{align}
\begin{split}\label{Eq: WeylDiff.S(alpha,beta).beta}
    |S(\alpha,\beta)|^{2^{d-2}} &\leq \# (P^{n+1}_{e,C,m})^{2^{d-1}-d}\sum_{\vec{y}^{(1)}}\cdots \sum_{\vec{y}^{(d-1)}}\sum_{\vec{z}}\psi_m(\beta \sum_{i=0}^n z_i \Psi_i(\vec{y}^{(1)},\dots, \vec{y}^{(d-1)}))\\
    &= \#(P^{n+1}_{e,C,m})^{2^{d-1}-d+1}N^{(m)}(\beta),
    \end{split}
\end{align}
where we dropped the condition on the $y^{(i)}$ about global generation of a suitable translate. Combining \eqref{Eq: EstimateS(alpha,beta).alpha} with \eqref{Eq: WeylDiff.S(alpha,beta).beta} and observing that $\#P_{e,C,m}=\#P_{e,C,m-m'}\#P_{e,C, m'-1}$, we arrive at the following result.
\begin{lem}\label{Le: WeylDiff.S(alpha,beta)}
    Let $\alpha,\beta \in P_{de,C,m}^\vee$ and suppose that $m\geq 1$. Then, 
    \[
    |S(\alpha,\beta)|^{2^{d-2}}\leq (\#P^{2(n+1)}_{e,C, m})^{2^{d-2}}(\#P^{n+1}_{e,C, m-m'})^{-(d-1)}\min\left\{N^{(m-m')}(\alpha), (\#P^{n+1}_{e,C, m'-1})^{-(d-1)}N^{(m)}(\beta)\right\}.
    \]
\end{lem}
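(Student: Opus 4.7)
The plan is essentially to observe that the two estimates inside the minimum have already been proved in the discussion preceding the lemma: the first branch is \eqref{Eq: EstimateS(alpha,beta).alpha} verbatim, and the second branch is \eqref{Eq: WeylDiff.S(alpha,beta).beta} after a purely cosmetic reshuffling of the prefactor. The proof therefore reduces to reconciling the two shapes and taking the pointwise minimum.

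Estimate \eqref{Eq: EstimateS(alpha,beta).alpha} is already in the advertised form $(\#P^{2(n+1)}_{e,C,m})^{2^{d-2}}(\#P^{n+1}_{e,C,m-m'})^{-(d-1)}N^{(m-m')}(\alpha)$, so it directly yields the $N^{(m-m')}(\alpha)$ branch. For the $\beta$-branch, I start from
\[
|S(\alpha,\beta)|^{2^{d-2}} \leq (\#P^{n+1}_{e,C,m})^{2^{d-1}-d+1}N^{(m)}(\beta)
\]
supplied by \eqref{Eq: WeylDiff.S(alpha,beta).beta} and rewrite the prefactor using two elementary facts. First, Riemann--Roch gives $\#P_{e,C,k} = q^{(k+1)(e-g+1)}$ (valid since $e\ge 2g-1$), which yields the multiplicative splitting $\#P_{e,C,m} = \#P_{e,C,m-m'}\cdot \#P_{e,C,m'-1}$. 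Second, the exponent identity $2^{d-1}-d+1 = 2\cdot 2^{d-2}-(d-1)$ together with $(\#P^{n+1}_{e,C,m})^{2\cdot 2^{d-2}} = (\#P^{2(n+1)}_{e,C,m})^{2^{d-2}}$ lets me split the prefactor as
\[
(\#P^{2(n+1)}_{e,C,m})^{2^{d-2}} (\#P^{n+1}_{e,C,m-m'})^{-(d-1)}(\#P^{n+1}_{e,C,m'-1})^{-(d-1)},
\]
which is exactly the prefactor of the $\beta$-branch inside the minimum. Taking the pointwise minimum of the two bounds then concludes the proof.

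There is no conceptual obstacle here, since the real work --- the Cauchy--Schwarz and Weyl-differencing chains of Sections~\ref{Se: WeylDiff.S(alpha)} and \ref{Se: WeylDiff.S(alpha,beta)} --- has already been carried out; the only thing to watch is arithmetic bookkeeping of the exponents of $\#P^{n+1}_{e,C,m}$, $\#P^{n+1}_{e,C,m-m'}$, and $\#P^{n+1}_{e,C,m'-1}$, which is settled by the two identities above.
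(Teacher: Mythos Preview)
Your proposal is correct and follows the paper's own argument essentially verbatim: the paper likewise just combines \eqref{Eq: EstimateS(alpha,beta).alpha} and \eqref{Eq: WeylDiff.S(alpha,beta).beta} using the splitting $\#P_{e,C,m}=\#P_{e,C,m-m'}\cdot\#P_{e,C,m'-1}$. One cosmetic remark: you invoke Riemann--Roch for that splitting, but it already follows from $P_{e,C,k}=P_{e,C}\otimes\F_q[t]/t^{k+1}$, i.e.\ $\#P_{e,C,k}=(\#P_{e,C})^{k+1}$ and $(m+1)=(m-m'+1)+m'$.
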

\begin{rem}
    An alternative approach to estimate $S(\alpha,\beta)$ is to treat $F(\vec{x})=\vec{y}\cdot \nabla F(\vec{x})=0$ as a complete intersection of two forms and then use the standard approach in the circle method. Ignoring that globally generating sections cause difficulties here, that is to say, via Weyl differencing one can relate $S(\alpha,\beta)$ to certain multilinear forms associated to the pencil generated by $F(\vec{x})$ and $ \vec{y}\cdot \nabla F(\vec{x})$. However, our approach is more efficient and exploits the special shape of the equations with the result that we save one iteration of differencing. In the case $m=0$, this is explained in more detail in the paragraph after (5.3) in \cite{BrowningSawinFree}.
\end{rem}
\subsection{Shrinking lemma}
Our next goal is to establish a suitable form of Davenport's shrinking lemma. This will be achieved by reducing to the case $m=0$. We recall the following (slightly more general) result from Proposition 22 of \cite{haseliu2024higher}:

\begin{lem}\label{Le: GeneralShrinking.m=0}
Let $L_{0},\ldots,L_{M}$ be linear forms in $n+1$ variables $x_{0},\ldots,x_{n}$
whose coefficients belong to $P_{(e-s)(\ell-1)+e(d-\ell-1),C}$. Also, let
$\alpha_{0},\ldots,\alpha_{M'}\in P_{de,C}^{\vee}$ and $I_{0},\ldots,I_{n}$
be subsets of $\{0,\ldots,M\}\times\{0,\ldots,M'\}$. Then, for $\vec{x}\in P_{e,C}^{n+1}$,
we can think of $\sum_{(i,j)\in I_{k}}\alpha_{i}\left(L_{j}\left(\vec{x}\right)(-)\right)$
as a linear functional $P_{e+(\ell-1)s,C}^{\vee}$; for $\vec{x}\in P_{e-s,C}^{n+1}$,
we can think of $\sum_{(i,j)\in I_{k}}\alpha_{i}\left(L_{j}\left(\vec{x}\right)(-)\right)$
as a linear functional $P_{e+\ell s,C}^{\vee}$. 

Write 

\[
K_{\ell-1}\left(\alpha\right)=\#\left\{ \vec{x}\in P_{e,C}^{n+1}:\sum_{(i,j)\in I_{k}}\alpha_{i}\left(L_{j}\left(\vec{x}\right)(-)\right)=0\in P_{e+(\ell-1)s,C}^{\vee}\text{ for all }k\right\} 
\]
and 
\[
K'_{\ell}\left(\alpha\right)=\#\left\{ \vec{x}\in P_{e-s,C}^{n+1}:\sum_{(i,j)\in I_{k}}\alpha_{i}\left(L_{j}\left(\vec{x}\right)(-)\right)=0\in P_{e+\ell s,C}^{\vee}\text{ for all }k\right\} .
\]
Then, 
\[
\frac{K_{\ell-1}(\alpha)}{K'_{\ell}(\alpha)}\le\begin{cases}
q^{\left(n+1\right)s} & g=0\text{ and }s\ge0,\\
q^{\left(n+1\right)s} & g=1\text{ and }s\ge2,\\
q^{\left(n+1\right)s} & g\ge2,s\ge\max\left(2g-1,2\right),\text{ and }\ell\ge2,\\
q^{\left(n+1\right)\left(s+\left(g+1\right)/2\right)} & g\ge2,s\ge\max\left(2g-1,2\right),\text{ and }\ell=1.
\end{cases}
\]
\end{lem}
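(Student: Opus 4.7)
The plan is a function field analogue of Davenport's shrinking lemma, carried out by Fourier dualization on $C$ via orthogonality of characters. First, by orthogonality of characters one rewrites both counts on the ``dual side'':
\[
K_{\ell-1}(\alpha) = \frac{|P_{e,C}|^{n+1}}{|P_{e+(\ell-1)s,C}|^{n+1}}\, N_{\ell-1}(\alpha), \qquad K'_{\ell}(\alpha) = \frac{|P_{e-s,C}|^{n+1}}{|P_{e+\ell s,C}|^{n+1}}\, N'_{\ell}(\alpha),
\]
where $N_{\ell-1}(\alpha)$ counts $(n+1)$-tuples $\vec{y}\in P_{e+(\ell-1)s,C}^{n+1}$ on which certain linear functionals $G_m(\vec{y})\in P_{e,C}^{\vee}$ (built from $\alpha$ and the coefficients of the $L_j$, obtained by extracting the coefficient of $x_m$ in the inner linear form in $\vec{x}$) vanish for all $m = 0, \dots, n$, and $N'_{\ell}(\alpha)$ is the analogous quantity with functionals in $P_{e-s,C}^{\vee}$. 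By Riemann-Roch and the non-speciality hypothesis, the ratio of prefactors is $q^{2(n+1)s}$, so the claim reduces to showing $N'_{\ell}(\alpha)\ge q^{(n+1)s} N_{\ell-1}(\alpha)$ in the generic cases (with an allowable defect of $q^{(n+1)(g+1)/2}$ in the exceptional case $g\ge 2$, $\ell = 1$).

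The second step lifts $N_{\ell-1}$-solutions to $N'_{\ell}$-solutions. The key observation is that for $\sigma\in P_{s,C}$, the product $\sigma\vec{y}\in P_{e+\ell s,C}^{n+1}$ satisfies
\[
G'_m(\sigma\vec{y})(z') = G_m(\vec{y})(\sigma z') = 0 \qquad \text{for all } z'\in P_{e-s,C},
\]
so $\sigma\vec{y}$ is counted by $N'_{\ell}(\alpha)$. More generally, the family of lifts is parameterized by the image of the multiplication map $\mu\colon P_{s,C}\otimes P_{e+(\ell-1)s,C}\to P_{e+\ell s,C}$ (together with a parallel analysis of $P_{s,C}\otimes P_{e-s,C}\to P_{e,C}$), and its size is bounded below by $q^{(n+1)s}/|\mathrm{coker}(\mu)|^{n+1}$. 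Under the numerical hypotheses of the lemma, Riemann-Roch combined with the vanishing of $H^1(C, L^{\otimes r})$ for $r$ sufficiently large forces $\mathrm{coker}(\mu) = 0$ in the cases $g = 0$, $g = 1$ with $s\ge 2$, and $g\ge 2$ with $\ell\ge 2$ and $s\ge\max(2g-1, 2)$; this yields exactly the bound $q^{(n+1)s}$.

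The third step addresses the exceptional case $g\ge 2$, $\ell = 1$. Here $\mu$ may fail to be surjective, and one invokes Clifford's theorem applied to the line bundle $L^{\otimes s}$ (of degree $s\ge 2g-1$) to obtain $\dim\mathrm{coker}(\mu)\le (g+1)/2$, which contributes the extra factor $q^{(n+1)(g+1)/2}$ to the final bound.

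\textbf{Main obstacle.} The hard part is quantifying the cokernel of $\mu$ in the boundary case $g\ge 2$, $\ell = 1$: away from this case the relevant $H^1$ groups vanish and the comparison is clean, but when $\ell = 1$ one cannot avoid a nontrivial obstruction, and it is precisely Clifford's theorem that produces the sharp $q^{(n+1)(g+1)/2}$ factor appearing in the lemma.
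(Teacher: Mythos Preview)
Your Fourier dualization step is correct and does reduce the claim to $N'_\ell \ge q^{(n+1)s}\, N_{\ell-1}$ (at least when $e-s\ge 2g-1$ so that all the Riemann--Roch computations go through; note the lemma as stated does not assume this). The gap is in the lifting step. Multiplication by a fixed nonzero $\sigma\in P_{s,C}$ gives only an \emph{injection} of the solution subspace $V_{\ell-1}\subset P_{e+(\ell-1)s,C}^{n+1}$ into $V'_\ell\subset P_{e+\ell s,C}^{n+1}$, hence only $N'_\ell\ge N_{\ell-1}$, missing the factor $q^{(n+1)s}$. Allowing $\sigma$ to vary over all of $P_{s,C}$ does not help: the resulting map $P_{s,C}\otimes V_{\ell-1}\to V'_\ell$, $\sigma\otimes\vec{y}\mapsto\sigma\vec{y}$, can have image of dimension far below $\dim V_{\ell-1}+(n+1)s$. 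For instance, if $V_{\ell-1}$ is one-dimensional, spanned by $\vec{y}_0$, the image $\{\sigma\vec{y}_0:\sigma\in P_{s,C}\}$ has dimension $\dim P_{s,C}\le s+1$, not $1+(n+1)s$. Your claim that the family of lifts has size at least $q^{(n+1)s}/|\coker\mu|^{n+1}$ is therefore unjustified: surjectivity of $\mu$ on the ambient space $P_{e+\ell s,C}$ says nothing about the index of $\sigma V_{\ell-1}$ inside $V'_\ell$. Equivalently, after dualization the inequality $N'_\ell\ge q^{(n+1)s}N_{\ell-1}$ is exactly $\rk\Psi_{\ell-1}\ge\rk\Psi'_\ell$, and your identity $\Psi_{\ell-1}(\sigma\vec{x}')(y)=\Psi'_\ell(\vec{x}')(\sigma y)$ does not yield this comparison.

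The paper's approach is genuinely different and is the geometric analogue of Davenport's successive-minima argument. One constructs a rank-$2(n+1)$ vector bundle $E$ on $C$ (via a transition matrix built from the data $\alpha_i L_j$, encoding both the $\vec{x}$-variables and the dual $\vec{y}$-variables simultaneously) such that $K_{\ell-1}=|H^0(C,E)|$ and $K'_\ell=|H^0(C,E(-s\infty))|$. The difference $h^0(E)-h^0(E(-s\infty))$ is then bounded using Riemann--Roch together with the slopes of the Harder--Narasimhan filtration of $E$, which play the role of successive minima. The essential global input is that $\det E$ has a prescribed degree (the analogue of the self-duality of Davenport's lattice), and it is this constraint --- absent from any one-sided multiplication-by-$\sigma$ argument --- that forces the bound $(n+1)s$. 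The $(g+1)/2$ correction for $g\ge 2$, $\ell=1$ comes from a Clifford-type bound on one of these HN slopes, not from the cokernel of a multiplication map.
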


\begin{proof}
The proof is essentially the same as that of Proposition 22 in \cite{haseliu2024higher}, which in turn is a geometric reinterpretation of the classical shrinking lemma. 

The idea is to construct a certain vector bundle $E$ over $C$ (which should be understood as the geometric analogue of a lattice and choice of norm) such that the cardinalitites of $H^0\left(C,E\right)$ and $H^0\left(C,E\left(-s\infty\right)\right)$ are precisely $K_{\ell-1}(\alpha)$ and $K_\ell'(\alpha)$, respectively. 

In Section 7 of \cite{haseliu2024higher}, the construction of the vector bundle $E$ works identically if ``$\alpha\gamma$'' in the definition of the transition matrix $\Lambda$ is replaced with the linear combination $\sum_{(i,j)\in I_{k}}\alpha_{i}\left(L_{j}\left(\vec{x}\right)\right)$.

Then, we can bound $\log_{q}\left(K_{\ell-1}(\alpha)/K_\ell'(\alpha)\right)$ by bounding $h^0\left(C, E\right) - h^0\left(C,E\left(-s\infty\right)\right)$. This, in turn, can be achieved by using Riemann-Roch for vector bundles and the slopes of the Harder-Narasimhan filtration of $E$ (the latter should be viewed as successive minima of the corresponding lattice). In particular, the rest of the argument in Section 7 (and Proposition 22) of \cite{haseliu2024higher} goes through identically.

\end{proof}

% \todo{I changed $K_\ell$ to $K'_\ell$ since otherwise there's a clash of notation.}
It is now straightforward but notationally somewhat cumbersome to generalize Lemma~\ref{Le: GeneralShrinking.m=0} to $P_{e,C, k}$ when $k\geq 1$. 
\begin{lem}\label{Le: GeneralShrinking.m>0}
Let $L_{0},\ldots,L_{n}$ be linear forms in $n+1$ variables $x_{0},\ldots,x_{n}$
whose coefficients belong to $P_{(e-s)(\ell-1)+e(d-\ell-1),C,k}$. Also, let
$\alpha\in P_{de,C,k}^{\vee}$. Then, for $\vec{x}\in P_{e,C,k}^{n+1}$,
we can think of $\alpha\left(L_{j}\left(\vec{x}\right)(-)\right)$
as a linear functional $P_{e+(\ell-1)s,C,k}^{\vee}$; for $\vec{x}\in P_{e-s,C,k}^{n+1}$,
we can think of $\alpha\left(L_{j}\left(\vec{x}\right)(-)\right)$
as a linear functional $P_{e+\ell s,C,k}^{\vee}$. 

Write 
\[
K^{(k)}_{\ell-1}\left(\alpha\right)=\#\left\{ \vec{x}\in P_{e,C,k}^{n+1}:\alpha L_{j}\left(\vec{x}\right)=0\in P_{e+(\ell-1)s,C,k}^{\vee}\text{ for all }j\right\} 
\]
and 
\[
K^{(k)}_{\ell}\left(\alpha\right)=\#\left\{ \vec{x}\in P_{e-s,C,k}^{n+1}:\alpha L_{j}\left(\vec{x}\right)=0\in P_{e+\ell s,C,k}^{\vee}\text{ for all }j\right\} .
\]
Then, 
\[
\frac{K^{(k)}_{\ell-1}(\alpha)}{K^{(k)}_{\ell}(\alpha)}\le\begin{cases}
q^{(k+1)\left(n+1\right)s} & g=0\text{ and }s\ge0,\\
q^{(k+1)\left(n+1\right)s} & g=1\text{ and }s\ge2,\\
q^{(k+1)\left(n+1\right)s} & g\ge2,s\ge\max\left(2g-1,2\right),\text{ and }\ell\ge2,\\
q^{(k+1)\left(n+1\right)\left(s+\left(g+1\right)/2\right)} & g\ge2,s\ge\max\left(2g-1,2\right),\text{ and }\ell=1.
\end{cases}
\]
\begin{proof}
We can write 
\[
L_{i}\left(\vec{x}\right)=\sum_{j=0}^{k}t^{j}L_{i,j}\left(\vec{x}\right)
\]
for $0\le i\le n$, where each $L_{i,j}$ is a linear form in the $n+1$
variables $x_{0},\ldots,x_{n}$ with coefficients in $P_{(e-s)(\ell-1)+e(d-\ell-1),C}$. 

Write $\vec{x}=\vec{x}_{0}+t\vec{x}_{1}+\cdots+t^{k}\vec{x}_{k}$
with each $\vec{x}_{i}\in P_{e,C}^{n+1}$ for all $i$ or each $\vec{x}_{i}\in P_{e-s,C}^{n+1}$
for all $i$. Let 
\[
\widetilde{L}_{i,k}\left(\vec{x}_{0},\ldots,\vec{x}_{k}\right)=\sum_{j+\ell=k}L_{i,j}\left(\vec{x}_{\ell}\right),
\]
so that each $\widetilde{L}_{i,k}$ is a linear form in $\left(k+1\right)n$
variables. We can then write 
\[
L_{i}\left(\vec{x}\right)=\sum_{k=0}^{k}t^{k}\widetilde{L}_{i,k}\left(\vec{x}_{0},\ldots,\vec{x}_{k}\right).
\]
Let us also write $\alpha=\alpha_{0}+t\alpha_{1}+\cdots+t^{k}\alpha_{k}$,
so that we have 
\[
\alpha L_{i}\left(\vec{x}\right)=\left(\alpha_{0}\widetilde{L}_{i,0}\right)+t\left(\alpha_{0}\widetilde{L}_{i,1}+\alpha_{1}\widetilde{L}_{i,0}\right)+\cdots+t^{k}\left(\alpha_{0}\widetilde{L}_{i,k}+\cdots+\alpha_{k}\widetilde{L}_{i,0}\right).
\]
and it follows for each $i$ that $\alpha L_{i}\left(\vec{x}\right)=0\in P_{e+(\ell-1)s,C,k}^{\vee}$
if and only if $\alpha_0\tilde{L}_{i,j}+\cdots + \alpha_j \tilde{L}_{i,0}=0$ for $j=0,\dots, k$. Thus we can re-write $K_{\ell-1}\left(\alpha\right)$ and
$K_{\ell}\left(\alpha\right)$ as counting problems for which $k=0$ in
$(k+1)(n+1)$ variables. The result then follows from Lemma~\ref{Le: GeneralShrinking.m=0}.
\end{proof}
\end{lem}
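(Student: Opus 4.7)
The plan is to reduce the statement to the case $k=0$, handled by Lemma~\ref{Le: GeneralShrinking.m=0}, by expanding everything in powers of the jet variable $t$. First, I would write each linear form as $L_i(\vec{x}) = \sum_{j=0}^{k} t^j L_{i,j}(\vec{x})$, where each $L_{i,j}$ is a linear form in $n+1$ variables with coefficients in $P_{(e-s)(\ell-1)+e(d-\ell-1),C}$. Similarly, decompose $\vec{x} = \vec{x}_0 + t\vec{x}_1 + \cdots + t^k \vec{x}_k$ with each $\vec{x}_j$ lying in either $P_{e,C}^{n+1}$ or $P_{e-s,C}^{n+1}$, and decompose $\alpha = \alpha_0 + t\alpha_1 + \cdots + t^k\alpha_k$ with each $\alpha_j \in P_{de,C}^\vee$.

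The key step is to introduce the ``Cauchy-product'' linear forms
\[
\widetilde{L}_{i,k'}(\vec{x}_0,\ldots,\vec{x}_k) \;=\; \sum_{j+\ell=k'} L_{i,j}(\vec{x}_\ell),
\]
which are linear forms in the enlarged space of $(k+1)(n+1)$ variables, so that $L_i(\vec{x}) = \sum_{k'} t^{k'}\widetilde{L}_{i,k'}$. Expanding the product $\alpha L_i(\vec{x})$ in powers of $t$ and separating each coefficient, the single condition $\alpha L_i(\vec{x}) = 0$ in $P^\vee_{e+(\ell-1)s,C,k}$ becomes the system of $k+1$ conditions
\[
\alpha_0 \widetilde{L}_{i,j} + \alpha_1 \widetilde{L}_{i,j-1} + \cdots + \alpha_j \widetilde{L}_{i,0} \;=\; 0 \quad \text{in } P^\vee_{e+(\ell-1)s,C}, \qquad j=0,\ldots,k,
\]
and analogously for the $K^{(k)}_\ell$ counting problem with $P_{e-s,C}^{n+1}$ replacing $P_{e,C}^{n+1}$. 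Each such condition is precisely a linear combination of the form $\sum_{(i',j')\in I_{i,j}} \alpha_{i'}\bigl(L'_{j'}(\vec{x}_\bullet)(-)\bigr)$ appearing in the general framework of Lemma~\ref{Le: GeneralShrinking.m=0}, where $L'_{j'}$ ranges over the $\widetilde{L}_{i,k'}$ and $I_{i,j}$ encodes the convolution indices.

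Having rewritten $K^{(k)}_{\ell-1}(\alpha)$ and $K^{(k)}_\ell(\alpha)$ as counting problems of the shape handled in Lemma~\ref{Le: GeneralShrinking.m=0} but in $(k+1)(n+1)$ variables (stacking the $\vec{x}_j$) with the same coefficient constraints and the same shrinking parameter $s$, applying that lemma directly yields the ratio bound with the number of variables $n+1$ replaced by $(k+1)(n+1)$. This produces exactly the exponents $(k+1)(n+1)s$ and $(k+1)(n+1)(s+(g+1)/2)$ in the four cases. The main obstacle I anticipate is merely bookkeeping: one must verify that the hypothesis on the degrees of the coefficient line bundles in Lemma~\ref{Le: GeneralShrinking.m=0} is preserved (which it is, since passing from $L_i$ to $\widetilde{L}_{i,k'}$ does not change the ambient line bundle of the coefficients), and that the role of $\ell$ in the casework on the exponent is unaffected by the expansion in $t$ (which follows because the shrinking happens uniformly in each $\vec{x}_j$).
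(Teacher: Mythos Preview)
Your proposal is correct and follows essentially the same approach as the paper: expand $L_i$, $\vec{x}$, and $\alpha$ in powers of $t$, introduce the Cauchy-product forms $\widetilde{L}_{i,k'}$, unwind the vanishing condition coefficient-by-coefficient, and recognize the result as an instance of Lemma~\ref{Le: GeneralShrinking.m=0} in $(k+1)(n+1)$ variables. Your write-up is in fact slightly more careful than the paper's in explicitly matching the conditions to the index-set framework of Lemma~\ref{Le: GeneralShrinking.m=0} and checking that the coefficient degrees and the role of $\ell$ are preserved.
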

Let $k \geq 1$ and $s\geq 0$ be integers. Given $\alpha \in P_{de,C, m}^\vee$, where $m\geq k$ let us define 
\[
N^{(k)}_s(\alpha)\coloneqq \#\left\{ \left(x^{(1)},\ldots,x^{(d-1)}\right)\in\left(P_{e-s,C,k}^{n+1}\right)^{d-1}\colon 
\begin{array}{l}
\alpha(\Psi_j(\vec{x}^{(1)},\dots, \vec{x}^{(d-1)})x)\equiv 0 \bmod t^{k+1} \\ \text{ for all }x\in P_{e+(d-1)s,C, k}\text{ and }0\leq j\leq n \end{array}
\right\}.
\]
We thus have $N^{(k)}(\alpha)=N^{(k)}_0(\alpha)$ in the notation of Sections~\ref{Se: WeylDiff.S(alpha)} and \ref{Se: WeylDiff.S(alpha,beta)}. The previous result applied $d-1$ times immediately gives the following result.
\begin{lem}\label{Le: DavShrinking}
Let $\alpha\in P_{de,C,k}^{\vee}$. For $g=0$, we have 
\[
\frac{N^{(k)}(\alpha)}{N^{(k)}_{s}\left(\alpha\right)}\le q^{\left(k+1\right)\left(d-1\right)\left(n+1\right)s}
\]
for all $s\ge0$. 

For $g\ge1$ and $s\ge\max\left(2g-1,2\right)$, we have 
\[
\frac{N^{(k)}(\alpha)}{N^{(k)}_{s}\left(\alpha\right)}\le\begin{cases}
q^{\left(k+1\right)\left(d-1\right)\left(n+1\right)s} & \text{if }g=1,\\
q^{\left(k+1\right)\left(n+1\right)\left(\left(d-1\right)s+\left(g+1\right)/2\right)} & \text{else}.
\end{cases}
\]
\end{lem}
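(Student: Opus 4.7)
The plan is to apply Lemma \ref{Le: GeneralShrinking.m>0} iteratively $d-1$ times, shrinking the variables $\vec{y}^{(1)},\dots,\vec{y}^{(d-1)}$ one at a time from $P_{e,C,k}^{n+1}$ down to $P_{e-s,C,k}^{n+1}$, while simultaneously the ambient test space for the linear functional grows from $P_{e,C,k}$ to $P_{e+(d-1)s,C,k}$. Concretely, for $0 \le \ell \le d-1$, let $M_\ell(\alpha)$ denote the number of tuples $(\vec{y}^{(1)},\dots,\vec{y}^{(d-1)})$ with $\vec{y}^{(i)} \in P_{e-s,C,k}^{n+1}$ for $i \le \ell$ and $\vec{y}^{(i)} \in P_{e,C,k}^{n+1}$ for $i > \ell$, such that $\alpha(\Psi_j(\vec{y}^{(1)},\dots,\vec{y}^{(d-1)})y) \equiv 0 \bmod t^{k+1}$ for all $y \in P_{e+\ell s,C,k}$ and $0 \le j \le n$. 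Then $M_0(\alpha) = N^{(k)}(\alpha)$ and $M_{d-1}(\alpha) = N^{(k)}_s(\alpha)$.

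To bound $M_{\ell-1}(\alpha)/M_\ell(\alpha)$, fix all $\vec{y}^{(i)}$ for $i\ne \ell$ and view $\Psi_j(\vec{y}^{(1)},\dots,\vec{y}^{(\ell-1)},\vec{x},\vec{y}^{(\ell+1)},\dots,\vec{y}^{(d-1)})$ as a linear form $L_j(\vec{x})$ in the $\ell$th slot. By multilinearity of $\Psi_j$, since $\ell-1$ of the fixed entries lie in $P_{e-s,C,k}^{n+1}$ and $d-\ell-1$ in $P_{e,C,k}^{n+1}$, the coefficients of $L_j$ live in $P_{(e-s)(\ell-1)+e(d-\ell-1),C,k}$. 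This matches the setup of Lemma \ref{Le: GeneralShrinking.m>0} with parameter $\ell$: before shrinking, $\vec{x} \in P_{e,C,k}^{n+1}$ and the functional $\alpha(L_j(\vec{x})(-))$ lives on $P_{e+(\ell-1)s,C,k}$; after shrinking, $\vec{x} \in P_{e-s,C,k}^{n+1}$ and the functional lives on $P_{e+\ell s,C,k}$. Summing the resulting pointwise inequality over the unshrunken variables yields
\[
\frac{M_{\ell-1}(\alpha)}{M_\ell(\alpha)} \le \begin{cases} q^{(k+1)(n+1)s} & \text{in cases 1--3 of Lemma \ref{Le: GeneralShrinking.m>0}},\\ q^{(k+1)(n+1)(s+(g+1)/2)} & \text{if } g\ge 2,\ s\ge\max(2g-1,2),\ \ell=1.\end{cases}
\]

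Multiplying these $d-1$ bounds produces the claimed estimates. For $g=0$, or $g=1$ with $s\ge 2$, every step contributes $q^{(k+1)(n+1)s}$, giving the bound $q^{(k+1)(d-1)(n+1)s}$. For $g\ge 2$ and $s\ge\max(2g-1,2)$, only the first step ($\ell=1$) contributes the larger factor $q^{(k+1)(n+1)(s+(g+1)/2)}$, while the remaining $d-2$ steps each contribute $q^{(k+1)(n+1)s}$, giving $q^{(k+1)(n+1)((d-1)s+(g+1)/2)}$. The entire argument is essentially bookkeeping; the only subtle point is to ensure that the $\ell=1$ bound of Lemma \ref{Le: GeneralShrinking.m>0} is invoked exactly once in the $g\ge 2$ case, which is automatic in this order of shrinking.
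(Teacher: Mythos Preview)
Your argument is correct and is exactly the approach the paper takes: the paper's entire proof is the one-line remark that Lemma~\ref{Le: GeneralShrinking.m>0} ``applied $d-1$ times immediately gives'' the result, and you have carefully written out that iteration via the intermediate counts $M_\ell(\alpha)$. The only (harmless) slip is the phrase ``summing over the unshrunken variables''---you mean summing over all the fixed variables $\vec{y}^{(i)}$ with $i\neq \ell$, which is what makes $M_{\ell-1}(\alpha)\le C\cdot M_\ell(\alpha)$ follow from the pointwise bound $K_{\ell-1}^{(k)}(\underline{y})\le C\cdot K_\ell^{(k)}(\underline{y})$.
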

We now aim at choosing the parameter $s$ in such a way that if $(\vec{x}^{(1)},\dots, \vec{x}^{(d-1)})$ is counted by $N^{(k)}_s(\alpha)$, then this forces $\Psi_j(\vec{x}^{(1)},\dots, \vec{x}^{(d-1)})=0$ for $0\leq j\leq n$.
\begin{lem}\label{Le: DiophApprox=0}
Let $\alpha \in P_{de,C,m}^\vee$ and $0\leq k \leq m$. Suppose $\alpha$ factors through $Z$ minimally and $(\vec{x}^{(1)},\dots, \vec{x}^{(d-1)})\in (P_{e-s, C, k}^{n+1})^{d-1}$ is given. If 
\[
s>\max\left(\frac{\deg Z-e+2g-2}{d-1},e-\frac{\deg Z}{d-1}\right)
\]
and $\alpha\left(\Psi_{j}\left(x^{(1)},\cdots,x^{(d-1)}\right)x\right)\equiv 0 \bmod t^{k+1}$ holds for all $x\in P_{e+(d-1)s, C, k}$ and $0\leq j \leq n$, then in fact
$\Psi_{j}\left(x^{(1)},\cdots,x^{(d-1)}\right)\equiv 0 \bmod t^k$ for all $0\leq j \leq n$.
\end{lem}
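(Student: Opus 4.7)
The plan is to reduce to the scalar case $k=0$ by expanding everything $t$-adically, and then for $k=0$ to run a contradiction argument exploiting the minimality of $Z$.

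First, I would expand $\alpha=\sum_{c=0}^m\alpha_c t^c$, $\Psi_j=\sum_{a=0}^k \Psi_{j,a}t^a$, and a generic $x=\sum_{b=0}^k x_b t^b$ with $x_b\in P_{e+(d-1)s,C}$. Collecting the coefficient of $t^i$ in $\alpha(\Psi_j\cdot x)$ for each $0\le i\le k$ and then separating by the free variables $x_b$, the hypothesis becomes: for every $0\le r\le k$, every $0\le j\le n$, and every $y\in P_{e+(d-1)s,C}$,
\[\sum_{a+c=r}\alpha_c(\Psi_{j,a}\cdot y)=0.\]
I would then induct on $r$: assuming $\Psi_{j,0}=\cdots=\Psi_{j,r-1}=0$, the $r$-th identity collapses to $\alpha_0(\Psi_{j,r}\cdot y)=0$ for every $y\in P_{e+(d-1)s,C}$, so the whole lemma reduces to the statement that for any $\Phi\in P_{(d-1)(e-s),C}$, if $\alpha_0(\Phi\cdot y)=0$ for every $y\in P_{e+(d-1)s,C}$, then $\Phi=0$ (here we use that $\alpha\bmod t=\alpha_0$ still factors minimally through $Z$).

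Suppose for contradiction that $\Phi\neq 0$, and let $D=\divisor(\Phi)$, an effective divisor of degree $(d-1)(e-s)$. Writing $M$ for the line bundle containing $\Phi$, multiplication by $\Phi$ fits into the short exact sequence
\[0\to L^{\otimes d}\otimes M^{-1}\xrightarrow{\ \Phi\ }L^{\otimes d}\to L^{\otimes d}|_{D}\to 0,\]
so on global sections the image of multiplication is exactly $H^0(C,L^{\otimes d}(-D))$. The first hypothesis rearranges as $e+(d-1)s>\deg Z+2g-2\geq 2g-2$, whence $H^1(C,L^{\otimes d}(-D))=0$ by Serre duality and the restriction sequence $0\to H^0(C,L^{\otimes d}(-D))\to P_{de,C}\to P_{de,D}\to 0$ is short exact. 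The hypothesis $\alpha_0(\Phi\cdot-)=0$ thus means $\alpha_0$ vanishes on the subspace $H^0(C,L^{\otimes d}(-D))$, i.e.\ $\alpha_0\sim D$. Minimality of $Z$ among effective divisors through which $\alpha_0$ factors forces $\deg Z\le\deg D=(d-1)(e-s)$, contradicting the second hypothesis $s>e-\deg Z/(d-1)$.

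The main obstacle is really the $t$-adic bookkeeping in the reduction step: one needs to correctly identify which free variables $x_b$ give which Taylor coefficients and check that the resulting system disentangles into a clean induction across the triple of indices $(a,b,c)$. The geometric core of the proof (the contradiction with minimality after passing to $k=0$) is the classical Dirichlet-type denominator argument in disguise; both numerical hypotheses play a distinct role: the first ensures that the restriction sequence is short exact so that ``$\alpha_0\sim D$'' is a well-defined assertion, while the second is what actually produces the contradiction with minimality of $Z$.
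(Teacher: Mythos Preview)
Your argument is correct and matches the paper's structure: the $t$-adic expansion and induction reducing to the scalar case $k=0$ is exactly what the paper does, and for that base case the paper simply cites Lemma~23 of \cite{haseliu2024higher}, whereas you supply a self-contained geometric proof by passing to $D=\divisor(\Phi)$, identifying the image of multiplication by $\Phi$ with $H^0(C,L^{\otimes d}(-D))$, and contradicting minimality of $Z$. One minor remark on your commentary at the end: in your argument the first numerical hypothesis is not actually needed, since $\alpha_0\sim D$ is equivalent to $\alpha_0$ vanishing on the kernel of $P_{de,C}\to P_{de,D}$, and that kernel is $H^0(C,L^{\otimes d}(-D))$ whether or not the restriction map is surjective (one can always extend a functional from a quotient). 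Only the second inequality $s>e-\deg Z/(d-1)$ is doing work in the contradiction.
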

\begin{proof}
Write $\Psi_{j}\left(x^{(1)},\cdots,x^{(d-1)}\right)=\rho_{0}+\cdots+t^{k}\rho_{k}$
and $\alpha=\alpha_{0}+\cdots+t^{k}\alpha_{k}$, and note that then $\alpha\rho\equiv 0 \bmod t^{k+1}$
is equivalent to 
\[
\alpha_{0}\rho_{0}=\alpha_{0}\rho_{1}+\alpha_{1}\rho_{0}=\cdots=\alpha_{0}\rho_{k}+\cdots+\alpha_{k}\rho_{0}=0.
\]
By Lemma~23 of \cite{haseliu2024higher}, we have $\alpha_{0}\rho_{0}=0$
implies $\rho_{0}=0$ under the assumptions on $s$. It then follows that
$\alpha_{0}\rho_{1}=0$, and so we may apply the same reasoning to
conclude that $\rho_{1}=0$, and so forth, until we obtain $\rho_{k}=0$.
This implies $\rho=0$, as desired.
\end{proof}

Next, we bound the dimensions of the jet schemes of the variety cut out by the $\Psi_i$.
\begin{lem}\label{Le: DimEstimate}
Let $V\subset \A^{(n+1)(d-1)}$ be the variety defined by $\Psi_0=\cdots = \Psi_n=0$ and $J_k(V)$ its $k$th jet scheme. Then, 
\[
\dim J_{k}(V)\le\left(k+1\right)\left(d-1\right)\left(n+1\right)-\left(n+1\right)\left(\left\lfloor \frac{k}{d-1}\right\rfloor +1\right).
\]
\end{lem}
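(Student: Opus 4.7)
I would proceed by induction on $k$. The base case $k = 0$ amounts to showing $\dim V \leq (n+1)(d-2)$. Consider the projection $V \to \A^{(n+1)(d-2)}$ obtained by forgetting $\vec{x}^{(d-1)}$. The fiber over $(\vec{y}^{(1)}, \ldots, \vec{y}^{(d-2)})$ is the kernel of the $(n+1) \times (n+1)$ matrix with entries $M_{j,\ell} = \Psi_j(\vec{y}^{(1)}, \ldots, \vec{y}^{(d-2)}, e_\ell)$. Restricted to the diagonal $\vec{y}^{(1)} = \cdots = \vec{y}^{(d-2)} = \vec{y}$, this matrix is a non-zero scalar multiple of the Hessian of $F$ at $\vec{y}$, which is non-degenerate at some point since $F$ is smooth. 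Hence $\det M \not\equiv 0$ and the generic fiber is zero-dimensional.

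For the inductive step I would stratify $J_k(V)$ according to $b := \min_i \operatorname{ord}_t(\vec{x}^{(i)}(t))$. When $b \geq 1$, all $\vec{x}^{(i)}_0 = 0$; writing $\vec{x}^{(i)}(t) = t \tilde{\vec{x}}^{(i)}(t)$ and using multilinearity of $\Psi_j$, the defining equations reduce to $\Psi_j(\tilde{\vec{x}}^{(i)}(t)) \equiv 0 \pmod{t^{k+2-d}}$. If $k < d-1$ this is vacuous and the bound $(n+1)(d-1)k$ lies well below the target. If $k \geq d-1$, the truncation $\tilde{\vec{x}}^{(i)}(t) \bmod t^{k+2-d}$ defines a jet in $J_{k+1-d}(V)$, and the truncation fiber has dimension $(n+1)(d-1)(d-2)$. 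Combining the inductive hypothesis with the identity $\lfloor (k+1-d)/(d-1) \rfloor = \lfloor k/(d-1) \rfloor - 1$ yields a stratum bound lying exactly $(n+1)(d-2)$ below the target.

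When $b = 0$, some $\vec{x}^{(i)}_0 \neq 0$; by symmetry we may assume $\vec{x}^{(1)}_0 \neq 0$. Project to $(\vec{x}^{(i)}(t))_{i \geq 2} \in \A^{(n+1)(d-2)(k+1)}$. The fiber over $(\vec{x}^{(i)}(t))_{i \geq 2}$ is the kernel of the linear system $M(t) \vec{x}^{(1)}(t) \equiv 0 \pmod{t^{k+1}}$, where $M(t)$ is the $(n+1) \times (n+1)$ matrix with $M_{j,\ell}(t) = \Psi_j(e_\ell, \vec{x}^{(2)}(t), \ldots, \vec{x}^{(d-1)}(t))$. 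Smith normal form over $\F_q[t]/t^{k+1}$ identifies the kernel's dimension with $\operatorname{ord}_t(\det M(t))$. The polynomial map $\Phi \colon (\vec{x}^{(i)})_{i \geq 2} \mapsto \det M(t)$ is dominant: at a base point where all $\vec{x}^{(i)}_0 = \vec{y}^\ast$ lie on the diagonal (with non-degenerate Hessian) and $\vec{x}^{(i)}_p = 0$ for $p \geq 1$, a perturbation of $\vec{x}^{(i)}_p$ affects only the coefficient of $t^p$ in $\det M(t)$ to first order, so the differential is block-diagonal in $p$ with non-zero blocks coming from $\operatorname{adj}(M(0))$ paired against the Hessian. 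Hence $\{\operatorname{ord}_t(\det M(t)) \geq s\}$ has codimension at least $s$, and summing fiber and base contributions gives stratum dimension at most $(n+1)(d-2)(k+1)$, which is within the target.

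The main obstacle is the explicit verification of the dominance of $\Phi$ in the $b = 0$ case: one must carefully compute the differential of $\det M(t)$ at the chosen base point and invoke non-degeneracy of the Hessian of $F$ at a generic point to conclude that the Jacobian has full rank.
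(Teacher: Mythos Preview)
Your inductive stratification is a natural plan, but two steps have genuine gaps.

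\textbf{Base case.} Knowing that the projection $V \to \A^{(n+1)(d-2)}$ has zero-dimensional generic fibre does not by itself bound $\dim V$: a component of $V$ lying entirely over $\{\det M = 0\}$ would have fibres of positive dimension, and nothing in your argument controls its size. You would need to stratify further by $\operatorname{rank}(M)$ and bound each determinantal locus, which is extra work you have not done.

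\textbf{The $b=0$ stratum.} You deduce $\operatorname{codim}\{\operatorname{ord}_t(\det M(t)) \geq s\} \geq s$ from surjectivity of $d\Phi$ at your chosen base point. But at that point $\det M(0)\neq 0$ (the Hessian at $\vec{y}^\ast$ is nondegenerate), so the point lies \emph{outside} every locus $\{\operatorname{ord}\geq s\}$ for $s\geq 1$. Smoothness of $\Phi$ at a point outside a closed set says nothing about that set's codimension; compare $\Phi(x,y)=(x,xy)$, which has surjective differential at $(1,0)$ yet $\Phi^{-1}(0,0)=\{x=0\}$ has codimension $1$, not $2$. To repair this you would need, for each $s$, a point of $\{\operatorname{ord}=s\}$ at which the first $s$ components of $\Phi$ have independent differentials, and that is far from obvious.

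The paper sidesteps both issues with a direct diagonal trick. Intersect $J_k(V)$ with the diagonal $\Delta=\{\vec{x}^{(1)}=\cdots=\vec{x}^{(d-1)}\}\subset\A^{(k+1)(n+1)(d-1)}$. On $\Delta$ the forms $\Psi_j$ specialise to nonzero multiples of $\partial_j F$, so $\Delta\cap J_k(V)$ consists of jets $\vec{x}$ with $\nabla F(\vec{x})\equiv 0\pmod{t^{k+1}}$. Writing $\vec{x}=t^\ell\vec{x}'$ with $\vec{x}'\not\equiv 0\pmod t$, smoothness of $F$ forces $\ell(d-1)\geq k+1$, so the first $\lfloor k/(d-1)\rfloor+1$ coefficients of $\vec{x}$ vanish and $\dim(\Delta\cap J_k(V))\leq(n+1)(k-\lfloor k/(d-1)\rfloor)$. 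The affine intersection inequality $\dim J_k(V)\leq\dim(\Delta\cap J_k(V))+\operatorname{codim}\Delta$ then gives exactly the stated bound, with no induction and no stratification.
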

\begin{proof}
    We identify $x^{(i)} \in \A^{(k+1)(n+1)}$ for $1\leq i\leq d-1$ with $x_0^{(i)}+tx_1^{(i)}+\cdots + t^kx_k^{(i)}$, where $x_j^{(i)}\in\A^n$. Since $V\subset \A^{(d-1)(n+1)}$, we have $J_k(V)\subset J_k(\A^{(d-1)(n+1)})=\A^{(k+1)(n+1)(d-1)}$. Under this description, we have 
    \[
    J_k(V)=\{({x}^{(1)},\dots, {x}^{(d-1)})\in\A^{(k+1)(n+1)(d-1)}\colon \Psi_i({x}^{(1)},\dots, {x}^{(d-1)})\equiv 0 \bmod t^{k+1} \text{ for }0\leq i \leq n\}.
    \]
%    The coefficient of $\psi_i(\bm{x}^{(1)},\dots, \bm{x}^{(d-1)})$ of $s^j$ is given by 
%\begin{equation}\label{Multilinear}
%\sum_{j_1+\cdots +j_{d-1}=j}\psi_i(\bm{x}^{(1)}_{j_1},\dots, \bm{x}^{(d-1)}_{j_{d-1}}).
%\end{equation}
Let now 
\[
\Delta =\{ ({x}^{(1)},\dots, {x}^{(d-1)})\in \A^{(k+1)(n+1)(d-1)}\colon {x}^{(1)}=\cdots = {x}^{(d-1)}\}
\]
be the diagonal, which has dimension $(k+1)(n+1)$. Suppose that $({x},\dots, {x})\in\Delta \cap J_k(V)$. We must have $\nabla F({x})\equiv {0} \bmod t^{k+1}$. If ${x}\neq 0$, we can write ${x}=t^l{x}'$ with ${x}'\neq {0} \bmod t$. Then as $F$ is non-singular, this implies $l(d-1)\geq k+1$, as otherwise the reduction of ${x}'$ modulo $s$ would produce a singular point of $F$. This is equivalent to ${x}_0=\cdots = {x}_{\left\lfloor \frac{k}{d-1}\right\rfloor}=0$ and hence 
\[
\dim \Delta\cap J_k(V) \leq (n+1)\left(k-\left\lfloor \frac{k}{d-1}\right\rfloor\right).
\]

Therefore, observing that the intersection $\Delta \cap J_k (V)$ is non-empty because $\Delta$ and $J_k(V)$ are cut out by homogeneous polynomials, we obtain
\begin{align*}
    \dim J_k(V) 
    &\le \dim (\Delta \cap J_k(V)) +(k+1)(n+1)(d-1)-\dim \Delta \\
    &\le (n+1)\left(k-\left\lfloor \frac{k}{d-1}\right\rfloor\right)+(k+1)(n+1)(d-2),
\end{align*}
from which the result follows. 
\end{proof}

\begin{lem}\label{general_point_count}
Let $V$ be a variety in $\A_{K(C)}^N$, and let $D$ be a divisor on $C$. Then, the number of $K(C)$-points of $V$ such that each coordinate satisfies the norm conditions defined by $D$ is bounded above by $Cq^{\dim V h^0(C,\mathcal{O}(D))},$ where $C$ depends only on $N$ and the degree of $V$.
\end{lem}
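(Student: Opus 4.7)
The proof is a routine induction on $r = \dim V$, closely paralleling the classical argument for bounding lattice points on affine varieties. The base case $r = 0$ is immediate: $V$ consists of at most $\deg V$ closed points, so the bound holds with constant $\deg V$.

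For the inductive step, first decompose $V$ into irreducible components; there are at most $\deg V$ such components, each of degree at most $\deg V$, so it suffices to prove the bound for each component (absorbing the resulting factor of $\deg V$ into the final constant). I may therefore assume $V$ is irreducible of dimension $r \ge 1$. Since $V$ is not a single point, some coordinate projection $\pi_i \colon V \to \A^1_{K(C)}$ is non-constant; after relabeling coordinates, take $i = 1$. Because $V$ is irreducible and $\pi_1$ is non-constant, every fiber $V_c = V \cap \{x_1 = c\}$ is a proper closed subvariety of $V$, so $\dim V_c \le r - 1$; by Bezout, $\deg V_c \le \deg V$.

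Any point counted by the lemma satisfies $x_1 \in H^0(C, \mathcal{O}(D))$, which gives at most $q^{h^0(C, \mathcal{O}(D))}$ choices of $c$. For each such $c$, identify $V_c$ with a subvariety of $\A^{N-1}_{K(C)}$ and apply the inductive hypothesis: the number of points of $V_c$ whose remaining $N - 1$ coordinates all lie in $H^0(C, \mathcal{O}(D))$ is at most $C' q^{(r-1) h^0(C, \mathcal{O}(D))}$, with $C'$ depending only on $N - 1$ and $\deg V$. Multiplying the two contributions yields the desired bound $C q^{r\, h^0(C, \mathcal{O}(D))}$, with $C$ depending only on $N$ and $\deg V$ (crudely, a bound of the shape $(\deg V)^{O(N)}$ suffices).

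\textbf{Main obstacle.} There is essentially none; the argument is elementary. The only point requiring care is the bookkeeping of the constant $C$ through the induction, which unwinds to an explicit polynomial dependence on $N$ and $\deg V$. An alternative approach via generic Noether normalization (projecting $V$ finitely onto $\A^r$) is also possible, but is complicated by the need to preserve the coordinate-wise norm condition under the change of variables, which the straight induction above sidesteps.
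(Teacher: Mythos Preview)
Your proposal is correct and follows essentially the same argument as the paper: reduce to the (geometrically) irreducible case, handle $\dim V=0$ trivially, then choose a coordinate $x_i$ on which $V$ projects non-constantly, slice by $x_i=c$ for each of the at most $q^{h^0(C,\mathcal{O}(D))}$ admissible values of $c$, and apply the inductive hypothesis to the fibers (whose dimension drops and whose degree is controlled by B\'ezout). The only cosmetic difference is that the paper phrases the slicing as ``$V$ meets every hyperplane $x_i=\alpha$ properly,'' which is the same statement as your ``$\pi_i$ is non-constant on the irreducible $V$.''
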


\begin{proof}
    The proof is identical to that of Lemma 3.6 in \cite{timbook}.
    
    We may assume $V$ is geometrically irreducible. The case $\dim V =0$ is trivial. Suppose $V$ has positive dimension, then, and note that we can find some index $1\le i\le N$ such that $V$ intersects the hyperplane $H_\alpha\colon x_i = \alpha$ properly for every $\alpha$. 
    
    Let $M_V(D)$ denote the quantity we want to bound. Then, we have 
    \[M_V(D) \le \sum_{\alpha \in K(C)\colon \alpha\in H^0\left(C,\mathcal{O}(D)\right)}M_{V\cap H_\alpha}(D).\]

    By the induction hypothesis, each $M_{V\cap H_\alpha}(D) = O\left(q^{(\dim V -1)h^0(C,\mathcal{O}(D))}\right)$, so the result follows.
\end{proof}

Combining Corollaries \ref{Le: DimEstimate} and \ref{general_point_count}, we obtain the following.
\begin{cor}\label{Cor: AuxPointCount}
    Let $k\geq 0$ and $0\leq s \leq e$ be integers. We have 
    \[
    \#\left\{\vec{x}\in (P_{e-s,C, k}^{n+1})^{(d-1)}\colon 
    \begin{array}{l}
    \Psi_j(\vec{x}^{(1)},\dots, \vec{x}^{(d-1)})=0\\
    \text{for }0\leq j\leq n
    \end{array}
    \right\} \ll q^{h^0(C, L(-s\infty))(n+1)((k+1)(d-1)-\left(\lfloor \frac{k}{d-1}\rfloor +1\right))}.
    \]
\end{cor}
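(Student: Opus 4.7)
The plan is to reinterpret the count as counting $K(C)$-points of the jet scheme $J_k(V)$, base-changed to $K(C)$, whose coordinates lie in a prescribed finite-dimensional $\F_q$-subspace of $K(C)$, and then invoke Lemmas~\ref{Le: DimEstimate} and \ref{general_point_count} in sequence.

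First, I would unwind the identifications. A tuple $\vec{x} \in (P_{e-s,C,k}^{n+1})^{d-1}$ decomposes as $\vec{x} = \vec{x}_0 + t\vec{x}_1 + \cdots + t^k \vec{x}_k$ with each $\vec{x}_i \in (H^0(C,L(-s\infty))^{n+1})^{d-1}$. Since the multilinear forms $\Psi_0,\dots,\Psi_n$ have coefficients in $\F_q$, the condition $\Psi_j(\vec{x}^{(1)},\dots,\vec{x}^{(d-1)})=0$ in $H^0(C,L^{d-1}(-s(d-1)\infty)) \otimes_{\F_q} \F_q[t]/t^{k+1}$ is, by the defining functor of points of jet schemes, equivalent to saying that the $(k+1)$-tuple $(\vec{x}_0,\dots,\vec{x}_k)$ defines a point of $J_k(V)$ with coordinates in $H^0(C,L(-s\infty))$. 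Fixing a nowhere-zero rational section of $L(-s\infty)$ identifies $H^0(C,L(-s\infty))$ with $H^0(C,\mathcal{O}(D))$ inside $K(C)$ for a suitable divisor $D$, and the condition that each coordinate lie in this subspace is precisely the norm condition defined by $D$ appearing in Lemma~\ref{general_point_count}.

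With this setup, I would apply Lemma~\ref{general_point_count} to (each irreducible component of) $J_k(V) \times_{\F_q} K(C) \subset \A_{K(C)}^{(k+1)(n+1)(d-1)}$; its degree is bounded purely in terms of $d$, $n$, and $k$, so the sum over components is absorbed by the implied constant. This produces the upper bound $O\bigl(q^{\dim J_k(V)\cdot h^0(C,L(-s\infty))}\bigr)$. Substituting the dimension estimate
\[
\dim J_k(V) \le (k+1)(n+1)(d-1) - (n+1)\bigl(\lfloor k/(d-1)\rfloor + 1\bigr)
\]
from Lemma~\ref{Le: DimEstimate} then yields precisely the exponent claimed in the corollary.

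The argument is essentially bookkeeping once these identifications are made; the only point requiring care is checking that the trivialization of $L(-s\infty)$ correctly translates the ``section'' constraint into the norm condition needed by Lemma~\ref{general_point_count}, and that passing from $V$ to $J_k(V)_{K(C)}$ is compatible with this identification (which follows because $V$ is defined over $\F_q$, so its jet scheme commutes with the base change to $K(C)$). No further analytic input is required.
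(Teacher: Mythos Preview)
Your proposal is correct and follows essentially the same approach as the paper, which simply states the corollary as an immediate consequence of combining Lemmas~\ref{Le: DimEstimate} and~\ref{general_point_count}. You have been more explicit about the identification of sections of $L(-s\infty)$ with elements of $K(C)$ satisfying the norm condition defined by a divisor $D$, but this is exactly the bookkeeping the paper leaves implicit.
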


\subsection{Exponential sum estimates}
We now have everything at hand to present our main estimates for the sums $S(\alpha)$ and $S(\alpha,\beta)$ defined in \eqref{Eq: Def.S(alpha)} and \eqref{Eq: Def.S(alpha,beta)} respectively. 
Before we state our main results, we need to introduce some notation. Given $\alpha\in P_{de,C, m}^\vee$ such that $\alpha$ factors through $Z$ minimally, we define
\begin{equation}\label{Eq: Choose.s.J1}
     s_\alpha=\begin{cases}
       \max\left(\left\lfloor \frac{\deg Z - e + 2g -2}{d-1}\right\rfloor, \left\lfloor e - \frac{\deg Z}{d-1}\right\rfloor\right)+1 &\text{if }g=0,\\
       \max\left(\left\lfloor \frac{\deg Z - e + 2g -2}{d-1}\right\rfloor, \left\lfloor e - \frac{\deg Z}{d-1}\right\rfloor, 2g-2, 1\right)+1 &\text{if }g\geq 1.
    \end{cases}
    \end{equation}
In addition, for a non-negative integer $g$ we set 
\begin{equation}\label{Eq: Defi.f(g)}
f(g)=\begin{cases}
    0 &\text{if }g\in \{0,1\},\\
    (g+1)/2 &\text{if }g\geq 2.
\end{cases}
\end{equation}
\begin{prop}\label{Prop: S(alpha).MainEstimate}
    Let $m\geq 1$ and $\alpha \in P_{de,C, m}^\vee$. Suppose that $\alpha \sim_{\text{min}}Z$. For $s=s_\alpha$, we have 
\begin{align*}
\left(\frac{|S(\alpha)|}{\#P^{n+1}_{e,C,m}}\right)^{2^{d-2}}\ll (\#P_{e,C, m-m'}^{n+1})^{-(d-1)}&q^{(m-m'+1)(n+1)((d-1)s+f(g))}\\
\times &q^{h^0(C,L(-s\infty))(n+1)((m-m'+1)(d-1)-(\lfloor \frac{m-m'}{d-1}\rfloor +1))}    
\end{align*}
    In particular, if $e-s\geq 2g-1$, then
    \[
    \left(\frac{|S(\alpha)|}{\#P^{n+1}_{e,C,m}}\right)^{2^{d-2}}\ll q^{(m-m'+1)(n+1)f(g)-(n+1)(e-s-g+1)(\lfloor \frac{m-m'}{d-1}\rfloor +1)},
    \]
    where in both cases the implied constant is independent of $q$.
    \end{prop}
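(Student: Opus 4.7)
The plan is to chain together the four main tools developed in the subsection: Weyl differencing (Lemma~\ref{Le: WeylDiff.S(alpha)}), the shrinking lemma (Lemma~\ref{Le: DavShrinking}), the Diophantine approximation step (Lemma~\ref{Le: DiophApprox=0}), and the geometric point count (Corollary~\ref{Cor: AuxPointCount}). Set $k = m - m'$ throughout.

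First, Lemma~\ref{Le: WeylDiff.S(alpha)} gives
\[
\left(\frac{|S(\alpha)|}{\#P^{n+1}_{e,C,m}}\right)^{2^{d-2}} \le (\#P^{n+1}_{e,C,m-m'})^{-(d-1)} N^{(k)}(\alpha),
\]
so it suffices to bound $N^{(k)}(\alpha)$. Next, I would check that $s = s_\alpha$ as defined in \eqref{Eq: Choose.s.J1} satisfies the hypothesis $s \ge \max(2g-1,2)$ for $g \ge 1$ required by Lemma~\ref{Le: DavShrinking} (this is immediate from the max in the definition of $s_\alpha$), and apply that lemma to obtain
\[
N^{(k)}(\alpha) \le q^{(k+1)(n+1)((d-1)s + f(g))}\, N^{(k)}_{s}(\alpha),
\]
where the $f(g)$ collects the $g=0,1$ and $g \ge 2$ cases uniformly per \eqref{Eq: Defi.f(g)}.

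Then I would verify that $s = s_\alpha$ strictly exceeds $\max\bigl(\tfrac{\deg Z - e + 2g -2}{d-1},\, e - \tfrac{\deg Z}{d-1}\bigr)$, which holds by construction because each term inside the outer max defining $s_\alpha$ is $\lfloor \cdot \rfloor + 1$. By Lemma~\ref{Le: DiophApprox=0}, the tuples counted by $N^{(k)}_s(\alpha)$ in fact satisfy $\Psi_j(\vec{x}^{(1)}, \dots, \vec{x}^{(d-1)}) \equiv 0 \bmod t^{k+1}$ for every $j$, i.e.\ vanish as elements of $P_{(e-s)(d-1), C, k}$. Such tuples are precisely the $K(C)$-points of the $k$-th jet scheme $J_k(V)$ (where $V$ is as in Lemma~\ref{Le: DimEstimate}) whose coordinates lie in $H^0(C, L(-s\infty))$, so Corollary~\ref{Cor: AuxPointCount} gives
\[
N^{(k)}_s(\alpha) \ll q^{h^0(C, L(-s\infty))\,(n+1)\,((k+1)(d-1) - (\lfloor k/(d-1)\rfloor + 1))}.
\]
Multiplying these three estimates together and using $k = m - m'$ yields the first displayed bound.

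For the simplified form when $e - s \ge 2g - 1$, I would apply Riemann--Roch in two places: $\#P^{n+1}_{e,C,m-m'} = q^{(m-m'+1)(n+1)(e-g+1)}$ and $h^0(C, L(-s\infty)) = e - s - g + 1$. Plugging these into the first bound, the coefficient of $(m-m'+1)(n+1)(d-1)$ reduces to $-(e-g+1) + s + (e - s - g + 1) = 0$, and everything collapses to the stated exponent $(m-m'+1)(n+1)f(g) - (n+1)(e-s-g+1)(\lfloor (m-m')/(d-1)\rfloor + 1)$. The only step with any substance is checking that the definition of $s_\alpha$ is tailored precisely so that both Lemma~\ref{Le: DavShrinking} and Lemma~\ref{Le: DiophApprox=0} apply simultaneously; the rest is bookkeeping.
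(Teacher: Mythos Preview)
Your proof is correct and follows essentially the same approach as the paper: chain Lemma~\ref{Le: WeylDiff.S(alpha)}, Lemma~\ref{Le: DavShrinking}, Lemma~\ref{Le: DiophApprox=0}, and Corollary~\ref{Cor: AuxPointCount} with $k=m-m'$, then simplify via Riemann--Roch when $e-s\ge 2g-1$. Your explicit check that the cancellation of the $(m-m'+1)(n+1)(d-1)$ coefficient comes out to $-(e-g+1)+s+(e-s-g+1)=0$ is a bit more detailed than the paper's ``after canceling some of the terms,'' but the argument is the same.
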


\begin{proof}
    With our choice of $s$, it is clear that Lemmas~\ref{Le: DavShrinking} and \ref{Le: DiophApprox=0} are applicable. In particular, successively applying Lemmas~\ref{Le: WeylDiff.S(alpha)} and \ref{Le: DavShrinking} with $k=m-m'$ gives
    \begin{align}
    \begin{split}\label{Eq: Ayayayay}
        \left(\frac{|S(\alpha)|}{\#P^{n+1}_{e,C,m}}\right)^{2^{d-2}}&\leq (\#P_{e,C, m-m'}^{n+1})^{-(d-1)}q^{(m-m'+1)(n+1)((d-1)s+f(g))}N^{(m-m')}_s(\alpha).
    \end{split}
    \end{align}
    By Lemma~\ref{Le: DiophApprox=0} and Corollary~\ref{Cor: AuxPointCount} we have 
    \begin{equation}\label{Eq: Bibibibib}
    N_s(\alpha)\ll q^{h^0(C,L(-s\infty))(n+1)((m-m'+1)(d-1)-(\lfloor \frac{m-m'}{d-1}\rfloor +1))},
    \end{equation}
    which once combined with \eqref{Eq: Ayayayay} verifies the first assertion of Proposition~\ref{Prop: S(alpha).MainEstimate}. For the second, note that if $e-s\geq 2g-1$, then $h^0(C,L(-s\infty))=e-s-g+1$ by Riemann--Roch. Similarly, $h^0(C, L)=e-g+1$, yielding $\#P_{e,C, m-m'}=q^{(e-g+1)(m-m'+1)}$. After canceling some of the terms, \eqref{Eq: Ayayayay} together with \eqref{Eq: Bibibibib} easily give the second assertion of Proposition~\ref{Prop: S(alpha).MainEstimate}.
\end{proof}
\begin{prop}\label{Prop: S(alpha,beta).MainEstimate}
    Let $m\geq 1$ and $\alpha,\beta \in P_{de,C, m}^\vee$. Suppose that $\alpha \sim_{\text{min}}Z$ and $\beta \sim_{\text{min}}W$. Then
    \[
    \left(\frac{|S(\alpha,\beta)|}{\#P_{e,C,m}^{2(n+1)}}\right)^{2^{d-2}}\ll q^{(n+1)R(\alpha,\beta)},
    \]
    where $R(\alpha,\beta)$ is defined to be the minimum of the quantities
    \[(m-m'+1)((d-1)(s_\alpha -(e-g+1))+f(g))+h^0(C,L(-s_\alpha \infty))\left(\left(m-m'+1\right)\left(d-1\right)-\left(\left\lfloor \frac{m-m'}{d-1}\right\rfloor +1\right)\right)\] and \[(m+1)((d-1)(s_\beta-(e-g+1))+f(g))+h^0(C,L(-s_\beta \infty))\left(\left(m+1\right)\left(d-1\right)-\left(\left\lfloor \frac{m}{d-1}\right\rfloor+1\right)\right).\]

    In particular, if $\max(e-s_\alpha,e-s_\beta)\geq 2g-1$, then 
    \[
    \left(\frac{|S(\alpha,\beta)|}{\#P_{e,C,m}^{2(n+1)}}\right)^{2^{d-2}}\ll q^{(n+1)((m+1)f(g)-M(\alpha,\beta))},
    \]
    where $M(\alpha,\beta)$ is defined to be 
\[
\begin{array}{ll}
    m'f(g)+\left(e-s_{\alpha}-g+1\right)\left\lceil \frac{m-m'+1}{d-1}\right\rceil  & \text{if }e-s_{\alpha}\ge2g-1>e-s_{\beta},\\
    \left(e-s_{\beta}-g+1\right)\left\lceil \frac{m+1}{d-1}\right\rceil  & \text{if }e-s_{\beta}\ge2g-1>e-s_{\alpha},\\
    \max\left\{ m'f(g)+\left(e-s_{\alpha}-g+1\right)\left\lceil \frac{m-m'+1}{d-1}\right\rceil ,\left(e-s_{\beta}-g+1\right)\left\lceil \frac{m+1}{d-1}\right\rceil \right\}  & \text{if }e-s_{\alpha},e-s_{\beta}\ge2g-1.
    \end{array}
\]
\end{prop}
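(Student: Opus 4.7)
The plan is to mimic the proof of Proposition~\ref{Prop: S(alpha).MainEstimate}, but applied in parallel to the two shrinking counts bounded by Lemma~\ref{Le: WeylDiff.S(alpha,beta)}. Specifically, that lemma gives
\[
|S(\alpha,\beta)|^{2^{d-2}}\leq (\#P^{2(n+1)}_{e,C, m})^{2^{d-2}}(\#P^{n+1}_{e,C, m-m'})^{-(d-1)}\min\{N^{(m-m')}(\alpha),\ (\#P^{n+1}_{e,C, m'-1})^{-(d-1)}N^{(m)}(\beta)\},
\]
so it suffices to bound $N^{(m-m')}(\alpha)$ and $N^{(m)}(\beta)$ separately and then take the smaller of the two resulting exponents.

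For each of these two quantities, I would run the same two-step reduction used for $S(\alpha)$. First, since $s_\alpha$ is defined to exceed $\max\bigl(\frac{\deg Z - e + 2g -2}{d-1},\ e - \frac{\deg Z}{d-1}\bigr)$ (and similarly for $s_\beta$), Lemma~\ref{Le: DavShrinking} is applicable with $s=s_\alpha$, $k=m-m'$ in the first case and $s=s_\beta$, $k=m$ in the second, giving
\[
N^{(m-m')}(\alpha)\leq N_{s_\alpha}^{(m-m')}(\alpha)\cdot q^{(m-m'+1)(n+1)((d-1)s_\alpha+f(g))},
\]
and the analogous bound for $\beta$ with $k=m$. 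Second, by Lemma~\ref{Le: DiophApprox=0} every tuple counted by $N_{s_\alpha}^{(m-m')}(\alpha)$ satisfies $\Psi_j(\vec{x}^{(1)},\dots,\vec{x}^{(d-1)})=0$ modulo $t^{m-m'+1}$, so Corollary~\ref{Cor: AuxPointCount} yields
\[
N_{s_\alpha}^{(m-m')}(\alpha)\ll q^{h^0(C,L(-s_\alpha\infty))(n+1)((m-m'+1)(d-1)-(\lfloor\frac{m-m'}{d-1}\rfloor+1))},
\]
and likewise for $\beta$ with $m-m'$ replaced by $m$.

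Plugging these into Lemma~\ref{Le: WeylDiff.S(alpha,beta)} and dividing by $(\#P^{2(n+1)}_{e,C,m})^{2^{d-2}}$ gives two expressions for $\log_q\bigl((|S(\alpha,\beta)|/\#P^{2(n+1)}_{e,C,m})^{2^{d-2}}\bigr)$, whose minimum yields $(n+1)R(\alpha,\beta)$; here one uses the telescoping identity $\#P^{n+1}_{e,C,m-m'}\cdot\#P^{n+1}_{e,C,m'-1}=\#P^{n+1}_{e,C,m}$ to simplify the prefactor in the $\beta$-branch. For the second assertion, when $e-s\geq 2g-1$, Riemann--Roch gives $h^0(C,L(-s\infty))=e-s-g+1$ and $\#P_{e,C,k}=q^{(k+1)(e-g+1)}$; substituting these in, the $(d-1)$-factors cancel and the expression collapses to $(m-m'+1)f(g)-(e-s_\alpha-g+1)(\lfloor\frac{m-m'}{d-1}\rfloor+1)$ in the $\alpha$-branch and $(m+1)f(g)-(e-s_\beta-g+1)(\lfloor\frac{m}{d-1}\rfloor+1)$ in the $\beta$-branch. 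Using the identity $\lfloor k/(d-1)\rfloor+1=\lceil(k+1)/(d-1)\rceil$ and rewriting $(m-m'+1)f(g)=(m+1)f(g)-m'f(g)$ reconciles these with the claimed $M(\alpha,\beta)$, and the three cases on which of $e-s_\alpha,e-s_\beta$ are at least $2g-1$ follow from taking the relevant minimum.

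The main obstacle is not conceptual but bookkeeping: one must carefully match exponents through the shrinking-plus-dimension chain while respecting the asymmetry between the $\alpha$-branch (at jet-order $m-m'$) and the $\beta$-branch (at jet-order $m$). The case split for the second assertion is driven entirely by when Riemann--Roch is available to replace $h^0(C,L(-s\infty))$ by $e-s-g+1$; if only one of $e-s_\alpha, e-s_\beta$ is $\geq 2g-1$ then one term of the $\min$ is no longer usable in the simplified form, which is precisely why $M(\alpha,\beta)$ is defined by three separate cases rather than a single uniform $\max$.
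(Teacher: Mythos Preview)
Your proposal is correct and follows essentially the same approach as the paper's own proof: apply Lemma~\ref{Le: WeylDiff.S(alpha,beta)}, then bound each of $N^{(m-m')}(\alpha)$ and $N^{(m)}(\beta)$ via the chain Lemma~\ref{Le: DavShrinking} $\to$ Lemma~\ref{Le: DiophApprox=0} $\to$ Corollary~\ref{Cor: AuxPointCount}, and simplify using $\#P_{e,C,k}=q^{(k+1)(e-g+1)}$ together with the telescoping identity. Your explicit mention of the identities $\lfloor k/(d-1)\rfloor+1=\lceil(k+1)/(d-1)\rceil$ and $(m-m'+1)f(g)=(m+1)f(g)-m'f(g)$ makes the passage to $M(\alpha,\beta)$ slightly more transparent than in the paper, but the argument is the same.
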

\begin{proof}
    By Lemmas~\ref{Le: WeylDiff.S(alpha,beta)} and~\ref{Le: DavShrinking} we have 
    \begin{equation}\label{Eq: 123}
    \left(\frac{|S(\alpha,\beta)|}{\#P^{2(n+1)}_{e,C,m}}\right)^{2^{d-2}}\leq (\#P_{e,C,m-m'}^{n+1})^{-(d-1)}\min\left(N^{(m-m')}(\alpha), (\#P_{e,C, m'-1}^{n+1})^{-(d-1)}N^{(m)}(\beta)\right).
    \end{equation}
    Successively applying Lemma~\ref{Le: DavShrinking}, Lemma~\ref{Le: DiophApprox=0} and Corollary~\ref{Cor: AuxPointCount} gives 
    \begin{align}
        \begin{split}\label{Dq: 1234}
            N^{(m-m')}(\alpha)&\leq q^{(m-m'+1)(n+1)((d-1)s_\alpha +f(g))}N_{s_\alpha}^{(m-m')}(\alpha)\\
            & \ll q^{(m-m'+1)(n+1)((d-1)s_\alpha +f(g))+h^0(C, L(-s_\alpha\infty))(n+1)((m-m'+1)(d-1)-(\lfloor \frac{m-m'}{d-1}\rfloor +1))}.
        \end{split}
    \end{align}
    The same reasoning applied to $N^{(m)}(\beta)$ yields
    \begin{equation}\label{Eq: 12345}
         N^{(m)}(\beta)\ll q^{(m+1)(n+1)((d-1)s_\beta +f(g))+h^0(C,L(-s_\beta \infty))(n+1)((m+1)(d-1)-(\lfloor \frac{m}{d-1}\rfloor +1))}.
    \end{equation}
    Upon observing that $\#P_{e,C,m}=q^{(m+1)(e-g+1)}$ and $\#P_{e,C, m-m'}=q^{(m-m'+1)(e-g+1)}$, the first assertion now follows after rearranging some of the terms in the exponents. For the second, suppose that $e-s_\alpha \geq 2g-1$. Then by Riemann--Roch, we have $h^0(C,L(-s_\alpha \infty))=e-s_\alpha -g +1$, so that \eqref{Eq: 123} and \eqref{Dq: 1234} give 
    \begin{align*}
        \left(\frac{|S(\alpha,\beta)|}{\#P^{2(n+1)}_{e,C,m}}\right)^{2^{d-2}} & \ll q^{(n+1)((m-m'+1)(-(e-g+1)(d-1)+(d-1)s_\alpha +f(g) +(e-s_\alpha -g+1)(d-1))-(e-s_\alpha - g+1)(\lfloor \frac{m-m'}{d-1}\rfloor +1))}\\
        &= q^{(n+1)((m-m'+1)f(g) -(e-s_\alpha -g+1)(\lfloor \frac{m-m'}{d-1}\rfloor +1))}.
    \end{align*}
    Similarly, when $e-s_\beta\geq 2g-1$, then by Riemann--Roch we have $h^0(C,L(-s_\beta\infty))=e-s_\beta -g +1$. Thus, by \eqref{Eq: 123} and \eqref{Eq: 12345} we have 
    \begin{align*}
        \left(\frac{|S(\alpha,\beta)|}{\#P^{2(n+1)}_{e,C,m}}\right)^{2^{d-2}} & \ll  q^{(n+1)((m+1)(-(e-g+1)(d-1)+(d-1)s_\beta +f(g)+(e-s_\beta -g +1)(d-1))-(e-s_\beta -g +1)(\lfloor \frac{m}{d-1}\rfloor +1))}\\ 
        & = q^{ (n+1)((m+1)f(g)-(e-s_\beta -g+1)(\lfloor \frac{m}{d-1}\rfloor+1))}.
    \end{align*}
    The second claim now readily follows by comparing these last two estimates with the definition of $M(\alpha,\beta)$. 
\end{proof}
\section{Putting everything together}
In this final section, we apply the estimates from Propositions~\ref{Prop: S(alpha).MainEstimate} and~\ref{Prop: S(alpha,beta).MainEstimate} to prove Propositions~\ref{Prop: MinorArcGoal.J1} and~\ref{Prop: GoalMinorArc.J1(Jm)}. Having already dealt with the major arc contributions in Sections~\ref{Sec: Major.M(m)} and~\ref{Sec: Major.J1(Jm)}, this will be enough to complete the proofs of Propositions \ref{Prop: Main.Jet.Term} and \ref{Prop: Main.Jet.Can}, and hence of   main Theorems~\ref{Thm: Main.CanSing} and~\ref{Thm: Main.TermSing} 
\subsection{Canonical singularities}\label{Sec: PuttingEverythingTogether.Can}
Given $n,d,g,e\in \N$ satisfying the hypotheses of Theorem~\ref{Thm: Main.CanSing}, our goal is to show that 
\begin{equation}\label{Eq: CanSing.MinorArcs}
\#P_{e,C,m}^{-(n+1)}\sum_{\deg Z \geq e-2g+2}\sum_{\alpha \sim_{\text{min}}Z}|S(\alpha)| \to 0\quad \text{as }q\to\infty.
\end{equation}
By Lemma~\ref{Le: DirichletApprox} any $\alpha \in P_{de,C,m}^\vee$ factors through some effective divisor $Z\subset C$ of degree at most $de/2+1$. As there are $O_{d,e,g}(1)$ possibilities for integers $D$ satisfying $e-2g+2\leq D \leq de/2+1$, it suffices to consider the contribution from $\deg Z =D$ for a fixed $D$ in \eqref{Eq: CanSing.MinorArcs}. We shall denote this contribution by $N(D)$.

Let 
\begin{equation}\label{Eq: Defi.s.CanSing}
    s= \begin{cases}
        \max\left(\left\lfloor \frac{D-e+2g-2}{d-1}\right\rfloor, \left\lfloor e-\frac{D}{d-1}\right\rfloor \right)+1 &\text{if }g=0,\\
        \max\left(\left\lfloor \frac{D-e+2g-2}{d-1}\right\rfloor, \left\lfloor e-\frac{D}{d-1}\right\rfloor, 2g-2,1 \right)+1 &\text{if }g\geq 1
    \end{cases}
\end{equation}
and observe that the first term in the maximum dominates the second if $de/2-g+1<D$ and the second dominates the first if $D\leq de/2-g+1$. Suppose that $e-s\geq 2g -1$. Under this assumption, Proposition~\ref{Prop: S(alpha).MainEstimate} hands us the estimate 
\begin{equation}\label{Eq: UpperBound.S(alpha).minor.cansing}
\frac{|S(\alpha)|}{\#P^{n+1}_{e,C,m}}\ll q^{(n+1)((m-m'+1)f(g)-(e-s-g+1)(\lfloor \frac{m-m'}{d-1}\rfloor+1))/2^{d-2}}
\end{equation}
whenever $\alpha$ factors through $Z$ minimally with $\deg Z = D$. The number of $\alpha \in P_{de,C, m}$ that factor through a given $Z$ is $\#P^\vee_{de,C, m-1}$ times the number of $\alpha_0\in P_{de,C}^\vee$ that factor through $Z$, so that in total there are $O(q^{D+m(de+1-g)})$ available $\alpha$. In addition, $C$ is a smooth curve over $\F_q$ and hence has $O(q^D)$ effective divisors of degree $D$. In particular, combining these observations with \eqref{Eq: UpperBound.S(alpha).minor.cansing} shows that 
\[
N(D) \ll q^{2D+m(de+1-g)+(n+1)((m-m'+1)f(g)-(e-s-g+1)(\lfloor \frac{m-m'}{d-1}\rfloor +1 ))/2^{d-2}}.
\]
In particular, if we define 
\begin{equation}\label{Eq: Defi.A}
A\coloneqq 2^{d-2}\frac{2D+m(de+1-g)}{(e-s-g+1)(\lfloor\frac{m-m'}{d-1}\rfloor +1) -(m-m'+1)f(g)},
\end{equation}
and $e-s\geq 2g-1$ holds, then to establish Proposition~\ref{Prop: MinorArcGoal.J1} it suffices to show that $n+1>A$.
\subsubsection{$d\geq 3$, $g=0$ and $1\leq e \leq d-2$}
Recall that in this case we assume $n+1>2^{d-2}d(2d+1)$ when $e=1$ and $n+1>2^{d-1}(d-1)(de+2)$ when $e\geq 2$. Note also $e-s=\lceil D/(d-1)\rceil -1\geq 2g -1$, so that 
\begin{align*}
A&=2^{d-2}\frac{2D+m(de+1)}{\left\lceil\frac{D}{d-1}\right\rceil(\left\lfloor \frac{m-m'}{d-1}\right\rfloor +1)}\\
&\leq 2^{d-2}\frac{2D+m(de+1)}{\max(1, D/(d-1))\max(1, m/(2(d-1)))}, 
\end{align*}
since $\lfloor (m-m')/(d-1)\rfloor +1 =\lceil (m-m'+1)/(d-1)\rceil \geq \lfloor (m+1)/2\rfloor /(d-1)\geq m/(2(d-1))$ and $\lceil D/(d-1)\rceil \geq \max(1,D/(d-1)$. For any $D$, the last fraction is increasing in $m$ when $1\leq m \leq 2(d-1)$ and decreasing when $m>2(d-1)$. Therefore, it is maximal at $m=2(d-1)$ and hence 
\begin{equation}\label{Eq: AAAAAAAAAAAAAAAAA}
A\leq 2^{d-1}\frac{D+(d-1)(de+1)}{\max(1,D/(d-1))}.
\end{equation}
When $e=1$, then we assume that $d\geq 4$, implying $D\leq d/2+1\leq d-1$. In this case we thus have 
\[
A\leq 2^{d-2}(d+2+2(d-1)(d+1))=2^{d-2}d(2d+1),
\]
which is sufficient. When $e\geq 2$, then \eqref{Eq: AAAAAAAAAAAAAAAAA} is maximal at $D=d-1$, so that 
\[
A\leq 2^{d-1}(d-1)(de+2),
\]
which is again satisfactory.
\subsubsection{$d\geq 3$}
Recall that if $d\geq 3$ and $g\geq 1$, we assume that $e>e_0$, where
\begin{equation}\label{Eq: Assumption.e.minor.arcs}
    e_0\coloneqq  (g-1)(2^{d-1}(d-1)^2(2d-1)+2)+(d-1)(g+(d-1)f(g))(2^{d-1}(d-1)(d^2-d+1)+1),
\end{equation}
while we assume $e\geq d-2$ when $g=0$. We begin by showing that if $d\geq 3$, then in fact 
\[
s=\max\left(\left\lfloor\frac{D-e+2g-2}{d-1}\right\rfloor, \left\lfloor e-\frac{D}{d-1}\right\rfloor\right)+1
\]
also holds for $g\geq 1$. First, suppose $D\leq de/2-g+1$. Then
\[
\left\lfloor e-\frac{D}{d-1}\right\rfloor \geq e-1-\frac{de/2-g+1}{d-1} = \frac{e(d/2-1)-d+g}{d-1}.
\]
Our assumption on $e$ implies that $e\geq 2^{d-1}(d-1)^2(1+f(g))$ and a straightforward computation reveals that 
\[
\frac{2^{d-1}(d-1)^2(1+f(g))(d/2-1)-d+g}{d-1}\geq \max\{2g-2,1\}. 
\]
Second, if $de/2-g+1<D$, then 
\[
\left\lfloor \frac{D-e+2g-2}{d-1}\right\rfloor \geq \frac{e(d/2-1)+g-1}{d-1}\geq \frac{e(d/2-1)+g-d}{d-1}\geq \max\{2g-2,1\}
\]
by what we have shown in the first case and hence completes the verification of the claim. 

Next, we show that 
\begin{equation}\label{Eq: CanSing.Minor.e-s.large}
    e-s\geq 2g-1.
\end{equation}
If $D\leq de/2-g+1$, then we have 
\begin{equation}\label{DDDDDDD}
e-s = \left\lceil \frac{D}{d-1}\right\rceil -1 \geq \frac{e-2g+2}{d-1}-1.
\end{equation}
If $g=0$, then $e\geq 1$ and the right hand side is at least $3/(d-1)-1\geq -1$, which is sufficient. If $g\geq 1$, then \eqref{Eq: Assumption.e.minor.arcs} implies that $(e-2g+2)/(d-1)\geq 2^{d-1}(d-1)(1+f(g))\geq 2g-1$, which is also satisfactory. When $D>de/2-g+1$, then 
\[
e-s=e-\left\lfloor \frac{D-e+2g-2}{d-1}\right\rfloor -1 \geq e-\frac{e(d/2-1)+2g-1}{d-1}-1=\frac{ed/2+2g-d}{d-1}.
\]
When $g=0$, this is at least $-1$ because $d\geq 2$. When $g\geq 1$, then $2g\geq 3-2g$ holds, which together with $d\geq 3>2$ implies 
\[
e-s\geq \frac{e-2g+2}{d-1}-1,
\]
which is precisely the inequality \eqref{DDDDDDD}. We have thus already shown that this implies $e-s\geq 2g-1$.

In particular, to establish \eqref{Eq: CanSing.MinorArcs} it is enough to show $n+1>A$, where $A$ is defined in \eqref{Eq: Defi.A}.

{\textit{Case I:} $D\leq de/2 -g +1$.} Note that we are always in this case when $g=0$. Here, $e-s+1=\lceil D/(d-1)\rceil$ and hence 
\begin{align}
\begin{split}\label{Fire}
A &\leq 2^{d-2}(d-1)\frac{2D+m(de+1-g)}{(D -g(d-1))(\lfloor (m-m')/(d-1)\rfloor +1)-(m-m'+1)(d-1)f(g)}.
\end{split}
\end{align}
Let us first treat the case $m\leq 2(d-1)$. This implies $(m-m')/(d-1)= (\lfloor (m+1)/2 \rfloor -1)/(d-1)<1$ and hence $\lfloor (m-m')/(d-1)\rfloor =0$. Using $m\leq 2(d-1)$, the inequality \eqref{Fire} gives 
\begin{align}
A &\leq 2^{d-1}(d-1)\frac{D+(d-1)(de+1-g)}{D-g(d-1)-(m-m'+1)(d-1)f(g)}\nonumber\\
& \leq 2^{d-1}(d-1)\frac{D+(d-1)(de+1-g)}{D-g(d-1)-(d-1)^2f(g)}.\label{CaseIAAAAAAAA}
\end{align}
Note that the last expression is decreasing in $D$ and hence is maximal at $D=e-2g+2$, which implies that
\begin{align}
    A &\leq 2^{d-1}(d-1)\frac{e-2g+2+(d-1)(de+1-g)}{e-2g+2-g(d-1)-(d-1)^2f(g)}.\label{Rain}
\end{align}
When $g=0$, then the right hand side is strictly increasing in $e$, whence 
\begin{equation}
A < 2^{d-1}(d-1)(d^2-d+1).
\end{equation}
Thus, $A<n+1$ when $g=0$. If $g\geq 1$, then the right hand side of \eqref{Rain} is strictly decreasing in $e$ and one can check that when evaluated at $e_0$ defined in \eqref{Eq: Assumption.e.minor.arcs}
it is identical to $2^{d-1}(d-1)(d^2-d+1)+1$. Of course, here we have chosen $e_0$ to ensure this identity. As we assume that $e>e_0$, this shows that $A< n+1$, which is satisfactory.

Next, let us treat the opposite case $m>2(d-1)$. As $\lfloor (m-m')/(d-1)\rfloor +1 \geq (m-m'+1)/(d-1)$, from \eqref{Fire} we get 
\begin{align*}
A& \leq 2^{d-2}(d-1)^2\frac{2D +m(de+1-g)}{(m-m'+1)(D-g(d-1)-f(g)(d-1)^2)}\\
 &\leq 2^{d-1}(d-1)^2\frac{2D+m(de+1-g)}{m(D-g(d-1)-f(g)(d-1)^2)},
\end{align*}
where we used that $m-m'+1=\lfloor (m+1)/2\rfloor \geq m/2$. The last expression is strictly decreasing in $m$. In particular, $m>2(d-1)$ implies that 
\[
A< 2^{d-1}(d-1)\frac{D+(d-1)(de+1-g)}{D-g(d-1)-f(g)(d-1)^2}.
\]
This is precisely the term in \eqref{CaseIAAAAAAAA} and we have already seen that this implies $n+1>A$ when we treated the case $m\leq 2(d-1)$. 

\textit{Case II:} $de/2-g+1 < D \leq de/2+1$. In this case we have $e-s+1=e-\lfloor (D-e+2g-2)/(d-1)\rfloor$, so that 
\begin{align}
    A&= 2^{d-2}\frac{2D+m(de+1-g)}{(e-\lfloor \frac{D-e+2g-2}{d-1}\rfloor -g)(\lfloor \frac{m-m'}{d-1}\rfloor +1) -(m-m'+1)f(g)}\nonumber\\
    & \leq 2^{d-2}\frac{de+2+m(de+1-g)}{(e-\frac{de/2-e+2g-1}{d-1}-g)(\lfloor \frac{m-m'}{d-1}\rfloor+1)-(m-m'+1)f(g)}\nonumber\\
    &= 2^{d-2}(d-1)\frac{de+2+m(de+1-g)}{(de/2-2g+1-g(d-1))(\lfloor \frac{m-m'}{d-1}\rfloor +1)-(m-m'+1)f(g)(d-1)}.\label{Eq: FFFFFFFFFFFFFFFF}
\end{align}
Let us first assume that $m\leq 2(d-1)$. Then 
\[
A\leq 2^{d-2}(d-1)\frac{de+2+2(d-1)(de+1-g)}{de/2-2g+1-g(d-1)-f(g)(d-1)^2}.
\]
Let us denote the right hand side by $h(e)$. Then $h(e)$ is a strictly decreasing function of $e$ for each $g\geq 1$. As we assume that $e>e_0$, where $e_0$ is defined in \eqref{Eq: Assumption.e.minor.arcs}, we thus have $A<h(e_0)$. With a rather involved computation or a computer algebra system one can check that $h(e_0)\leq 2^{d-1}(d-1)(d^2-d+1)$. This verifies that $A<n+1$. 

Next, we turn to the case $m>2(d-1)$. Then \eqref{Eq: FFFFFFFFFFFFFFFF} yields 
\begin{align}
A &\leq 2^{d-2}(d-1)^2\frac{de+2+m(de+1-g)}{(m-m'+1)(de/2-2g+1-g(d-1)-f(g)(d-1)^2)}\nonumber\\
&\leq 2^{d-1}(d-1)^2\frac{de+2+m(de+1-g)}{m(de/2-2g+1-g(d-1)-f(g)(d-1)^2)}, \label{FAFAFAFA}   
\end{align}
where we used that $m-m'+1=\lfloor (m+1)/2\rfloor \geq m/2$. The term in \eqref{FAFAFAFA} is strictly decreasing in $m$, and as we assume that $m>2(d-1)$, we get 
\[
A< 2^{d-2}(d-1)\frac{de+2+2(d-1)(de+1-g)}{de/2-2g+1-g(d-1)-f(g)(d-1)^2}=h(e).
\]
As we have already shown that $h(e)<n+1$ under our assumptions on $e$, this completes our treatment of Case II. 
\subsubsection{$d=2$}
Recall that we only treat $d=2$ when $g\geq 1$, in which case we assume $e>e_0$, where
\begin{equation}\label{Eq: Defi.e0.d=2}
    e_0=19g+7f(g)-3.
\end{equation}
Note that when $d=2$, the definition of $s$ simplifies to 
\[
s= \max\left( D-e+2g-2, e-D, 2g-2, 1\right)+1
\]
and $D$ satisfies $e+2-2g\leq D \leq e+1$. If $e+2-2g \leq D \leq e$, then $D-e+2g-2\leq 2g-2$ and $e-d\leq 2g-2$, so that
\[
s=\begin{cases}
    2&\text{if }g=1,\\
    2g-1 &\text{if }g\geq 2.
\end{cases}
\]
While if $D=e+1$, then $s=\max(2g-1,-1, 2g-2,1)+1=2g$. In either case we have $e-s \geq e-2g \geq e_0-2g\geq 2g-1$ by \eqref{Eq: Defi.e0.d=2}. In particular, it suffices to show that $n+1>A$ with $A$ defined in \eqref{Eq: Defi.A}. It transpires from \eqref{Eq: Defi.A} that $A$ is maximal when $D=e+1$ and $s=2g$. Therefore, we have 
\[
A\leq \frac{2(e+1)+m(2e+1-g)}{(m+1-m')(e-3g+1-f(g))}.
\]
When $m=2$, then $m+1-m'=\lfloor (m+1)/2\rfloor =1$ and hence 
\begin{equation}\label{Eq: GenericCase.d=2.minor}
A \leq 2\frac{3e+2-g }{e-3g+1-f(g)}.    
\end{equation}
Let us denote the right hand side by $h(e)$. Then $h(e)$ is strictly increasing in $e$ for any $g\geq 1$. As we assume $e>e_0$, this implies 
\[
A < 2\frac{56g+21f(g)-7}{16g+6f(g)-2}=7.
\]
As we assume $n+1\geq 2^{d-1}(d-1)(d^2-d+1)+1=7$, this completes our treatment of the case $d=2$ and hence of Theorem~\ref{Thm: Main.CanSing}.

\subsection{Terminal singularities}\label{Sec: PuttingEverythingTogether.Term}

Given $n,d,g,e\in \N$ satisfying the hypotheses of Theorem \ref{Thm: Main.TermSing}, our goal is to show that 
\begin{equation}\label{Goal: TermSing}
    \#P_{e,C,m}^{-2(n+1)}\sum_{\substack{\alpha,\beta \in P_{de,C, m}^\vee \\ \max(\deg \alpha, \deg \beta) \geq e-2g +2}}|S(\alpha,\beta)|\to 0 \quad\text{ as }q\to \infty. 
\end{equation}
Let  us write $\deg(\alpha)=D_{\alpha}$ and $\deg(\beta)=D_{\beta}$. By Lemma~\ref{Le: DirichletApprox} we have $D_\alpha,D_\beta \leq de/2+1$. Recall the definitions of $s_\alpha$ and $s_\beta$ in Proposition \ref{Prop: S(alpha,beta).MainEstimate}] and assume that $\max(e-s_\alpha,e-s_\beta)\geq 2g-1$. Then, by Proposition~\ref{Prop: S(alpha,beta).MainEstimate}
 we have 
 \[
 \frac{|S(\alpha,\beta)|}{\#P_{e,C, m}^{2(n+1)}}\ll q^{(n+1)((m+1)f(g)-M(\alpha,\beta))/2^{d-2}}. 
 \]
There are at most $q^{D_{\alpha}+m\left(de-g+1\right)}$ linear
functionals on $P_{de,C,m}$ that factor through any specific degree
$D_{\alpha}$ effective divisor and $O\left(q^{D_{\alpha}}\right)$ such effective divisors
(likewise for $D_{\beta}$). As there are $O_{d,e}(1)$ possibilities for $D_\alpha$ and $D_\beta$, to verify \eqref{Goal: TermSing} it suffices to show \[
2\left(D_{\alpha}+D_{\beta}+m\left(de-g+1\right)\right)+\left((m+1)f(g)-M(\alpha,\beta)\right)\left(n+1\right)/2^{d-2}<0,
\]
which, if we set 
\[
A'=2^{d-1}\frac{D_{\alpha}+D_{\beta}+m\left(de-g+1\right)}{M(\alpha,\beta)-(m+1)f(g)},
\]
is equivalent to 
\[
n+1>A'.
\]
Under the assumptions of Theorem~\ref{Thm: Main.TermSing}, we will show that we always have $\max(e-s_\alpha,e-s_\beta)\geq 2g-1$ and verify the inequality $n+1>A'$. This will therefore complete our proof of Theorem \ref{Thm: Main.TermSing}.
\subsubsection{$d\geq 4$, $g=0$ and $1\leq e \leq d-1$}
Note that for $g=0$, we have 
\begin{align*}
M(\alpha,\beta)&=\max\left(\left\lceil \frac{D_\alpha}{d-1}\right\rceil \left\lceil\frac{m-m'+1}{d-1}\right\rceil, \left\lceil\frac{D_\beta}{d-1}\right\rceil \left\lceil \frac{m+1}{d-1}\right\rceil\right)\\
    &\geq \max\left(\left\lceil\frac{D_\alpha}{d-1}\right\rceil \max\left(1,\frac{m}{2(d-1)}\right), \left\lceil \frac{D_\beta}{d-1}\right\rceil \max\left(1, \frac{m+1}{d-1}\right)\right).
\end{align*}
In particular, if $\max(D_\alpha,D_\beta)\leq d-1$, then we obtain 
\[
A'\leq 2^{d-1}\frac{D_\alpha+D_\beta +m(de+1)}{\max(1, (m+1)/(d-1))}.
\]
Let us first deal with the case $e=1$ and $d\ge 4$. For $D_\alpha+D_\beta\ge d+1=de+1$, the right-hand side of the above inequality is maximized at $m=d-2$, where it is equal to \[2^{d-1}(D_\alpha+D_\beta + (d-2)(d+1)).\]
Note that $\max(D_\alpha,D_\beta)\le d/2+1,$ so $D_\alpha+D_\beta \ge d+1$ precisely when $D_\alpha = D_\beta = (d+1)/2$ for $d$ odd and when $(D_\alpha,D_\beta)\in {(d/2+1, d/2), (d/2,d/2+1), (d/2+1,d/2+1)}$ for $d$ even.  

In either case, for $D_\alpha + D_\beta \ge d+1$, it follows that \[A'\le 2^{d-1}(d+2+(d-2)(d+1)).\]
For $D_\alpha + D_\beta < d+1$, the right-hand side is instead an increasing function of $m$, so \[A'\le 2^{d-1}(d-1)(d+1).\]

So, for $e=1$ and $d\ge 4$, we have \[A'\le 2^{d-1}d^2 \le 2^{d-2}d(2d+1)<n+1\] by our assumption on $n$ in this case.

Note that 
\[
\min(de/2+1,d-1)=\begin{cases} d/2+1 &\text{if $d\geq 4$ and $e=1$}\\
d-1 &\text{else.}\end{cases}
\]

We shall therefore assume $e>1$ from now on. Then, if $\max(D_\alpha, D_\beta)\le d-1$, we again have
\[A'\le 2^{d-1}\frac{D_\alpha + D_\beta + m(de+1)}{\max(1,(m+1)/(d-1))}\le 2^{d-1}(d-1)(de+1)\eqqcolon B_0,\]
since $D_\alpha + D_\beta \le 2d-2 < de+1$.

Now, suppose $\max(D_\alpha,D_\beta) \ge d$. This gives
\[
A'\leq 2^{d-1}(d-1)\frac{D_\alpha+D_\beta+m(de+1)}{\max\left(D_\alpha \max(1,m/(2(d-1))), D_\beta \max(1, (m+1)/(d-1))\right)}.
\]
When $1\leq m \leq d-2$, then we get 
\begin{equation}\label{Eq: e<=d-3.1}
A'\leq 2^{d-1}(d-1)\frac{2\max(D_\alpha,D_\beta)+(d-2)(de+1)}{\max(D_\alpha,D_\beta)}\leq 2^{d-1}(d-1)(2+(d-2)(de+1)/d)\eqqcolon B_1.
\end{equation}
While if $d-1\leq m \leq 2d-2$, then we obtain
\[
A'\leq 2^{d-1} \frac{D_\alpha+D_\beta+m(de+1) }{\max\left( D_\alpha, D_\beta \frac{m+1}{d-1}\right)}.
\]
The right-hand side is increasing in $D_\beta$ when $D_\beta \leq (d-1)D_\alpha/(m+1)$ and decreasing otherwise. In particular, it is maximal at $D_\beta=(d-1)D_\alpha/(m+1)$. Note that this implies $D_\alpha\geq D_\beta$ and hence $D_\alpha\geq d$. Thus, 
\[
    A'\leq 2^{d-1}(d-1)\frac{D_\alpha(1 +(d-1)/(m+1))+m(de+1)}{D_\alpha }\leq 2^{d-1}(d-1)((1+(d-1)/(m+1))+m(de+1)/d).
\]
The last quantity is increasing in $m$ and hence maximal at $m=2(d-1)$. We obtain 
\begin{equation}\label{Eq: e<=d-3.2}
    A'\leq 2^{d-1}(d-1)(1+(d-1)/(2d-1)+2(d-1)(de+1)/d)\eqcolon B_2,
\end{equation}
say. Next, we consider the case $2d-1\leq m$. Here we have 
\[
A'\leq 2^{d-1}(d-1)^2\frac{D_\alpha+D_\beta +m(de+1) }{\max\left( D_\alpha m/2, D_\beta(m+1)\right)}.
\]
The quantity is maximal when $D_\beta = D_\alpha \frac{m}{2(m+1)}$ and $D_\alpha\geq D_\beta$, so that again $D_\alpha \geq d$ must hold. We get 
\begin{align*}
    A'&\leq 2^{d}(d-1)^2\frac{D_\alpha (1+m/(2(m+1)))+m(de+1)}{D_\alpha m}\\
    &\leq 2^{d}(d-1)^2\frac{d (1+m/(2(m+1)))+m(de+1)}{d m}\\
    &\leq 2^{d}(d-1)^2\frac{d (1+(2d-1)/(4d))+(2d-1)(de+1)}{d (2d-1)},
\end{align*}
where we used that the expression in the second line is decreasing in $m$. A straightforward computation reveals that we always have 
\[
2^{d}(d-1)^2\frac{d (1+(2d-1)/(4d)+(2d-1)(de+1)}{d (2d-1)} < B_2
\]
and also $B_0,B_1 < B_2$. It thus transpires from \eqref{Eq: e<=d-3.1} and \eqref{Eq: e<=d-3.2} that we have 
\[
A' \leq B_2 = 2^{d-1}(d-1)\left(1+\frac{d-1}{2d-1}+\frac{2(d-1)(de+1)}{d}\right) < n+1,
\]
by our assumption on $n$. This concludes our treatment of this case. 
\subsubsection{$d\ge3$}

\paragraph{Case I: $e-2g+2\le D_{\beta}\le de/2-g+1$}

\subparagraph{I.1: $D_{\beta}\ge D_{\alpha}/2$}

Note that $e-s_\beta = \left\lceil \frac{D_{\beta}}{d-1}\right\rceil - 1$, which by the assumptions on $e$, is clearly at least $2g-1$.
We have 
\[
M\left(\alpha,\beta\right)\ge\left(e-s_{\beta}-g+1\right)\left\lceil \frac{m+1}{d-1}\right\rceil =\left(\left\lceil \frac{D_{\beta}}{d-1}\right\rceil -g\right)\left\lceil \frac{m+1}{d-1}\right\rceil ,
\]
so
\[
A'\le2^{d-1}\left(d-1\right)\frac{2D_{\beta}+D_{\beta}+m\left(de-g+1\right)}{\left(D_{\beta}-(d-1)g\right)\left\lceil \frac{m+1}{d-1}\right\rceil -(d-1)(m+1)f(g)}.
\]
If $m+1\le d-1$, the right-hand side is 
\begin{align*}
 & 2^{d-1}\left(d-1\right)\frac{3D_{\beta}+m\left(de-g+1\right)}{D_{\beta}-(d-1)g-(d-1)(m+1)f(g)}\\
 & \le2^{d-1}\left(d-1\right)\frac{3D_{\beta}+\left(d-2\right)\left(de-g+1\right)}{D_{\beta}-(d-1)g-(d-1)^{2}f(g)}\\
 & \le2^{d-1}\left(d-1\right)\frac{3\left(e-2g+2\right)+\left(d-2\right)\left(de-g+1\right)}{\left(e-2g+2\right)-(d-1)g-(d-1)^{2}f(g)}\\
 & \eqqcolon S.
\end{align*}

Note that 
\begin{align*}
 & \frac{3\left(e-2g+2\right)+\left(d-2\right)\left(de-g+1\right)}{\left(e-2g+2\right)-(d-1)g-(d-1)^{2}f(g)}\\
 & =\left(3+d(d-2)\right)-\frac{-\left(3+d(d-2)\right)\left(2g-2+(d-1)g+(d-1)^{2}f(g)\right)+3\left(2g-2\right)+\left(d-2\right)\left(g-1\right)}{\left(e-2g+2\right)-(d-1)g-(d-1)^{2}f(g)}\\
 & =\left(3+d(d-2)\right)-\frac{-((d-2)d+3)(d-1)^{2}f(g)+d(d(-dg+g+2)-5)+g+2}{\left(e-2g+2\right)-(d-1)g-(d-1)^{2}f(g)}.
\end{align*}
Let $T=-((d-2)d+3)(d-1)^{2}f(g)+d(d(-dg+g+2)-5)+g+2$, so that the
above is 
\[
\left(3+d(d-2)\right)-\frac{T}{\left(e-2g+2\right)-(d-1)g-(d-1)^{2}f(g)}.
\]

For $g=0$, $T=d(2d-5)+2>0$. In this case, $A'<2^{d-1}\left(d-1\right)\left(3+d(d-2)\right)=2^{d-1}\left(d-1\right)\left(d^{2}-2d+3\right)$
for all $e$, so $n+1>A'$. 

For $g\ge1$, we have $T\le-d^{3}+3d^{2}-5d+3<0$. In this case, 
\[
2^{d-1}\left(d-1\right)\left(\left(3+d(d-2)\right)+\frac{-T}{\left(e_{0}-2g+2\right)-(d-1)g-(d-1)^{2}f(g)}\right)< 2^{d-1}\left(d-1\right)\left(d^{2}-2d+3\right)+1,
\]
 by computer verification.

If $m+1>d-1$, then 

\begin{align*}
A' & \le2^{d-1}\left(d-1\right)\frac{3D_{\beta}+m\left(de-g+1\right)}{\left(D_{\beta}-(d-1)g\right)\left\lceil \frac{m+1}{d-1}\right\rceil -(d-1)(m+1)f(g)}\\
 & \le2^{d-1}\left(d-1\right)^{2}\frac{3D_{\beta}+m\left(de-g+1\right)}{\left(m+1\right)\left(D_{\beta}-(d-1)g-(d-1)^{2}f(g)\right)}\\
 & \eqqcolon S'.
\end{align*}

We have 
\[
de-g+1-3D_{\beta}\text{ is }\begin{cases}
<0 & \text{if }D_{\beta}>\frac{de-g+1}{3},\\
=0 & \text{if }D_{\beta}=\frac{de-g+1}{3},\\
>0 & \text{if }D_{\beta}<\frac{de-g+1}{3}.
\end{cases}
\]

If $D_{\beta}>\left(de-g+1\right)/3$, then we have 
\[
S'<2^{d-1}\left(d-1\right)\frac{3D_{\beta}+\left(d-2\right)\left(de-g+1\right)}{D_{\beta}-(d-1)g-(d-1)^{2}f(g)},
\]
which our bound for $S$ earlier deals with already. 

If $D_{\beta}=(de-g+1)/3$, then we have 
\begin{align*}
S' & =3\cdot2^{d-1}\left(d-1\right)^{2}\frac{de-g+1}{\left(de-g+1\right)-3(d-1)g-3(d-1)^{2}f(g)}\\
 & =3\cdot2^{d-1}\left(d-1\right)^{2}\left(1+\frac{3(d-1)g+3(d-1)^{2}f(g)}{\left(de-g+1\right)-3(d-1)g-3(d-1)^{2}f(g)}\right).
\end{align*}

Let $T=3(d-1)g+3(d-1)^{2}f(g).$

For $g=0$, we have $S'=3\cdot2^{d-1}\left(d-1\right)^{2}< 2^{d-2}\left(d-1\right)\left(4d^{2}-4d+3\right)$, which is satisfactory.

For $g\ge1$, we have 
\begin{align*}
 & 3\cdot2^{d-1}\left(d-1\right)^{2}\left(1+\frac{3(d-1)g+3(d-1)^{2}f(g)}{\left(de_{0}-g+1\right)-3(d-1)g-3(d-1)^{2}f(g)}\right)\\
 & < 2^{d-2}\left(d-1\right)\left(4d^{2}-4d+3\right)+1
\end{align*}
by plugging in $e_{0}$ using a computer algebra system.

If $D_{\beta}<(de-g+1)/3$, then we have 
\begin{align*}
S' & <2^{d-1}\left(d-1\right)^{2}\frac{de-g+1}{D_{\beta}-(d-1)g-(d-1)^{2}f(g)}\\
 & \le2^{d-1}\left(d-1\right)^{2}\frac{de-g+1}{e-2g+2-(d-1)g-(d-1)^{2}f(g)}\\
 & =2^{d-1}\left(d-1\right)^{2}\left(d-\frac{g-1-2dg+2d-d(d-1)g-d(d-1)^{2}f(g)}{e-2g+2-(d-1)g-(d-1)^{2}f(g)}\right)\\
 & =2^{d-1}\left(d-1\right)^{2}\left(d-\frac{T}{e-2g+2-(d-1)g-(d-1)^{2}f(g)}\right),
\end{align*}

where $T=g-1-2dg+2d-d(d-1)g-d(d-1)^{2}f(g)$. 

For $g=0$, we have $T=-1+2d>0$, so $S'<2^{d-2}\left(d-1\right)\left(4d^{2}-4d+3\right)$. 
% \todo{change}

For $g\ge1$, we have $T<0$. Then, 
\[
S'< 2^{d-2}\left(d-1\right)\left(4d^{2}-4d+3\right)+1
\]
for $e\ge e_{0}$ by using a computer algebra system.

\subparagraph{I.2: $D_{\beta}<D_{\alpha}/2$}

Let us use 
\[
M(\alpha,\beta)\ge m'f(g)+(e-s_{\alpha}-g+1)\left\lceil \frac{m-m'+1}{d-1}\right\rceil .
\]

\subparagraph{I.2.1: $D_{\alpha}\le de/2-g+1$}

We have 
\[
e-s_{\alpha}-g+1=\left\lceil \frac{D_{\alpha}}{d-1}\right\rceil -g,
\]
which by the assumptions on $e$, is clearly at least $2g-1-g+1$.
So 
\begin{align*}
A' & =2^{d-1}\frac{D_{\alpha}+D_{\beta}+m\left(de-g+1\right)}{M(\alpha,\beta)-(m+1)f(g)}\\
 & \le2^{d-1}\frac{D_{\alpha}+D_{\alpha}/2+m\left(de-g+1\right)}{\left(\left\lceil \frac{D_{\alpha}}{d-1}\right\rceil -g\right)\left\lceil \frac{m-m'+1}{d-1}\right\rceil -\left(m-m'+1\right)f(g)}\\
 & \le2^{d-1}\left(d-1\right)\frac{3D_{\alpha}/2+m\left(de-g+1\right)}{\left(D_{\alpha}-(d-1)g\right)\left\lceil \frac{m-m'+1}{d-1}\right\rceil -(d-1)\left(m-m'+1\right)f(g)}.
\end{align*}
Note that this is dominated by case \eqref{eq:asfkawefbuaweff} because
in that case we take $D_{\alpha}$ to be even smaller than allowed
in this case.

\subparagraph{I.2.2: $de/2-g+1<D_{\alpha}\le de/2+1$}

Here, note that we are implicitly assuming $g\ge1$ and $D_{\alpha}>2D_{\beta}$
as well.

We have 
\[
e-s_{\alpha}-g+1=e-\left\lfloor \frac{D_{\alpha}-e+2g-2}{d-1}\right\rfloor -g,
\] which by the assumptions on $e$, is clearly at least $2g-1-g+1$.
So 
\begin{align*}
A' & \le2^{d-2}\frac{3D_{\alpha}+2m\left(de-g+1\right)}{\left(e-\left\lfloor \frac{D_{\alpha}-e+2g-2}{d-1}\right\rfloor -g\right)\left\lceil \frac{m-m'+1}{d-1}\right\rceil -(m-m'+1)f(g)}\\
 & \le2^{d-2}(d-1)\frac{3\left(\frac{de}{2}+1\right)+2m\left(de-g+1\right)}{\left(\frac{de}{2}-g+1-gd\right)\left\lceil \frac{m-m'+1}{d-1}\right\rceil -(d-1)(m-m'+1)f(g)}.
\end{align*}
If $m\le2(d-1)$, then $m-m'+1\le d-1$, so 
\begin{align*}
A' & \le2^{d-2}\left(d-1\right)\frac{3(de/2+1)+4(d-1)\left(de-g+1\right)}{\frac{de}{2}-g+1-gd-(d-1)^{2}f(g)}\\
 & =2^{d-2}\left(d-1\right)\left(3+8(d-1)-\frac{T}{\frac{de}{2}-g+1-gd-(d-1)^{2}f(g)}\right)\\
 & \eqqcolon C,
\end{align*}
where $T=-4+5f(g)-8d^{3}f(g)+d^{2}\left(21f(g)-8g\right)+g+d\left(4-18f(g)+g\right)$. 

For $g\ge1$, we have $T<0$. Plugging in $e\ge e_{0}$, we notice
that 
\[
A'<2^{d-2}\left(d-1\right)\left(4d^{2}-4d+3\right)+1
\]
by using a computer algebra system.

If $m>2(d-1)$, then 
\begin{align*}
A' & \le2^{d-2}(d-1)^{2}\frac{3\left(\frac{de}{2}+1\right)+2m\left(de-g+1\right)}{(m-m'+1)\left(\left(\frac{de}{2}-g+1-gd\right)-(d-1)^{2}f(g)\right)}\\
 & \le2^{d-1}(d-1)^{2}\frac{3\left(\frac{de}{2}+1\right)+2m\left(de-g+1\right)}{m\left(\left(\frac{de}{2}-g+1-gd\right)-(d-1)^{2}f(g)\right)}\\
 & \le2^{d-2}(d-1)\frac{3\left(\frac{de}{2}+1\right)+4(d-1)\left(de-g+1\right)}{\left(\frac{de}{2}-g+1-gd\right)-(d-1)^{2}f(g)}\\
 & =C,
\end{align*}
so we reduce to the $m\le2(d-1)$ case.

\paragraph{Case II: $de/2-g+1<D_{\beta}\le de/2+1$}

Implicitly, we are assuming $g\ge1$. 

Here, let us take the $\beta$ estimate: 
\[
e-s_{\beta}-g+1=e-\left\lfloor \frac{D_{\beta}-e+2g-2}{d-1}\right\rfloor -g,
\] which by the assumptions on $e$, is clearly at least $2g-1-g+1$.
So
\begin{align*}
A' & =2^{d-1}\frac{D_{\alpha}+D_{\beta}+m\left(de-g+1\right)}{M(\alpha,\beta)-(m+1)f(g)}\\
 & \le2^{d-1}\frac{D_{\alpha}+D_{\beta}+m\left(de-g+1\right)}{\left(e-\left\lfloor \frac{D_{\beta}-e+2g-2}{d-1}\right\rfloor -g\right)\left\lceil \frac{m+1}{d-1}\right\rceil -\left(m+1\right)f(g)}\\
 & \le2^{d-1}(d-1)\frac{de+2+m\left(de-g+1\right)}{\left(\frac{de}{2}-g+1-gd\right)\left\lceil \frac{m+1}{d-1}\right\rceil -\left(d-1\right)\left(m+1\right)f(g)}.
\end{align*}
If $m+1\le d-1$, then 
\begin{align*}
A' & \le2^{d-1}\left(d-1\right)\frac{de+2+\left(d-2\right)\left(de-g+1\right)}{\frac{de}{2}-g+1-gd-\left(d-1\right)^{2}f(g)}\\
 & \le2^{d-1}\left(d-1\right)\frac{3(de/2+1)+4(d-1)\left(de-g+1\right)}{\frac{de}{2}-g+1-gd-\left(d-1\right)^{2}f(g)}\\
 & =C,
\end{align*}
so we can reduce to I.2.2.

If $m+1>d-1$, then 
\begin{align*}
A' & \le2^{d-1}(d-1)\frac{de+2+m\left(de-g+1\right)}{\left(\frac{de}{2}-g+1-gd\right)\left\lceil \frac{m+1}{d-1}\right\rceil -\left(d-1\right)\left(m+1\right)f(g)}\\
 & \le2^{d-1}\left(d-1\right)^{2}\frac{\left(de-g+1\right)+\frac{g-1}{m+1}}{\frac{de}{2}-g+1-gd-\left(d-1\right)^{2}f(g)}
\end{align*}
and the right hand side is non-increasing in $m$. Hence, 
\[
A'\le2^{d-1}\left(d-1\right)\frac{de+2+\left(d-2\right)\left(de-g+1\right)}{\frac{de}{2}-g+1-gd-\left(d-1\right)^{2}f(g)}\le C,
\]
so we can again reduce to I.2.2.

\paragraph{Case III: $D_{\beta}\le e-2g+1$}

Since we are working with minor arcs, this implies $D_{\alpha}\ge e-2g+2$. 

\subparagraph{III.1: $de/2-g+1<D_{\alpha}\le de/2+1$}

Note that we are implicitly assuming $g\ge1$. Let us use the $\alpha$
estimate: We have 
\begin{align*}
M(\alpha,\beta) & \ge m'f(g)+\left(e-s_{\alpha}-g+1\right)\left\lceil \frac{m-m'+1}{d-1}\right\rceil \\
 & =m'f(g)+\left(e-\left\lfloor \frac{D_{\alpha}-e+2g-2}{d-1}\right\rfloor -g\right)\left\lceil \frac{m-m'+1}{d-1}\right\rceil.
\end{align*}
Note that $e-s_\alpha \ge 2g-1$ by the assumptions on $e$. We have
\begin{align*}
A' & =2^{d-1}\frac{D_{\alpha}+D_{\beta}+m\left(de-g+1\right)}{M(\alpha,\beta)-(m+1)f(g)}\\
 & \le2^{d-1}\frac{D_{\alpha}+e-2g+1+m\left(de-g+1\right)}{\left(e-\left\lfloor \frac{D_{\alpha}-e+2g-2}{d-1}\right\rfloor -g\right)\left\lceil \frac{m-m'+1}{d-1}\right\rceil -\left(m-m'+1\right)f(g)}\\
 & \le2^{d-1}\left(d-1\right)^{2}\frac{D_{\alpha}+e-2g+1+m\left(de-g+1\right)}{\left(m-m'+1\right)\left(de-D_{\alpha}+2-(d+1)g-(d-1)^{2}f(g)\right)}\\
 & \le2^{d-1}\left(d-1\right)^{2}\frac{\frac{de}{2}+1+e-2g+1+m\left(de-g+1\right)}{\left(m-m'+1\right)\left(\frac{de}{2}+1-(d+1)g-(d-1)^{2}f(g)\right)}.
\end{align*}
If $m\le2(d-1)$, then $m-m'+1\le d-1$, so 
\begin{align*}
A' & \le2^{d-1}\left(d-1\right)\frac{\frac{de}{2}+1+e-2g+1+2\left(d-1\right)\left(de-g+1\right)}{\frac{de}{2}+1-(d+1)g-(d-1)^{2}f(g)}\\
 & =2^{d-1}\left(d-1\right)\left(4d-3+\frac{2}{d}-\frac{T}{\frac{de}{2}+1-(d+1)g-(d-1)^{2}f(g)}\right)\\
 & \eqqcolon C',
\end{align*}
where $T=-3+7f(g)-4d^{3}f(g)+d^{2}\left(11f(g)-4g\right)+g+d\left(2-12f(g)-g\right)-2\left(-1+f(g)+g\right)/d$. 

For $g\ge1$, we have $T<0$. Plugging in $e\ge e_{0}$, we notice
that 
\[
A'<2^{d-2}\left(d-1\right)\left(4d^{2}-4d+3\right)+1
\]
by using a computer algebra system.

If $m>2(d-1)$, then 
\begin{align*}
A' & \le2^{d-1}\left(d-1\right)^{2}\frac{\frac{de}{2}+1+e-2g+1+m\left(de-g+1\right)}{\left(m-m'+1\right)\left(\frac{de}{2}+1-(d+1)g-(d-1)^{2}f(g)\right)}\\
 & \le2^{d}\left(d-1\right)^{2}\frac{\frac{de}{2}+1+e-2g+1+m\left(de-g+1\right)}{m\left(\frac{de}{2}+1-(d+1)g-(d-1)^{2}f(g)\right)}\\
 & \le2^{d}\left(d-1\right)^{2}\frac{\frac{de}{2}+1+e-2g+1+2(d-1)\left(de-g+1\right)}{2(d-1)\left(\frac{de}{2}+1-(d+1)g-(d-1)^{2}f(g)\right)}\\
 & =2^{d-1}\left(d-1\right)\frac{\frac{de}{2}+1+e-2g+1+2(d-1)\left(de-g+1\right)}{\frac{de}{2}+1-(d+1)g-(d-1)^{2}f(g)}\\
 & =C',
\end{align*}
so we can reduce to the $m\le2(d-1)$ case.

\subparagraph{III.2: $e-2g+2\le D_{\alpha}\le de/2-g+1$}

\subparagraph{III.2.1: $D_{\alpha}/2>D_{\beta}$}

First, consider the case where $D_{\alpha}/2>D_{\beta}$, where we
use the $\alpha$ estimate. Then, we have 
\[
M(\alpha,\beta)\ge m'f(g)+\left(e-s_{\alpha}-g+1\right)\left\lceil \frac{m-m'+1}{d-1}\right\rceil =m'f(g)+\left(\left\lceil \frac{D_{\alpha}}{d-1}\right\rceil -g\right)\left\lceil \frac{m-m'+1}{d-1}\right\rceil.
\]
Note that $e-s_\alpha\ge 2g-1$ by the assumptions on $e$.
So 
\begin{align*}
A' & =2^{d-1}\frac{D_{\alpha}+D_{\beta}+m\left(de-g+1\right)}{M(\alpha,\beta)-(m+1)f(g)}\\
 & \le2^{d-1}\frac{D_{\alpha}+D_{\alpha}/2+m\left(de-g+1\right)}{\left(\left\lceil \frac{D_{\alpha}}{d-1}\right\rceil -g\right)\left\lceil \frac{m-m'+1}{d-1}\right\rceil -\left(m-m'+1\right)f(g)}\\
 & \le2^{d-1}\left(d-1\right)\frac{3D_{\alpha}/2+m\left(de-g+1\right)}{\left(D_{\alpha}-(d-1)g\right)\left\lceil \frac{m-m'+1}{d-1}\right\rceil -(d-1)\left(m-m'+1\right)f(g)}.
\end{align*}

If $m\le2\left(d-1\right)$, we obtain 
\begin{equation}
A'\le2^{d-1}\left(d-1\right)\frac{3D_{\alpha}/2+2(d-1)\left(de-g+1\right)}{\left(D_{\alpha}-(d-1)g\right)-(d-1)^{2}f(g)}.\label{eq:asfkawefbuaweff}
\end{equation}
The right hand side is decreasing in $D_{\alpha}$, so it is at most 
\begin{align*}
E & \coloneqq2^{d-1}\left(d-1\right)\frac{3\left(e-2g+2\right)/2+2(d-1)\left(de-g+1\right)}{e-2g+2-(d-1)g-(d-1)^{2}f(g)}\\
 & =2^{d-1}\left(d-1\right)\left(\frac{3}{2}+2d(d-1)+\frac{(d-1)\left(4d^{2}g+\left(4d^{3}-8d^{2}+7d-3\right)f(g)+4d(g-2)-g+4\right)/2}{e-2g+2-(d-1)g-(d-1)^{2}f(g)}\right).
\end{align*}
Let $T=-(d-1)\left(4d^{2}g+\left(4d^{3}-8d^{2}+7d-3\right)f(g)+4d(g-2)-g+4\right)/2,$
so that 
\[
E=2^{d-1}\left(d-1\right)\left(\frac{3}{2}+2d(d-1)-\frac{T}{e-2g+2-(d-1)g-(d-1)^{2}f(g)}\right).
\]
For $g=0$, we have $T=(d-1)(4d-2)>0$ for $d\ge3$. So $A'\le E<2^{d-2}\left(d-1\right)\left(4d^{2}-4d+3\right)$. 

For $g\ge1$, we have $T<0$ for $d\ge3$, and note that 
\[
2^{d-1}\left(d-1\right)\left(\frac{3}{2}+2d(d-1)-\frac{T}{e_{0}-2g+2-(d-1)g-(d-1)^{2}f(g)}\right)<2^{d-2}\left(d-1\right)\left(4d^{2}-4d+3\right)+1
\]
for $e_{0}=2^{d-2}\left(d-1\right)^{2}\left(4d^{2}g+\left(4d^{3}-8d^{2}+7d-3\right)f(g)+4d(g-2)-g+4\right)-1+(d+1)g+(d-1)^{2}f(g).$ 

If $m>2(d-1)$, we obtain 
\begin{align*}
A' & \le2^{d-1}\left(d-1\right)\frac{3D_{\alpha}/2+m\left(de-g+1\right)}{\left(D_{\alpha}-(d-1)g\right)\left\lceil \frac{m-m'+1}{d-1}\right\rceil -(d-1)\left(m-m'+1\right)f(g)}\\
 & \le2^{d-1}\left(d-1\right)^{2}\frac{3D_{\alpha}/2+m\left(de-g+1\right)}{\left(m-m'+1\right)\left(D_{\alpha}-(d-1)g-(d-1)^{2}f(g)\right)}\\
 & \le2^{d-1}\left(d-1\right)^{2}\frac{3D_{\alpha}/2+m\left(de-g+1\right)}{\left(m/2\right)\left(D_{\alpha}-(d-1)g-(d-1)^{2}f(g)\right)}\\
 & =2^{d-1}\left(d-1\right)\frac{3D_{\alpha}\cdot\frac{d-1}{m}+2(d-1)\left(de-g+1\right)}{D_{\alpha}-(d-1)g-(d-1)^{2}f(g)}\\
 & \le2^{d-1}\left(d-1\right)\frac{3D_{\alpha}/2+2(d-1)\left(de-g+1\right)}{\left(D_{\alpha}-(d-1)g\right)-(d-1)^{2}f(g)},
\end{align*}
but this is the same as \eqref{eq:asfkawefbuaweff}.

\subparagraph{III.2.2: $D_{\alpha}/2\le D_{\beta}$}

Next, consider the case where $D_{\alpha}/2\le D_{\beta}$, where
we use the $\beta$ estimate instead. Then, we have
\[
M(\alpha,\beta)\ge\left(e-s_{\beta}-g+1\right)\left\lceil \frac{m+1}{d-1}\right\rceil =\left(\left\lceil \frac{D_{\beta}}{d-1}\right\rceil -g\right)\left\lceil \frac{m+1}{d-1}\right\rceil.
\]
By the assumptions on $e$, we have $e-s_\beta \ge 2g-1$.
so 
\begin{align*}
A' & =2^{d-1}\frac{D_{\alpha}+D_{\beta}+m\left(de-g+1\right)}{M(\alpha,\beta)-(m+1)f(g)}3D_{\beta}\frac{d-1}{m+1}+\frac{m}{m+1}\left(de-g+1\right)\\
 & \le2^{d-1}\frac{3D_{\beta}+m\left(de-g+1\right)}{\left(\left\lceil \frac{D_{\beta}}{d-1}\right\rceil -g\right)\left\lceil \frac{m+1}{d-1}\right\rceil -\left(m+1\right)f(g)}\\
 & \le2^{d-1}\left(d-1\right)\frac{3D_{\beta}+m\left(de-g+1\right)}{\left(D_{\beta}-(d-1)g\right)\left\lceil \frac{m+1}{d-1}\right\rceil -(d-1)\left(m+1\right)f(g)}.
\end{align*}
If $m\le d-2$, we obtain 
\begin{equation}
A'\le2^{d-1}\left(d-1\right)\frac{3D_{\beta}+(d-2)\left(de-g+1\right)}{\left(D_{\beta}-(d-1)g\right)-(d-1)^{2}f(g)}.\label{eq:asdfawe}
\end{equation}
Note that $D_{\beta}\ge D_{\alpha}/2\ge e/2-g+1$. The RHS is decreasing
in $D_{\beta}$, so it is at most 
\begin{align*}
E & \coloneqq2^{d-1}\left(d-1\right)\frac{3\left(e/2-g+1\right)+(d-2)\left(de-g+1\right)}{e/2-g+1-(d-1)g-(d-1)^{2}f(g)}\\
 & =2^{d-1}\left(d-1\right)\frac{3\left(e-2g+2\right)+2(d-2)\left(de-g+1\right)}{e-2g+2-2(d-1)g-2(d-1)^{2}f(g)}\\
 & =2^{d-1}\left(d-1\right)\left(3+2d(d-2)+\frac{2\left(\left(2d^{2}-4d+3\right)\left(d-1\right)^{2}f(g)+2d^{3}g-4d^{2}g-2d^{2}+2dg+5d-g-2\right)}{e-2g+2-2(d-1)g-2(d-1)^{2}f(g)}\right).
\end{align*}
Let $T=-2\left(\left(2d^{2}-4d+3\right)\left(d-1\right)^{2}f(g)+2d^{3}g-4d^{2}g-2d^{2}+2dg+5d-g-2\right),$
so that 
\[
E=2^{d-1}\left(d-1\right)\left(3+2d(d-2)-\frac{T}{e-2g+2-2(d-1)g-2(d-1)^{2}f(g)}\right).
\]
For $g=0$, we have $T=-2\left(-2d^{2}+5d-2\right)>0$ for $d\ge3$.
So $A'\le E<2^{d-1}\left(d-1\right)\left(2d^{2}-4d+3\right)$. 

For $g\ge1$, we have $T<0$ for $d\ge3$. Furthermore, for $e\ge e_{0}$,
we have $E<2^{d-2}\left(d-1\right)\left(4d^{2}-4d+3\right)+1$ by
computer verification.

If $m>d-2$, we obtain 
\begin{align*}
A' & \le2^{d-1}\left(d-1\right)\frac{3D_{\beta}+m\left(de-g+1\right)}{\left(D_{\beta}-(d-1)g\right)\left\lceil \frac{m+1}{d-1}\right\rceil -(d-1)\left(m+1\right)f(g)}\\
 & \le2^{d-1}\left(d-1\right)^{2}\frac{3D_{\beta}+m\left(de-g+1\right)}{\left(m+1\right)\left(D_{\beta}-(d-1)g-(d-1)^{2}f(g)\right)}\\
 & \le2^{d-1}\left(d-1\right)^{2}\frac{\frac{3D_{\beta}}{m+1}+\frac{m}{m+1}\left(de-g+1\right)}{D_{\beta}-(d-1)g-(d-1)^{2}f(g)}.
\end{align*}

For $g=0$ and $d=3$, we have $3D_{\beta}\ge de-g+1$. Thus the right hand side
is decreasing as a function of $m$ and
\begin{align*}
A' & \le2^{d-1}\left(d-1\right)^{2}\frac{\frac{3D_{\beta}}{d-1}+\frac{d-2}{d-1}\left(de-g+1\right)}{D_{\beta}-(d-1)g-(d-1)^{2}f(g)}\\
 & =2^{d-1}\left(d-1\right)\frac{3D_{\beta}+\left(d-2\right)\left(de-g+1\right)}{D_{\alpha}-(d-1)g-(d-1)^{2}f(g)},
\end{align*}
which is precisely \eqref{eq:asdfawe}.

For $g\ge1$ or $d\ge4$, since $D_{\beta}\le e-2g+1$, we have $3D_{\beta}<de-g+1$,
which means that 
\begin{align*}
A' & <2^{d-1}\left(d-1\right)\frac{de-g+1}{D_{\beta}-(d-1)g-(d-1)^{2}f(g)}\\
 & \le2^{d-1}\left(d-1\right)\frac{de-g+1}{\frac{e}{2}-g+1-(d-1)g-(d-1)^{2}f(g)}\\
 & =2^{d-1}\left(d-1\right)\left(2d-\frac{T}{\frac{e}{2}-g+1-(d-1)g-(d-1)^{2}f(g)}\right),
\end{align*}
where $T=-1-2d\left(-1+f(g)\right)-2d^{3}f(g)+d^{2}\left(4f(g)-2g\right)+g.$ 

For $g=0$ and $d\ge4$, we have $T>0$, so $A'<2^{d-1}\left(d-1\right)\left(2d\right)<2^{d-2}\left(d-1\right)\left(4d^{2}-4d+3\right).$ 

For $g=1$ and $d\ge3$, we have $T<0$. For $e\ge e_{0}$, we have
\[
2^{d-1}\left(d-1\right)\left(2d-\frac{T}{\frac{e}{2}-g+1-(d-1)g-(d-1)^{2}f(g)}\right)\le2^{d-2}\left(d-1\right)\left(4d^{2}-4d+3\right)+1
\]
by using a computer algebra system.
\subsubsection{$d=2$ and $g\geq 1$}
Note that in this case $(\alpha,\beta)$ belongs to the minor arcs precisely when $e-2g+2\leq \max(D_\alpha,D_\beta)\leq e+1$. Let $\gamma\in \{\alpha,\beta\}$ and suppose that $e-2g+2\leq D_\gamma\leq e+1$. Then $s_\gamma$ simplifies to 
\begin{equation}\label{Eq: s_gamma.d=2}
s_\gamma =
\begin{cases} 
    2g &\text{if $D_\gamma =e+1$},\\
    2 &\text{if $D_\gamma =e$ and $g=1$},\\
    2g-1 &\text{if $e-2g+2\leq D_\gamma \leq e$ and $g\geq 2$}. 
\end{cases}
\end{equation}
Our assumption that $e>29g-5+14f(g)$ readily implies $e-s_\gamma \geq 2g-1$ in these cases, so that it suffices to show $A'<n+1$. 
\subparagraph{I: $e+2-2g\leq D_\beta \leq e+1$} 
Note that by \eqref{Eq: s_gamma.d=2} we have $M(\alpha,\beta)\geq (e-3g+1)(m+1)$, so that 
\begin{align*}
    A'&\leq 2\frac{D_\alpha+D_\beta+m(2e-g+1)}{    (m+1)(e-3g+1-f(g))}\\
    &\leq 2\frac{2(e+1)+m(2e-g+1)}{(m+1)(e-3g+1-f(g))}.
\end{align*}
The last term is decreasing in $m$ and hence maximal at $m=1$, yielding 
\[
A'\leq \frac{4e+3-g}{e-3g+1-f(g)}.
\]
This is decreasing in $e$ and with a computer algebra system one can check that when evaluated at $e=e_0$ is strictly smaller than 12, which is sufficient. 
\subparagraph{II: $2g \leq D_\beta \leq e+1-2g$}
Note that when $D_\beta<e+2-2g$, then $e-s_\beta+1=D_\beta$ and we still have $e-s_\beta \geq 2g-1$, since $D_\beta \geq 2g$. As $D_\beta \leq e+1-2g$ implies $D_\alpha \geq e+2-2g$, we have 
\[
M(\alpha,\beta)\geq \max(D_\beta(m+1), m'f(g)+(e-3g+1)(m-m'+1))\eqcolon B.
\]
We obtain 
\[
A'\leq 2\frac{D_\alpha+D_\beta +m(2e-g+1)}{B-(m+1)f(g)}.
\]
One can check that the last term is decreasing in $D_\beta$ if $B=D_\beta(m+1)$ and increasing if $B=m'f(g)+(e-2g+1)(m-m'+1)$. Thus it is maximal when the two terms in the definition of $B$ agree, in which case $D_\alpha \leq e+1$ yields
\begin{align*}
    A' &\leq 2\frac{e+1+(m'f(g)+(e-3g+1)(m-m'+1))/(m+1)+m(2e-g+1)   }{(m-m'+1)(e-3g+1-f(g))}\\
    & \leq \frac{2}{e-3g+1-f(g)}\left( \frac{e+1}{m-m'+1}+\frac{f(g)}{m}+\frac{e-3g+1}{m+1}+2(2e-g+1)\right),
\end{align*}
where we used that $m'/((m+1)(m-m'+1))\leq m$. The last term is decreasing in $m$ and hence maximal at $m=1$. Thus 
\begin{align*}
    A'&\leq \frac{2(e+1+f(g))+e-3g+1+2(4e-2g+2)}{e-3g+1-f(g)}\\
    &= \frac{11e+7+12f(g)-7g}{e-3g+1-f(g)}.
\end{align*}
One can check that when evaluated at $e=29g-5+14f(g)$ this is strictly less than 12, which is sufficient. 
\subparagraph{III: $D_\beta <2g$} 
In this case we have $M(\alpha,\beta)\geq m'f(g)+(e-3g+1)(m-m'+1)$. Using $m-m'+1=\lfloor (m+1)/2\rfloor$ and $D_\beta\leq 2g-1$, we get 
\begin{align*}
    A'&\leq 2\frac{2g-1+D_\alpha +m(2e-g+1) }{\lfloor \frac{m+1}{2}\rfloor (e-3g+1-f(g)} \\
    &\leq 2\frac{2g+e+m(2e-g+1)}{\lfloor \frac{m+1}{2}\rfloor (e-3g+1-f(g))}.
\end{align*}
The last expression is decreasing as $m$ runs through even positive integers and increasing for odd values of $m$. Plugging in $m=2$ and evaluating the limit as $m\to \infty$ for odd $m$, we obtain 
\begin{align*}
    A' &\leq \frac{2}{e-3g+1-f(g)}\max(4e-2g+2,5e+2)\\
    &= 2\frac{5e+2}{e-3g+1-f(g)}.
\end{align*}
Evaluating this expression at $e=29g-5+14f(g)$ again shows that $A'<12$, which is sufficient. 

\printbibliography

\end{document}